\documentclass{article}
\usepackage[top=2cm,left=2.5cm,right=3cm,right=2cm]{geometry}
\usepackage[utf8]{inputenc}
\usepackage[english]{babel}
\usepackage{amsmath}
\usepackage{mathtools}
\usepackage{amsfonts}
\usepackage{amssymb}
\usepackage{amsthm}
\usepackage{bbold}
\usepackage{ifthen}
\usepackage{xcolor}
\usepackage[makeroom]{cancel}
\usepackage{graphicx}
\usepackage[square,numbers,sort&compress]{natbib}
\usepackage{tikz}

\usepackage{mathrsfs}

\usepackage{hyperref}
\hypersetup{
        colorlinks=true,
        linkcolor=black,
        citecolor=black,
        urlcolor=black,
}
\usepackage[aligntableaux=center]{ytableau}
\usepackage{float}
\usepackage{multirow}
\usepackage{accents}
\usepackage{tensor}
\usepackage{prodint}

\newcommand{\tr}[2]{\mathrm{tr}^{\vphantom{+}}_{#1#2}}
\newcommand{\ctr}[2]{\mathrm{tr}^{+}_{#1#2}}
\newcommand{\sfa}{\mathsf{a}}
\newcommand{\hsfa}{\mathsf{a}^{\prime}}
\newcommand{\sfb}{\mathsf{b}}
\newcommand{\hsfb}{\mathsf{b}^{\prime}}
\newcommand{\scrA}{\mathscr{A}}
\newcommand{\scrC}{\mathscr{C}}
\newcommand{\scrL}{\mathscr{L}}
\newcommand{\scrR}{\mathscr{R}}
\newcommand{\scrP}{\mathscr{P}}

\newcommand{\Sn}[1]{\mathfrak{S}_{#1}}
\newcommand{\xnone}{\phantom{\times}}

\newcommand{\bref}[1]{{\bf \ref{#1}}}

\newcommand\xfootnote[1]{%
  \begingroup
  \renewcommand\thefootnote{}\footnote{#1}%
  \addtocounter{footnote}{-1}%
  \endgroup
}

\newtheorem{proposition}{Proposition}[section]
\newtheorem{lemma}[proposition]{Lemma}
\newtheorem{corollary}[proposition]{Corollary}
\newtheorem{theorem}[proposition]{Theorem}

\numberwithin{equation}{section}

\begin{document}
\ 
\vspace{0.5 cm}
\begin{center}
{\Large \textbf{Traceless projection of mixed tensor products,\\and walled Brauer algebras
}}

{\small
\vspace{.55cm} 
{\large \textbf{Y.~O. Goncharov$^{1,2}$}\xfootnote{\texttt{yegor.goncharov@idpoisson.fr}}}

\vskip .35cm $^{1}$ ESIEE Paris, Universit\'{e} Gustave Eiffel,\\
28 avenue Andr\'{e}-Marie Amp\`{e}re, 77420 Champs-sur-Marne, France

\vskip .35cm $^{2}$ Institut Denis Poisson,
Universit\'e de Tours, Universit\'e d’Orl\'eans, CNRS,\\
Parc de Grandmont, 37200 Tours, France

}

\end{center}

\begin{abstract}\noindent
Along similar lines as in \cite{traceless}, we describe a self-contained procedure for constructing the traceless projection of mixed tensor products (built out of a finite-dimensional complex vector space and its dual). The  construction relies on the Schur-Weyl duality for the general linear group and regards rational representations thereof. By identifying the traceless subspace as a particular rational representation, the traceless projector which commutes with the group action can be understood as a uniquely defined idempotent in the centraliser algebra. We also identify and construct the analogue of the traceless projector in the walled Brauer algebras when the latter are semisimple. Among possible applications of the traceless projector, we show how the result applies to mixed tensor products built out of a finite-dimensional hermitian space and its complex conjugate.
\end{abstract}

\section{Preliminaries and notations}

\subsection{Introduction}\label{sec:intro}

    \paragraph{Traceless projection of mixed tensors.} Let $V$ be a finite-dimensional complex vector space with $\dim V = N$, and let $V^{*}$ denote the dual space. For any pair of positive integers $m,n$ the mixed tensor product
    \begin{equation}\label{eq:intro_mixed_tensor}
        \underbrace{V\otimes \ldots \otimes V}_{m}\otimes \underbrace{V^{*} \otimes \ldots \otimes V^{*}}_{n}
    \end{equation}
    admits the uniquely defined {\it traceless subspace} whose elements vanish upon applying the canonical contraction to pairs $V$ and $V^{*}$ at any given positions in \eqref{eq:intro_mixed_tensor}. Projection onto the traceless subspace is referred to as traceless projection and constitutes the main focus of the present work.
    \vskip 2pt

    Like any projection in general, projection onto the traceless subspace is defined modulo the choice of a complementary subspace which constitutes the kernel of the corresponding traceless projector. The natural choice of the traceless projector is provided by the invariance of the traceless subspace under the diagonal action of the full linear group $GL(N)$, when an invertible transformation of $V$ is accompanied by the dual (contragredient) transformation of $V^{*}$. In this way, \eqref{eq:intro_mixed_tensor} is a rational representation of $GL(N)$ which is known to be completely reducible \cite{Schur_dissertation}, so the traceless subspace admits the unique $GL(N)$-invariant complement described in Section \bref{sec:traceless_projection}.
    \vskip 2pt

    \paragraph{Schur-Weyl duality.} The group-theoretic background of the problem evokes parallels with \cite{traceless} where the traceless projection of the tensor product $V^{\otimes n}$ was constructed in the case of $V$ equipped with a non-degenerate scalar product. The same key idea applies when $V$ and $V^{*}$ carry no additional structures: along with the subalgebra of transformations $L_{m,n}(N)$ generated by the action of $GL(N)$ in \eqref{eq:intro_mixed_tensor} one also considers its centraliser algebra $C_{m,n}(N)$ constituted by all linear transformations of \eqref{eq:intro_mixed_tensor} which commute with the group action.
    \vskip 2pt

    The algebra $L_{m,n}(N)$ is semisimple, which is an equivalent formulation of the complete reducibility of \eqref{eq:intro_mixed_tensor} under the action of $GL(N)$. As a result, semisimplicity of $C_{m,n}(N)$ follows, as well as that both algebras of endomorphisms are mutual centralisers. These facts underlie the remarkable interplay between the representation theory of $GL(N)$ on one hand and of its centraliser algebra on the other, commonly referred to as {\it Schur-Weyl duality}. In our analysis we exploit the following general features of Schur-Weyl dualities (see, {\it e.g.}, \cite{Stevens_SWd}): let $\mathfrak{A}$ be a semisimple (associative) algebra of linear transformations of a finite-dimensional complex vector space $\mathcal{V}$, and let $\mathfrak{A}^{\prime}$ denote the centraliser algebra, then 
    \begin{itemize}
        \item[1)] The algebra $\mathfrak{A}^{\prime}$ is semisimple and $(\mathfrak{A}^{\prime})^{\prime} = \mathfrak{A}$.
        \item[2)] Equivalence classes of irreducible representations of $\mathfrak{A}$ are in one-to-one correspondence with those of $\mathfrak{A}^{\prime}$. 
        \item[3)] An irreducible representation of $\mathfrak{A}$ (respectively, of $\mathfrak{A}^{\prime}$) occurs in $\mathcal{V}$ with the multiplicity equal to the dimension of the corresponding irreducible representation of $\mathfrak{A}^{\prime}$ (respectively, of $\mathfrak{A}$).
        \item[4)] Irreducible representations of $\mathfrak{A}$ in $\mathcal{V}$ result from applying primitive idempotents of $\mathfrak{A}^{\prime}$.  
    \end{itemize}
    For a detailed review of Schur-Weyl dualities for the classical Lie groups see \cite{GoodmanWallach1998}.
    \vskip 2pt
    
    The primer example of a Schur-Weyl duality regards the action of $GL(N)$ in the tensor product $V^{\otimes n}$. In this case, I.~Schur proved that the centraliser algebra is generated by the action of the enveloping algebra of the permutation group $\mathbb{C}\Sn{n}$ \cite{Schur_dissertation}, while H. Weyl proved that primitive idempotents in $\mathbb{C}\Sn{n}$ project $V^{\otimes n}$ onto irreducible representations of $GL(N)$ \cite{Weyl_1929} (see \cite{Weyl,GoodmanWallach1998} for details). In particular, a possibility to single out irreducible representations of $GL(N)$ in \eqref{eq:intro_mixed_tensor} consists in applying Young symmetrisers.
    \vskip 2pt
    
    In the case when $GL(N)$ acts in the mixed tensor product \eqref{eq:intro_mixed_tensor}, the centraliser algebra $C_{m,n}(N)$ was described in \cite{Koike_89} by means of a generating set of endomorphisms, where a similarity to Brauer algebras \cite{Brauer} was also mentioned. A convenient diagrammatic realisation of $C_{m,n}(N)$ in terms of a particular subalgebra $B_{m,n}(N) \subset B_{m+n}(N)$ in the Brauer algebra, together with its action in \eqref{eq:intro_mixed_tensor}, was given in \cite{BCHLLS} (see Section \bref{sec:wB_algebra}). The latter subalgebras (as well as the Brauer algebras) can be defined irrespectively of their action in tensor-product spaces, in which case one has $B_{m,n}(\delta)$ for any integers $m,n\geqslant 1$ and $\delta \in \mathbb{C}$, referred to as walled Brauer algebras. The latter were independently introduced in \cite{Turaev_1990}.
    \vskip 2pt

    By considering \eqref{eq:intro_mixed_tensor} as a representation of $GL(N)\times GL(N)$, where the left (respectively, right) copy of the group acts independently in the $m$ factors $V$ (respectively, $n$ factors $V^{*}$), one arrives at another example of a Schur-Weyl duality. The corresponding centraliser algebra $S_{m,n}(N)$ is generated by independent permutations of the factors $V$ and $V^{*}$ in \eqref{eq:intro_mixed_tensor}, and thus results from the action of $\mathbb{C}[\Sn{m}\times \Sn{n}]$. 
    \vskip 2pt
    
    The inclusion relations $GL(N) \subset GL(N)\times GL(N)$ (the diagonal subgroup), $\mathbb{C}[\Sn{m}\times \Sn{n}] \subset B_{m,n}(N)$ and finally $S_{m,n}(N) \subset C_{m,n}(N)$ can be summarised via the following {\it see-saw diagram}, reminiscent of those for dual pairs of Lie groups \cite{Kudla_seesaw} which arise in the context of Howe duality (for review, see \cite{Prasad_lecture_1993} and references therein):
    \begin{center}
        \begin{tikzpicture}[baseline=(current bounding box.center)]
            \def \dx {1};
            \def \dy {1.5};
            \node at (0,0) {$B_{m,n}(N)$};
            \node at (2.6*\dx,0) {$C_{m,n}(N)$};
            \node at (2.6*\dx,-\dy) {$S_{m,n}(N)$};
            \node at (8*\dx,0) {$GL(N)\times GL(N)$};
            \node at (7.2*\dx,-\dy) {$GL(N)$};
            \node at (1.3*\dx,0) {$\longrightarrow$};
            \node at (1.36*\dx,0) {$\longrightarrow$};
            \node at (1.3*\dx,-\dy) {$\longrightarrow$};
            \node at (1.36*\dx,-\dy) {$\longrightarrow$};
            \node at (0,-\dy) {$\mathbb{C}[\Sn{m}\times \Sn{n}]$};
            \draw[line width=0.5pt] (0,-0.2*\dy) -- (0,-0.8*\dy);
            \draw[line width=0.5pt] (2.6*\dx,-0.2*\dy) -- (2.6*\dx,-0.8*\dy);
            \draw[line width=0.5pt] (7.2*\dx,-0.2*\dy) -- (7.2*\dx,-0.8*\dy);
            \draw[line width=0.5pt] (3.5*\dx,-0.1*\dy) -- (6.5*\dx,-0.9*\dy);
            \draw[line width=0.5pt] (3.5*\dx,-0.9*\dy) -- (6.5*\dx,-0.1*\dy);
        \end{tikzpicture}
    \end{center}
    Here vertical lines represent inclusions, arrows denote surjective homomorphisms of algebras and diagonal lines mark Schur-Weyl-dual pairs of algebras. The interplay between the two Schur-Weyl dualities allows us to derive the branching rules for the centraliser algebras (in the middle column of the above diagram) from those for the Lie groups (in the right column), see Proposition \bref{prop:multiplicity_SsubB}.
    \vskip 2pt

    Let us note that in physics, Howe duality (and see-saw dual pairs in particular) is known through oscillator representations of the classical Lie groups (for review, see \cite{JouBaMrktMoj_2020} and references therein).
    
    \paragraph{Traceless projector.}  In all the aforementioned Schur-Weyl dualities, projection onto a particular group representation in the tensor product consists in applying a suitable idempotent of the centraliser algebra. Thus, the $GL(N)$-invariant projection of \eqref{eq:intro_mixed_tensor} onto the traceless subspace consists in constructing a particular (uniquely-defined) idempotent
    \begin{equation}\label{eq:intro_traceless_projector}
        \scrP_{m,n} \in C_{m,n}(N).
    \end{equation}

    Up to this point, we admit that the main lines of the construction of the traceless projector may be familiar to specialists in the representation theory of $GL(N)$ and $B_{m,n}(N)$. Furthermore, one may think of a brute-force way of constructing the traceless projector by summing up appropriate primitive idempotents in $B_{m,n}(N)$. However, we choose to renounce going this way, and consider another possibility due to a number of substantial reasons. Firstly, because the sought projector is a central element of $C_{m,n}(N)$ (see Proposition \bref{prop:P_central}), which makes it much simpler to construct compared to primitive idempotents. Secondly, when $N \leqslant m + n - 1$ the algebra $B_{m,n}(N)$ is non-semisimple \cite[Theorem 6.3]{CDDM}, so addressing the representation theory of the centraliser algebra $C_{m,n}(N)$ by means of $B_{m,n}(N)$ leads to unnecessary complications. Finally, from the point of view of numerical applications of the traceless projector, expressing it as a sum of particular primitive idempotents in $C_{m,n}(N)$ can be hardly viewed as an optimal solution of the problem.
    \vskip 2pt
    
    Construction of the traceless projector \eqref{eq:intro_traceless_projector} goes along similar lines as in \cite{traceless}: one identifies a suitable element $\scrA_{m,n} \in C_{m,n}(N)$ which is diagonalisable in \eqref{eq:intro_mixed_tensor} and whose kernel is exactly the traceless subspace (see Lemma \bref{lem:operator_A}). As a result, given $\mathrm{spec}(\scrA_{m,n})$ (the set of eigenvalues of $\scrA_{m,n}$) the traceless projector is the projector onto the kernel of $\scrA_{m,n}$:
    \begin{equation}\label{eq:intro_traceless_projector_factorised}
        \scrP_{m,n} = \prod_{a \in \mathrm{spec}(\scrA_{m,n})\backslash\{0\}} \left(1 - \dfrac{1}{a} \scrA_{m,n}\right).
    \end{equation}

    The interplay between the two Schur-Weyl dualities presented in the above see-saw diagram allows us to determine $\mathrm{spec}(\scrA_{m,n})$ completely, for any given integers $m,n \geqslant 1$ and $N \geqslant 1$ (see Theorem \bref{thm:eigenvalues}). The result essentially follows from the semisimplicity of the related algebras of transformations, with particular details about irreducible representations of the latter derived from the basic knowledge in the representation theory of the symmetric group on one hand, and of the general linear group on the other. Bypassing the usage of the representation theory of the walled Brauer algebra $B_{m,n}(N)$ allows us to construct $\mathrm{spec}(\scrA_{m,n})$ uniformly for all $N \geqslant 1$, which constitutes the main advantage of the present analysis compared to \cite{traceless}.
    \vskip 2pt
    
    \paragraph{Factorised form of the traceless projector.} The factorised form  of the traceless projector \eqref{eq:intro_traceless_projector_factorised} is a feature of the proposed construction due to the possibility of expressing the traceless subspace as a kernel of a single operator $\scrA_{m,n}$. While the particular choice of the latter is important within the present formalism, it is quite likely that other choices of $\scrA_{m,n}$ are possible. Analysis of this question lies beyond the scope of the present work. 
    \vskip 2pt
    
    Let us outline a number of useful features of the factorised form \eqref{eq:intro_traceless_projector_factorised}. First of all, $\scrA_{m,n}$ commutes with the action of $\mathbb{C}[\Sn{m}\times \Sn{n}]$ in \eqref{eq:intro_mixed_tensor}, which manifests the fact that \eqref{eq:intro_traceless_projector} preserves permutation symmetries of contravariant and covariant indices of a tensor. Moreover, when sequentially applying the factors of \eqref{eq:intro_traceless_projector_factorised}, permutation symmetries of a tensor are preserved at each step.
    \vskip 2pt
    
    The factorised form \eqref{eq:intro_traceless_projector_factorised} provides considerable flexibility to the construction. First of all, the right-hand side of \eqref{eq:intro_traceless_projector_factorised} expresses the same traceless projector upon extending $\mathrm{spec}(\scrA_{m,n})$ to any finite subset of $\mathbb{C}$. We give a particular extension $\widetilde{\mathrm{spec}}(\scrA_{m,n})$ which arises from the representation theory of $B_{m,n}(N)$, and provide a sufficient condition for $\widetilde{\mathrm{spec}}(\scrA_{m,n}) =  \mathrm{spec}(\scrA_{m,n})$ (see Proposition \bref{prop:spec_A_alternative}). In particular, this happens for $N \geqslant m + n - 1$ when the algebra $B_{m,n}(N)$ is semisimple \cite{CDDM}. The necessary condition for the two sets to coincide does not manifest itself among the simplest examples, and rests unknown to the author.
    \vskip 2pt

    Flexibility of the factorised form of the traceless projector is also due to the possibility of reducing the number of factors in \eqref{eq:intro_traceless_projector_factorised} when it is applied to a tensor in an irreducible representation of $GL(N)\times GL(N)$. In particular, one can think of tensors which result from Young symmetrisations of contravariant and covariant indices. Depending on the equivalence class of a $GL(N)\times GL(N)$-representation, we describe the smallest subset of $\mathrm{spec}(\scrA_{m,n})$ to be used in \eqref{eq:intro_traceless_projector_factorised}, sufficient for constructing the traceless projection. This result is summarised in Theorem \bref{thm:eigenvalues_restricted}.

    \paragraph{Splitting idempotent.}  The traceless projector \eqref{eq:intro_traceless_projector} can be equivalently identified as a central idempotent in $C_{m,n}(N)$ which splits the centraliser algebra into two complementary ideals, one of which is the annihilator ideal of the traceless subspace. By denoting the latter ideal $\mathcal{J} \subseteq C_{m,n}(N)$ one has the following exact sequence of algebras:
    \begin{equation}\label{eq:intro_exact_sequence_C}
        0 \xrightarrow{\phantom{m}} \mathcal{J} \xrightarrow{\phantom{m}} C_{m,n}(N) \xrightarrow{\phantom{m}} C_{m,n}(N)\slash \mathcal{J} \xrightarrow{\phantom{m}} 0.
    \end{equation}
    
    The above sequence is split exact. Indeed, because the algebra $C_{m,n}(N)$ is semisimple ({\it i.e.} isomorphic to a direct sum of full matrix algebras),  $\mathcal{J}$ can be complemented by another ideal $\mathcal{I} \subset C_{m,n}(N)$ such that
    \begin{equation}\label{eq:intro_decomposition_C}
        C_{m,n}(N) = \mathcal{I} \oplus \mathcal{J}\quad \text{(direct sum of algebras).}
    \end{equation}
    The two ideals result from multiplying the elements of $C_{m,n}(N)$ by the uniquely-defined central idempotents which form a decomposition of unity in $C_{m,n}(N)$. We refer to the central idempotent which projects $C_{m,n}(N)$ onto $\mathcal{I}$ as {\it splitting idempotent} of \eqref{eq:intro_exact_sequence_C} and show that it coincides with the traceless projector \eqref{eq:intro_traceless_projector}, so that
    \begin{equation}\label{eq:intro_split_ideal}
        \mathcal{I} = \scrP_{m,n} C_{m,n}(N).
    \end{equation}
    This result is formulated in Theorem \bref{thm:splitting_idempotent_C}.
    \vskip 2pt
    
    The relation \eqref{eq:intro_split_ideal} shows that $\mathcal{I} \cong S_{m,n}(N)$, while $S_{m,n}(N) \cong \mathbb{C}[\Sn{m}\times\Sn{n}]$ if and only if $N \geqslant m + n$ (see Proposition \bref{prop:image_Sn}). The `if' part of the latter assertion follows from the isomorphism between $C_{m,n}(N)$ and $B_{m,n}(N)$ \cite[Theorem 5.8]{BCHLLS}, in view of the well-known fact \eqref{eq:wB_direct_sum_SJ} for the walled Brauer algebras. At the same time, the author is unaware of the `only if' implication in the literature, so a simple proof thereof based on the analysis of the traceless projection is proposed. In a similar fashion, we give a proof that the homomorphism from $B_{m,n}(N)$ onto $C_{m,n}(N)$ is injective only if $N \geqslant m + n$.
    \vskip 2pt
        
    Viewing the traceless projector as a splitting idempotent (along similar lines as discussed in \cite{KMP_central_idempotents}) we define the analogue of the traceless projector $P_{m,n}\in B_{m,n}(\delta)$ in the walled Brauer algebra for all but a finite set of $\delta \in \mathbb{C}$, when $B_{m,n}(\delta)$ is semisimple \cite[Theorem 6.3]{CDDM} (see Theorem \bref{thm:wB_splitting_idempotent}). The role of the annihilator ideal of the traceless subspace is played by the ideal $J \subset B_{m,n}(\delta)$ spanned by the diagrams with at least one arc (see Section \bref{sec:wB_algebra}). In the case when $\delta = N \in \mathbb{N}$ such that $N \geqslant m + n - 1$, the action of $P_{m,n}$ in \eqref{eq:intro_mixed_tensor} coincides with that of the traceless projector \eqref{eq:intro_traceless_projector}. Let us note that constructing the analogue of $P_{m,n}$ in the Brauer algebra was the starting point of the construction of the traceless projector in \cite{traceless} (when the vector space is equipped with a non-degenerate scalar product): by doing so one obtains a self-contained factorised formula for the traceless projector, with a leftover uncertainty about whether certain factors can be omitted when the dimension of the vector space is small.
    
    \paragraph{Organisation of the paper.} Further presentation goes as follows. Two more subsections complete the introduction: in Section \bref{sec:intro_applications} we outline possible domains of application of the traceless projector for mixed tensor products, and in Section \bref{sec:intro_notations} we fix notations for Young diagrams and recall some basic features of rational characters of $GL(N)$ which are utilised in the sequel.
    \vskip 2pt
    
    In Section \bref{sec:traceless_projection} we identify the traceless subspace and its complement in the mixed tensor product \eqref{eq:intro_mixed_tensor}, introduce the endomorphism $\scrA_{m,n}$ and give a self-contained procedure for computing its eigenvalues. In Section \bref{sec:traceless_rep_theory} we explain the construction by providing the relevant details about group actions, centraliser algebras and their representations. Branching rules from $C_{m,n}(N)$ to $S_{m,n}(N)$, and particular advantages of the factorised form of the traceless projector are discussed in Section \bref{sec:projector_further}.
    \vskip 2pt

    In Section \bref{sec:splitting_idempotent_C} we identify the traceless projector as a particular splitting idempotent. The analogue of the traceless projector for walled Brauer algebras is constructed in Section \bref{sec:wB_algebra}.

    \subsection{Possible applications and developments.}\label{sec:intro_applications}

    Unlike the usual situation within applications of tensors in physics (say, in particle physics, gravity and elasticity), the notion of trace considered in the present work is not related to a metric. Another feature of the construction is that it essentially relies on complex vector spaces. As a result, possible applications of the traceless projector \eqref{eq:intro_traceless_projector} may be not apparent from the first glance and thus merit a discussion.  

    \paragraph{Real vector spaces.} Despite the proposed construction regards complex vector spaces, the resulting traceless projector \eqref{eq:intro_traceless_projector} equally applies in the case of finite-dimensional vector fields over $\mathbb{Q}$. This follows directly from the fact that $\mathrm{spec}(\scrA_{m,n})\backslash\{0\} \subset \mathbb{N}$ (see Theorem \bref{thm:eigenvalues} and recall \eqref{eq:intro_traceless_projector_factorised}). As a result, the traceless projector \eqref{eq:intro_traceless_projector_factorised} applies in the case of vector spaces over $\mathbb{R}$. More generally, one can consider any field $K$ such that $\mathbb{Q} \subseteq K$.
    
    \paragraph{Unitary groups $U(p,q)$.} Let us show how the traceless projector \eqref{eq:intro_traceless_projector} can be applied when the group $GL(N)$ is replaced by any of its real forms $U(p,q)$ with $p+q = N$, in which case for any $m,n \geqslant 1$ one considers mixed tensor products
    \begin{equation}\label{eq:intro_mixed_conjugate}
       \underbrace{V\otimes \ldots \otimes V}_{m}\otimes \underbrace{\bar{V} \otimes \ldots \otimes \bar{V}}_{n}
    \end{equation}
    built out of a $N$-dimensional complex vector space $V$ and its complex conjugate $\bar{V}$. The space $V$ is canonically equipped with a non-degenerate hermitian form $(\cdot,\cdot)$ which provides a $U(p,q)$-invariant isomorphism between the conjugate $\bar{V}$ and the dual $V^{*}$ spaces:
    \begin{equation}\label{eq:intro_iso_VVbar}
    \begin{array}{rccc}
        h : & \bar{V} & \rightarrow & V^{*} \\
        \hfill & v & \mapsto & (v,\cdot\,)
    \end{array}
    \end{equation} 
    At the same time, no natural isomorphism between $V$ and $\bar{V}$ (and thus between $V$ and $V^{*}$) is around. 
    \vskip 2pt
    
    The isomorphism \eqref{eq:intro_iso_VVbar} allows one to apply the traceless projector \eqref{eq:intro_traceless_projector} to the mixed tensor product \eqref{eq:intro_mixed_conjugate} as follows (see Section \bref{sec:traceless_projection} for details). Given a basis $\{e_{i}\}$ in $V$, consider the non-degenerate hermitian matrix $g_{ij} = (e_{i},e_{j})$ (such that $g_{ij} = \bar{g}_{ji}$, where $\bar{z}$ denotes the complex conjugate of $z\in\mathbb{C}$) and its inverse $g^{ij}$ such that $g^{ik}g_{kj} = \delta^{i}_{j}$. Unitary transformations preserve the hermitian form, so any $S \in U(p,q)$ is represented by a non-degenerate matrix subject to the following condition:
    \begin{equation}\label{eq:unitary_matrix}
        g_{kl}S^{k}{}_{i} S^{l}{}_{j} = g_{ij}.
    \end{equation}

    Consider the basis $\{\bar{e}_{i}\}$ in $\bar{V}$ such that 
    \begin{equation}
        S(e_{i}) = e_{j} \,S^{j}{}_{i}\quad \Leftrightarrow \quad S(\bar{e}_{i}) = \bar{e}_{j} \,\bar{S}^{j}{}_{i},
    \end{equation}
    and consider the following change of basis in $\bar{V}$:
    \begin{equation}\label{eq:intro_basis_dual}
        e^{i} = g^{ij}\bar{e}_{j} \quad \Leftrightarrow \quad \bar{e}_{i} = g_{ij} e^{j}.
    \end{equation}
    As a matter of a routine exercise one checks that \eqref{eq:unitary_matrix} assures the contragredient transformation law for the new basis, see \eqref{eq:action_basis}:
    \begin{equation}
        S(e^{i}) = (S^{-1})^{i}{}_{j}\, e^{j},
    \end{equation}
    so that the isomorphism \eqref{eq:intro_iso_VVbar} is realised manfestly. As a result, one reads off the action of \eqref{eq:intro_traceless_projector} in \eqref{eq:intro_mixed_conjugate} via the following change of basis in \eqref{eq:intro_mixed_conjugate} induced by \eqref{eq:intro_basis_dual}:
    \begin{equation}
        e_{i_{1}} \otimes \ldots \otimes e_{i_{m}}\otimes \bar{e}_{j_{1}} \otimes \ldots \otimes \bar{e}_{j_{n}} = g_{j_{1}k_{1}}\ldots g_{j_{n}k_{n}} \, e_{i_{1}} \otimes \ldots \otimes e_{i_{m}}\otimes e^{k_{1}} \otimes \ldots \otimes e^{k_{n}},
    \end{equation}
    where the action on the basis vectors $e_{i_{1}} \otimes \ldots \otimes e_{i_{m}}\otimes e^{k_{1}} \otimes \ldots \otimes e^{k_{n}}$ is described in Section \bref{sec:traceless_projection}.
    \vskip 2pt
    
    Another convenient possibility regards a minor modification of the rule given in Section \bref{sec:wB_algebra} for the action of the walled Brauer algebra on the components of a tensor
    \begin{equation}
        T = t^{i_{1}\ldots i_{m} | j_{1} \ldots j_{n}}\, e_{i_{1}} \otimes \ldots \otimes e_{i_{m}}\otimes \bar{e}_{j_{1}} \otimes \ldots \otimes \bar{e}_{j_{n}}.
    \end{equation}
    Namely, that an arc with endpoints $1 \leqslant \sfa \leqslant m$ and $1 \leqslant \hsfb \leqslant n$ in the upper (respectively, lower) row of a walled diagram encodes contraction with $g_{i_{\sfa}j_{\hsfb}}$ (respectively, multiplication by $\bar{g}^{i_{\sfa}j_{\hsfb}}$).

    \paragraph{Non-semisimple subgroups of $GL(N)$.}  The general situation where mixed tensor products arise regards vector spaces endowed with a degenerate symmetric metric, which does not provide an isomorphism between the vector space and its dual. This is the case, for example, when the $N$-dimensional vector space $V$ realises the faithful indecomposable representation of the Euclidean group $O(N-1)\ltimes \mathbb{C}^{N-1} \subset GL(N)$. Note that the Euclidean group contains the Poincar\'e group as a particular real form, in which case the representation in question arises in the Cartan formulation of gravity (see \cite{FrancoisRavera_CartanGeom} and references therein). For more examples of a kind one has the affine group $GL(N-1)\ltimes \mathbb{C}^{N-1}$, as well as the Galilei and Carroll groups \cite{LevyLeblond_Galilei_63,LevyLeblond_65,Gupta_Carroll} which arise in the context of the studies of asymptotic symmetries and gravitational waves \cite{Duval_ConfCarrol_2014,Duval_Caroll_grav_waves}, as well as in higher-spin field theories (see for example \cite{CampPek_flatHS_2021,BekPek_2023}).
    \vskip 2pt
    
    The aforementioned groups are non-semisimple, so ``minimal'' faithful representations thereof are indecomposable but not irreducible. As a result, the role of the centraliser algebra, and thus the notion of Schur-Weyl duality, needs to be clarified (recall that in the examples of Schur-Weyl dualities listed in Section \bref{sec:intro}, the group action and its centraliser algebra are both semisimple). On the other hand, for the study of tensor products \eqref{eq:intro_mixed_tensor} of faithful indecomposable representations of any of the above groups $H \subset GL(N)$ (for an appropriate integer $N$) one can start with decomposing \eqref{eq:intro_mixed_tensor} into irreducible representations of $GL(N)$, which become reducible upon restriction to the subgroup $H$. In this respect, since the centraliser algebra of the action of $H$ in \eqref{eq:intro_mixed_tensor} contains $C_{m,n}(N)$ as a subalgebra, the traceless projection of any $GL(N)$-invariant subspace of \eqref{eq:intro_mixed_tensor} is $H$-invariant. Further decomposition of the latter into indecomposable summands can be analysed by other methods. Let us mention in this respect that the groups in question result from contractions of simple Lie groups \cite{Inonu-Wigner}.
    \vskip 2pt
    
    \paragraph{Manifolds with an affine connection.} Consider a smooth manifold $M$ (with $\dim M = d$) endowed with an affine connection $\nabla$ and a degenerate metric. Geometries of this type arise, for example, in the covariant description of non-relativistic spacetimes \cite{Kunzle_1972,Duval_Kunzle_newtonian_1977}, as well as in the geometry of paths \cite{VeblenThomas_paths} (where the absence of metric can be understood as if it were trivial). In this case, one has the two canonical tensor fields on $M$ (sections of the bundle $\mathcal{T}M$): the torsion $T$ and the Riemann tensor $R$, both being point-wise mixed tensors of type $(1,2)$ and $(1,3)$ respectively. In order to write down the components of the latter, let $\{x^{i}\}$ (with $i = 1,\ldots,d$) be the set of local coordinates on an open patch of $M$ and denote $\Gamma^{i}{}_{jk}$ the the Christoffel symbols (the components of $\nabla$ in the coordinate basis). Then one has
    \begin{equation}\label{eq:intro_TR}
        T^{i}{}_{jk} = \Gamma^{i}{}_{jk} - \Gamma^{i}{}_{kj} \quad \text{and}\quad R^{i}{}_{j,kl} = \dfrac{\partial \Gamma^{i}{}_{lj}}{\partial x^{k}} - \dfrac{\partial \Gamma^{i}{}_{kj}}{\partial x^{l}} + \Gamma^{i}{}_{kr}\Gamma^{r}{}_{lj} - \Gamma^{i}{}_{lr}\Gamma^{r}{}_{kj}
    \end{equation}
    with $T^{i}{}_{jk} = - T^{i}{}_{kj}$ and $R^{i}{}_{j,kl} = - R^{i}{}_{j,lk}$.
    \vskip 2pt

    Given a point $p\in M$, diffeomorphisms of $M$ which stabilise $p$ induce $GL(d)$-transformations of $\mathcal{T}_{p}M$, so one can think of decomposing the fibers of $\mathcal{T}M$ into irreducible components. In this respect, the traceless projection furnishes a particular direct sum of irreducibles, while further projection onto a particular irreducible component is achieved via (anti-)symmetrisations of tensor indices (in the case of mixed tensors see \cite[Theorem 1.1]{Koike_89}). The traceless projection of tensors with specific permutation symmetries is analysed in Section \bref{sec:projector_further}, where it is shown that the traceless projections of $T$ and $R$ for $d \geqslant 3$ is achieved by applying the operators
    \begin{equation}
        \left(1 - \dfrac{1}{d-1} \scrA_{1,3}\right)\quad \text{and}\quad \left(1 - \dfrac{1}{d+1} \scrA_{1,3}\right)\left(1 - \dfrac{1}{d-1} \scrA_{1,3}\right)\left(1 - \dfrac{1}{d-2} \scrA_{1,3}\right)
    \end{equation}
    respectively (for explanation see \eqref{eq:traseless_projector_torsion} and \eqref{eq:traceless_projector_Riemann}). By applying the above operators to the components \eqref{eq:intro_TR} one reads off the following traceless projections:
    \begin{equation}
        (\scrP_{1,2}T)^{i}{}_{jk} = T^{i}{}_{jk} - \dfrac{1}{d-1} \big(\delta^{i}_{j}\, T^{p}{}_{pk} - \delta^{i}_{k}\, T^{p}{}_{pj}\big)
    \end{equation}
    for the torsion (see \eqref{eq:traceless_projector_21_a}), and
    \begin{equation}
    \def\arraystretch{2}
        \begin{array}{rl}
            (\scrP_{1,3}R)^{i}{}_{j,kl} = &  R^{i}{}_{j,kl} - \dfrac{d-1}{(d+1)(d-2)} \,\delta^{i}_{j}\, R^{p}{}_{p,kl} - \dfrac{d^2 - d - 1}{(d+1)(d-1)(d-2)} \, \big(\delta^{i}_{k} \,R^{p}{}_{j,pl} - \delta^{i}_{l} \,R^{p}{}_{j,pk}\big)\\
            \hfill & + \dfrac{1}{(d+1)(d-2)}\, \big(\delta^{i}_{j}\, R^{p}{}_{k,pl} - \delta^{i}_{j}\, R^{p}{}_{l,pk} + \delta^{i}_{k} \,R^{p}{}_{p,lj} - \delta^{i}_{l} \,R^{p}{}_{p,kj}\big)\\
            \hfill & + \dfrac{1}{(d+1)(d-1)(d-2)}\,\big(\delta^{i}_{k}\,R^{p}{}_{l,pj} - \delta^{i}_{l}\,R^{p}{}_{k,pj}\big)
        \end{array}
    \end{equation}
    for the Riemann tensor (see \eqref{eq:traceless_projector_Riemann}).
    \vskip 2pt
    
    \paragraph{Trace decomposition of mixed tensors.} Among possible developments of the proposed formalism, an interesting related problem consists in constructing the complete set of central idempotents which decompose the mixed tensor-product space as the traceless, doubly-traceless, {\it etc.} subspaces complemented by the trace subspace, along similar lines as in \cite{Helpin}.

\subsection{Notations.}\label{sec:intro_notations}

    \paragraph{Partitions and Young diagrams.} Given a positive integer $s$, a {\it partition} of $s$ is a sequence of positive integers $\alpha = (\alpha_{1},\ldots,\alpha_{p})$ such that  $\alpha_{1} \geqslant \alpha_{2} \geqslant \ldots \geqslant \alpha_{p} > 0$ and $\alpha_{1} + \ldots + \alpha_{p} = s$. Each entry $\alpha_{i}$ is referred to as {\it part}, while the number of parts is referred to as {\it length} of a partition. One writes $|\lambda| = s$ and $\ell(\alpha) = p$. Extending the definition of partitions to $s = 0$ gives the {\it empty partition} $\varnothing$ with $\ell(\varnothing) = 0$. Denote $\mathcal{P}$ the set of partitions (including the empty partition) and $\mathcal{P}_{s} \subset \mathcal{P}$ the set of partitions of $s \geqslant 0$. For any subset $X \subseteq \mathcal{P}$, for any $N\geqslant 1$ one writes $X(N)$ to denote the set of elements of $X$ with at most $N$ parts. For pairs of partition we write $\mathcal{P}_{r,s} = \mathcal{P}_{r}\times \mathcal{P}_{s}$ and $\mathcal{P}_{r,s}(N) = \mathcal{P}_{r}(N)\times \mathcal{P}_{s}(N)$.
    \vskip 2pt
    
    For further convenience, given a partition $\alpha \in \mathcal{P}$ one defines $\alpha_{i}$ for any $i \in \mathbb{N}$ by setting $\alpha_{i} = 0$ for all $i > \ell(\alpha)$. With this convention at hand, for any $\alpha,\beta\in \mathcal{P}$ one defines $\alpha + \beta$ to be the partition with parts $\alpha_{i} + \beta_{i}$ for all $i\in\mathbb{N}$.
    \vskip 2pt

    Partitions admit a convenient graphical representation by Young diagrams. A non-empty partition $(\alpha_{1},\ldots,\alpha_{p})$ is represented by a left-justified array of $p$ rows of boxes, with $\alpha_{i}$ boxes in the $i$th row (we use the convention where $i$ increases downwards). With a slight abuse of notation, in the sequel we make no difference between partitions and Young diagrams. For example the set of partitions of $5$ with at most $3$ parts can be equally written as
    \begin{equation}
        \mathcal{P}_{5}(3) = \big\{(2_{2},1),(3,1_{2}),(3,2),(4,1),(5)\big\} \quad \text{or} \quad \mathcal{P}_{5}(3) = \big\{\ytableausetup{boxsize=7pt}\ydiagram{2,2,1},\, \ydiagram{3,1,1},\, \ydiagram{3,2},\, \ydiagram{4,1},\, \ydiagram{5}\big\}.
    \end{equation}
    Here and in what follows we make use of the shorthand notation where instead of $k$ equal parts $l > 0$ one writes $l_{k}$, so that in the above example one has $(2_{2},1) = (2,2,1)$ and $(3,1_{2}) = (3,1,1)$. 
    \vskip 2pt
    
    It is convenient to view Young diagrams as subsets of $\mathbb{N}^{2}$ and to write $(i,j) \in \alpha$ iff $1 \leqslant i \leqslant \ell(\alpha)$ and $1 \leqslant j \leqslant \alpha_{i}$. For any partition $\alpha\in\mathcal{P}$ one defines the dual partition $\alpha^{\prime}\in \mathcal{P}$ whose Young diagram is the set of all $(i,j)$ such that $(j,i)\in \alpha$. In words, the Young diagrams of $\alpha$ and $\alpha^{\prime}$ are related by transposition with respect to the main diagonal. 
    \vskip 2pt 
    
    Representing partitions as subsets of $\mathbb{N}^{2}$ also allows one to extend the following set-theoretic notions to partitions: given two partitions $\alpha,\beta\in \mathcal{P}$, intersection $\alpha\cap \beta\in \mathcal{P}$ and inclusion $\beta \subseteq \alpha$ are understood in terms of the corresponding subsets of $\mathbb{N}^{2}$.
    \vskip 2pt

    For two partitions $\alpha,\beta\in\mathcal{P}$ such that $\beta\subseteq \alpha$ one defines the {\it skew-shape} $\alpha\slash\beta$ represented by the Young-diagram $\alpha$ with the elements in $\beta$ crossed out, for example
    \begin{equation}
    \ytableausetup{boxsize=7pt}
	\alpha = \ydiagram{4,3,1,1}\;\;\text{and}\;\; \beta = \ydiagram{2,1,1}\quad\text{give}\quad\alpha\slash\beta = \ytableaushort{\times\times\xnone\xnone,\times\xnone\xnone,\times,\xnone}
    \end{equation}

    Given a non-empty partition $\alpha\in \mathcal{P}$, for any box $(i,j)\in \mathcal{P}$ one defines its content $c(i,j) = j - i$. The content of a partition is then
    \begin{equation}
        c(\alpha) = \sum_{(i,j)\in \alpha} c(i,j).
    \end{equation}
    In a natural way, the content of a skew-shape $\alpha\slash\beta$ is defined as $c(\alpha\slash\beta) = c(\alpha) - c(\beta)$.
    \vskip 2pt

    \paragraph{Characters of rational irreducible representations of $GL(N)$.} Given an integer $N \geqslant 1$ denote $\Lambda(N)\subset\mathcal{P}(N)^2$ the set of pairs of partitions $(\mu,\nu)$ such that $\ell(\mu) + \ell(\nu) \leqslant N$. The set $\Lambda(N)$ indexes all rational irreducible representations of $GL(N)$. In particular, the latter set indexes irreducible characters $s_{(\mu,\nu)}(x_1,\ldots,x_{N})$ of the rational representations of $GL(N)$ given by {\it rational Schur functions} (see \cite{Stembridge_87} and \cite[Proposition 2.7]{Koike_89}). Note that polynomial representations are rational representations with $\nu = \varnothing$, in which case the characters are Schur polynomials:
    \begin{equation}
        s_{(\mu,\varnothing)}(x_1,\ldots,x_{N}) = s_{\mu}(x_1,\ldots,x_{N}).
    \end{equation}
    The dual of the polynomial representation $(\mu,\varnothing)$ is the rational representation $(\varnothing,\mu)$ whose irreducible character is given by the following rational function:
    \begin{equation}
        s_{(\varnothing,\mu)}(x_1,\ldots,x_{N}) = s_{\mu}(x_1^{-1},\ldots,x_{N}^{-1}).
    \end{equation}
    More generally, rational representations $(\mu,\nu)$ and $(\nu,\mu)$ are dual.
    \vskip 2pt
    
    Any irreducible rational character is an irreducible polynomial character times a non-positive integer power of the determinant: for any $(\mu,\nu)\in \Lambda(N)$ there exists $\alpha\in \mathcal{P}(N)$ and $t\in \mathbb{N}_{0}$ such that
    \begin{equation}\label{eq:rational_polynomial_Schur}
        s_{(\mu,\nu)}(x_{1},\ldots,x_{N}) = \dfrac{1}{(x_{1}\ldots x_{N})^{t}} \, s_{\alpha}(x_{1},\ldots,x_{N}).
    \end{equation}    
    The choice of $\alpha$ and $t$ is not unique: the character rests intact upon passing to $t + s$ and $\alpha + (s_{N})$ for any $s\in \mathbb{N}_{0}$ (the latter transformation implies multiplying and dividing the character by $\det^{s}$, see also \cite[pp. 81--82]{Stembridge_87}). This suggests an alternative way of indexing rational irreducible representations of $GL(N)$ in terms of equivalence classes in $\mathcal{P}(N)\times\mathbb{N}_{0}$ defined as follows: two elements $(\alpha,t), (\beta,u)\in \mathcal{P}(N)\times\mathbb{N}_{0}$ are equivalent if and only if
    \begin{equation}
        \text{for all $i = 1,\ldots,N$,}\quad t - u = \alpha_{i} - \beta_{i}.
    \end{equation}
    Denote $\mathcal{S}(N)$ the so-defined set of equivalence classes and write $[\alpha,t]$ for the equivalence class with the representative $(\alpha,t)$. For example, for $N = 3$ and any integer $t\geqslant 0$ one has
    \begin{equation}
        \ytableausetup{boxsize=6pt} [\ydiagram{2},t] = \big\{(\ydiagram{2},t), (\ydiagram{3,1,1},t+1), (\ydiagram{4,2,2},t+2), \ldots \big\}.
    \end{equation}
    Given an equivalence class, we refer to the representative $(\alpha_{\min},t_{\min})$ with the minimal value of $t = t_{\min}$ as {\it minimal representative}. One has $\ell(\alpha_{\min}) < N$ whenever $t_{\min} > 0$. To see this note that if $t > t_{\min}$ then passing from $t$ to $t-1$ within an equivalence class amounts to omitting the leftmost column of height $N$ in $\alpha$.
    \vskip 2pt
    
    In order to link $\mathcal{S}(N)$ to $\Lambda(N)$, for any non-empty $\beta\in \mathcal{P}(N)$ with $\ell(\beta^{\prime}) = q$ define
    \begin{equation}\label{eq:map_bar}
        \bar{\beta} = (N - \beta^{\prime}_{q},\ldots,N - \beta^{\prime}_{1})^{\prime},\quad \text{otherwise set}\quad \bar{\varnothing} = \varnothing.
    \end{equation}
    In words, the Young diagram $\bar{\beta}$ is the set-theoretic difference $(q_{N})\backslash \beta$ rotated by $180^{\circ}$. The above map is utilised only in the context where $N$ is fixed, so the absence of $N$ in the notation does not lead to confusion. Finally, consider the following map:
    \begin{equation}\label{eq:map_staircase}
    \def\arraystretch{1.4}
        \begin{array}{rccc}
            \mathfrak{s} : & \Lambda(N) & \rightarrow & \mathcal{S}(N) \\
            \hfill & (\mu,\nu) & \mapsto & [\mu + \bar{\nu},\nu_{1}]
        \end{array}
    \end{equation}
    Note that $(\mu + \bar{\nu},\nu_{1})$ is the minimal representative. Indeed, if $\nu \neq \varnothing$ then $\ell(\mu + \bar{\nu}) \leqslant N-1$, while otherwise one has $\nu_{1} = 0$.
    \begin{lemma}\label{lem:bijection_staircase}
        The map \eqref{eq:map_staircase} is a bijection. 
    \end{lemma}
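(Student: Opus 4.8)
The plan is to exhibit an explicit two-sided inverse of $\mathfrak{s}$, since both $\Lambda(N)$ and $\mathcal{S}(N)$ are naturally described by concrete combinatorial data. First I would pin down the structure of an arbitrary equivalence class $[\alpha,t]\in\mathcal{S}(N)$ by using its minimal representative $(\alpha_{\min},t_{\min})$: as noted in the excerpt, if $t_{\min}>0$ then $\ell(\alpha_{\min})<N$, and reducing $t$ within a class corresponds to deleting a full column of height $N$. So the data of a class amounts to a partition $\alpha_{\min}\in\mathcal{P}(N)$ together with an integer $t_{\min}\geqslant 0$ satisfying the compatibility constraint $t_{\min}=0$ or $\ell(\alpha_{\min})\leqslant N-1$.

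Next I would define the candidate inverse $\mathfrak{t}:\mathcal{S}(N)\to\Lambda(N)$ on a minimal representative $(\alpha_{\min},t_{\min})$ by $\nu:=(t_{\min}^{\,\ell})$-type data read off from the last $t_{\min}$-many... more precisely, set $\nu$ to be the partition whose conjugate is obtained from the bottom of $\alpha_{\min}$, mirroring the construction \eqref{eq:map_bar}: one inverts $\bar{\ \cdot\ }$ by taking, inside the $N\times t_{\min}$ rectangle, the $180^\circ$-rotated complement of the part of $\alpha_{\min}$ lying in its last $t_{\min}$ columns, and sets $\mu$ to be the first $N-\ell(\nu)$ rows' worth of the remaining cells. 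The key algebraic facts to verify are: (i) $\nu_1=t_{\min}$ by construction; (ii) the map $\beta\mapsto\bar\beta$ is an involution on partitions fitting in a column-height-$N$ strip, so that $\overline{\bar\nu}=\nu$; (iii) $\ell(\mu)+\ell(\nu)\leqslant N$, which follows because $\bar\nu$ occupies exactly $N-\ell(\nu)$ rows inside the $N\times\nu_1$ rectangle and $\mu$ is stacked to fit in the complementary rows. Then $\mathfrak{s}\circ\mathfrak{t}=\mathrm{id}$ because $\mathfrak{s}(\mu,\nu)=[\mu+\bar\nu,\nu_1]$ recovers exactly $(\alpha_{\min},t_{\min})$, and $\mathfrak{t}\circ\mathfrak{s}=\mathrm{id}$ because decomposing $\mu+\bar\nu$ back into its ``$\bar\nu$-part'' (the cells in columns $>\mu_1$... here one must be careful, $\mu$ and $\bar\nu$ interleave by rows, not columns) and ``$\mu$-part'' is unambiguous: rows $1,\dots,N-\ell(\nu)$ give $\mu+\bar\nu$ and rows below are empty, while $\bar\nu$ is the unique staircase-complement filling determined by $\nu_1=t_{\min}$ and the requirement of being a $180^\circ$-rotated complement.

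An alternative, possibly cleaner route avoids constructing $\mathfrak{t}$ by hand: one checks injectivity and surjectivity separately. Injectivity: if $\mathfrak{s}(\mu,\nu)=\mathfrak{s}(\mu',\nu')$ then $\nu_1=\nu'_1$ (equal $t_{\min}$ components) and $\mu+\bar\nu=\mu'+\bar{\nu'}$ as partitions; since $\bar\nu$ is entirely determined by the skew-complement data inside the $N\times\nu_1$ rectangle and $\mu$ lives in the first $N-\ell(\nu)$ rows, comparing row lengths from the bottom up recovers $\nu=\nu'$ and hence $\mu=\mu'$. Surjectivity: given any class with minimal representative $(\alpha_{\min},t_{\min})$, reconstruct $\nu$ from the last $t_{\min}$ columns of $\alpha_{\min}$ via \eqref{eq:map_bar} and $\mu$ from the rest; the constraint $\ell(\alpha_{\min})<N$ when $t_{\min}>0$ is exactly what guarantees $(\mu,\nu)\in\Lambda(N)$.

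The main obstacle I anticipate is purely bookkeeping: getting the $180^\circ$-rotation-and-complement operation \eqref{eq:map_bar} and its inverse exactly right, and verifying that $\mu$ and $\bar\nu$ genuinely occupy complementary sets of rows inside $\alpha_{\min}=\mu+\bar\nu$ so that the decomposition is canonical. Concretely, one needs the identity that, for $\nu\in\mathcal{P}(N)$ with $\nu_1=t$, the diagram $\bar\nu$ has $\ell(\bar\nu)=N-\ell(\nu)$ and $(\bar\nu)_1=t$ whenever $\nu\neq\varnothing$, together with the involution property $\overline{\bar\nu}=\nu$ relative to the ambient height $N$; these follow from \eqref{eq:map_bar} by transposing and unwinding the definition, but care is needed to handle the edge cases $\nu=\varnothing$ (forcing $t_{\min}=0$) and $\ell(\nu)=N$ (impossible for a nonempty $\nu$ paired in $\Lambda(N)$ with any $\mu$, and also excluded on the $\mathcal{S}(N)$ side by minimality). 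Once these are in place, the bijection statement is immediate.
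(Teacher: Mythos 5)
Your overall strategy — construct an explicit two-sided inverse $\mathfrak{s}^{-1}$ from the minimal representative — is the same as the paper's, which writes out $\mathfrak{s}^{-1}$ by explicit casework. However, your picture of how $\mu$ and $\bar\nu$ sit inside $\alpha = \mu + \bar\nu$ contains a concrete error that would derail the verification if followed literally. You write that ``$\mu$ and $\bar\nu$ interleave by rows, not columns'' and later that they ``occupy complementary sets of rows inside $\alpha$.'' Neither holds: from the geometric description of \eqref{eq:map_bar} one computes $\bar\nu_i = \nu_1 - \nu_{N+1-i}$, so $\bar\nu_i = \nu_1$ for every $i \leqslant N - \ell(\nu)$, while $\ell(\mu) \leqslant N - \ell(\nu)$; thus $\mu$ and $\bar\nu$ are both supported on the same initial block of rows and simply \emph{add} there. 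The correct complementary picture is by \emph{columns}, with the cut at column $t = \nu_1$ (not at column $\mu_1$, your first guess, nor by rows, your correction): $\bar\nu = \alpha \cap (t_{N})$ is the intersection of $\alpha$ with its first $t$ columns, and $\mu$ is $\alpha$ with those $t$ columns deleted, i.e.\ $\mu_i = \max(\alpha_i - t, 0)$. What does split by rows is only the \emph{recovery}: rows with $\alpha_i \geqslant t$ give $\mu_i = \alpha_i - t$, and rows with $\alpha_i < t$ give $\nu_{N+1-i} = t - \alpha_i$; reading $\mu$ off ``the rows where $\bar\nu$ is absent'' would always return $\mu = \varnothing$, since $\bar\nu_i = 0$ forces $\mu_i = 0$. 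Once the geometry is corrected, your bookkeeping plan does go through and reproduces the paper's casework (distinguishing $t = 0$, $t \geqslant \alpha_1$, and $1 \leqslant t < \alpha_1$ with a break point at the last row exceeding $t$).
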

    \begin{proof}
        Let us describe the inverse map $\mathfrak{s}^{-1}$ by constructing $(\mu,\nu)\in \Lambda(N)$ from any $[\alpha,t] \in \mathcal{S}(N)$. In the sequel let $(\alpha,t)$ be the minimal representative.
        \vskip 2pt
        
        For $t = 0$ set $\mu = \alpha$ and $\nu = \varnothing$. Otherwise, let $t > 1$ so that $\alpha_N = 0$. If $t \geqslant \alpha_{1}$ then set $\mu = \varnothing$ and $\nu = (t - \alpha_{N},t - \alpha_{N-1},\ldots,t - \alpha_{1})$. In the opposite case one has $1 \leqslant t < \alpha_{1}$. Let $p$ be the maximal integer among $\{1,\ldots, N - 1\}$ such that $\alpha_{p} > t$, then set $\mu = (\alpha_{1} - t, \ldots,\alpha_{p} - t)$. Next, for the minimal integer $q$ among $\{p+1,\ldots, N\}$ such that $\alpha_{1} < t$ (which exists since $\alpha_{N} = 0$) set $\nu = (t - \alpha_{N},\ldots,t - \alpha_{q})$. As a matter of a direct check $\mathfrak{s}\big(\mathfrak{s}^{-1}[\alpha,t]\big) = [\alpha,t]$ and $\mathfrak{s}^{-1}\big(\mathfrak{s}(\mu,\nu)\big) = (\mu,\nu)$.
    \end{proof}

    The map \eqref{eq:map_staircase} admits a simple geometric interpretation which can be found for example in \cite{King_GenYTab}. The following example suffices to grasp the idea: for $N = 6$ take $\mu = (4,2,1)$ and $\nu = (3,2)$ so that $\mathfrak{s}(\mu,\nu)$ has the minimal representative with $\alpha = (7,5,4,3,1)$ and $t = 3$:

    \begin{equation}
    \ytableausetup{boxsize=8pt} \big(\, \ydiagram{4,2,1},\;\ydiagram{3,2}\,\big) \quad \xrightarrow{\phantom{m}\mathfrak{s}\phantom{m}}\quad \big[\begin{tikzpicture}[baseline=(current bounding box.center)]
            \def \l {0.3};
            \filldraw[lightgray] (0,0) -- (0,\l) -- (\l,\l) -- (\l,2*\l) -- (3*\l,2*\l) -- (3*\l,0) -- cycle ;
            \filldraw[lightgray] (3*\l,6*\l) -- (7*\l,6*\l) -- (7*\l,6*\l) -- (7*\l,5*\l) -- (5*\l,5*\l) -- (5*\l,4*\l) -- (4*\l,4*\l) -- (4*\l,3*\l) -- (3*\l,3*\l) -- cycle ;
            \draw (0,0) rectangle (3*\l,6*\l);
            \draw (0,\l) -- (3*\l,\l);
            \draw (\l,0) -- (\l,\l);
            \draw (2*\l,0) -- (2*\l,2*\l);
            \draw[double] (0,\l) -- (0,6*\l) -- (7*\l,6*\l) -- (7*\l,5*\l) -- (5*\l,5*\l) -- (5*\l,4*\l) -- (4*\l,4*\l) -- (4*\l,3*\l) -- (3*\l,3*\l) -- (3*\l,2*\l) -- (\l,2*\l) -- (\l,\l) -- cycle;
            \foreach[count = \i] \j in {1,3,4,5,7}
            {\draw (0,\l*\i) rectangle (\l*\j,{\l*(\i+1)});
            \foreach \k in {1,...,\j}
            {\draw (\k*\l,\l*\i) -- (\k*\l,{\l*(\i+1)});}
            }
            \draw[line width=0.7pt,<->] (0,-0.6*\l) -- (3*\l,-0.6*\l);
            \node at (-2.1*\l,3*\l) {$\scriptstyle{N = 6}$};
            \draw[line width=0.7pt,<->] (-0.6*\l,6*\l) -- (-0.6*\l,0);
            \node at (1.5*\l,-1.5*\l) {$\scriptstyle{t = 3}$};
    \end{tikzpicture}\;,\;3\big]
    \end{equation}
    \noindent Other way around, $\mu$ is obtained from $\alpha$ by omitting the $t$ leftmost columns, while $\nu$ is the set-theoretic difference $(t_N)\backslash \alpha$ rotated by $180^{\circ}$ (if $\alpha_{1} \leqslant t$ then $\mu = \varnothing$, if $t = 0$ then $\nu = \varnothing$).
    
    \paragraph{Character ring.} Integer combinations of polynomial characters of $GL(N)$ form an associative commutative unital ring: for any $\alpha,\beta\in \mathcal{P}(N)$ one has
    \begin{equation}\label{eq:LR_product}
        s_{\alpha}(x_{1},\ldots,x_{N}) \cdot s_{\beta}(x_{1},\ldots,x_{N}) = \sum_{\gamma\in \mathcal{P}(N)} c^{\gamma}_{\alpha\beta} s_{\gamma}(x_{1},\ldots,x_{N}),
    \end{equation}
    where the structure constants $c^{\gamma}_{\alpha\beta} \in\mathbb{N}_{0}$ are the {\it Littlewood-Richardson coefficients}. The latter admit a number of combinatorial descriptions in terms of operations with standard Young tableaux (see \cite{Fulton_YT}).
    \vskip 2pt

    From the relation between polynomial and rational characters \eqref{eq:rational_polynomial_Schur} via the map \eqref{eq:map_staircase} one reads off the structure of the associative commutative unital ring of integer combinations of rational Schur functions. The product rule follows from the extended RSK algorithm \cite{Stembridge_87} or, alternatively, from the products of universal characters \cite{Koike_89}. In the sequel we make use of the following particular product: for any non-empty $\rho,\sigma \in \mathcal{P}(N)$ consider
    \begin{equation}\label{eq:LR_ext_particular}
        s_{(\rho,\varnothing)}(x_{1},\ldots,x_{N})\cdot s_{(\varnothing,\sigma)}(x_{1},\ldots,x_{N}) = \sum_{(\mu,\nu)\in \Lambda(N)} c_{\rho\sigma}^{\mu\nu}(N)\,s_{(\mu,\nu)}(x_{1},\ldots,x_{N}),
    \end{equation}
    where the structure constants $c_{\rho\sigma}^{\mu\nu}(N) \in \mathbb{N}_0$ can be expressed in terms of the Littlewood-Richardson coefficients as follows. By recalling the relation of rational characters to Schur polynomials \eqref{eq:rational_polynomial_Schur} one has
    \begin{equation*}
            s_{(\rho,\varnothing)}(x_1,\ldots, x_N)\cdot s_{(\varnothing,\sigma)}(x_1,\ldots, x_N) = (x_1 \ldots x_N)^{-\sigma_{1}} s_{\rho}(x_1,\ldots, x_N)\cdot s_{\bar{\sigma}}(x_1,\ldots, x_N).
        \end{equation*}
    Then, by applying \eqref{eq:LR_product} one rewrites the product on the right-hand side of the above expression as follows:
    \begin{equation}\label{eq:LR_ext_poly}
        s_{(\rho,\varnothing)}(x_{1},\ldots,x_{N}) \cdot s_{(\varnothing,\sigma)}(x_{1},\ldots,x_{N}) = \sum_{\lambda\in \mathcal{P}(N)} c^{\lambda}_{\rho\bar{\sigma}}\,\dfrac{1}{(x_{1}\ldots x_{N})^{\sigma_{1}}}\,s_{\lambda}(x_{1},\ldots,x_{N}).
    \end{equation}
    By \eqref{eq:map_staircase} one identifies $s_{(\mu,\nu)}(x_{1},\ldots,x_{N}) =(x_{1}\ldots x_{N})^{-\sigma_{1}}\,s_{\lambda}(x_{1},\ldots,x_{N})$ such that $\mathfrak{s}^{-1}[\lambda,\sigma_{1}] = (\mu,\nu)$, so by comparing \eqref{eq:LR_ext_particular} and \eqref{eq:LR_ext_poly} one has:
    \begin{equation}\label{eq:multiplicity_LR_coef}
    \def\arraystretch{1.3}
        c_{\rho\sigma}^{\mu\nu}(N) = 
        \left\{
        \begin{array}{rl}
            c^{\lambda}_{\rho\bar{\sigma}}, & \text{such that $(\mu,\nu) = \mathfrak{s}^{-1}[\lambda,\sigma_{1}]$ if $\lambda$ occurs on the right-hand side of \eqref{eq:LR_ext_poly}}\\
            0, & \text{otherwise}
        \end{array}
        \right.
    \end{equation}
    Another description of the above structure constants \eqref{eq:rational_polynomial_Schur} is given in \cite[Corollary 2.3.1]{Koike_89} in terms of universal characters.

\section{Traceless projection of mixed tensors}\label{sec:one}

    \subsection{Traceless projector}\label{sec:traceless_projection}
    
    \paragraph{Mixed tensor products.} Let $V$ be a finite-dimensional $\mathbb{C}$-vector space of dimension $N \geqslant 1$, and let $V^{*}$ denote the dual space. For any fixed integers $m,n \geqslant 0$ consider the space of $m$-contravariant and $n$-covariant tensors
    \begin{equation}\label{eq:mixed_tensors}
        V^{m,n} = V^{\otimes m}\otimes V^{*\otimes n},
    \end{equation}
    where by definition $V^{0,0} = \mathbb{C}$. In the case when both $m,n\geqslant 1$ one says that \eqref{eq:mixed_tensors} is a {\it mixed tensor product}. By fixing a basis $\{e_{i}\}$ in $V$, as well as the canonical dual basis $\{e^{i}\}$ in $V^{*}$, each tensor $T\in V^{m,n}$ is identified with the set of its components:
    \begin{equation}\label{eq:mixed_basis}
        T = t^{i_1\ldots i_{m}}_{\phantom{i_1\ldots i_{m}} j_{1} \ldots j_{n}} \, e_{i_1}\otimes \ldots \otimes e_{i_m}\otimes e^{j_1} \otimes \ldots \otimes e^{j_n}\,.
    \end{equation}
    Here and in the sequel for each pair of repeated upper and lower indices summation is implied. In particular, $V^{1,1} = V \otimes V^{*}$ is isomorphic to the space of endomorphisms of $V$, with the trace subspace spanned by the identity operator:
    \begin{equation}\label{eq:cotrace}
        E = \sum_{i = 1}^{N} e_{i} \otimes e^{i} = \delta^{i}_{\phantom{i}j} \,e_{i}\otimes e^{j}.
    \end{equation}

    \paragraph{Traceless tensors.} Consider the following contraction maps: for any $1\leqslant \sfa\leqslant m$ and $1^{\prime} \leqslant \hsfb\leqslant n^{\prime}$ define
    \begin{equation}\label{eq:map_trace}
        \mathrm{tr}_{\sfa\hsfb} : V^{m,n} \to V^{m-1,n-1}
    \end{equation}
    such that 
    \begin{equation}\label{eq:map_trace_components}
        \mathrm{tr}_{\sfa\hsfb} (v_{1}\otimes \ldots\otimes v_{m}\otimes \varphi_{1} \otimes\ldots\otimes \varphi_{n}) = \langle \varphi_{\hsfb},v_{\sfa}\rangle \, (v_{1}\otimes \ldots \otimes \mathrlap{\bcancel{\phantom{\scalebox{1.3}{v}}}}{v_{\sfa}} \otimes\ldots \otimes v_{m}\otimes \varphi_{1} \otimes \ldots \otimes \mathrlap{\bcancel{\phantom{\scalebox{1.3}{n}}}}{\varphi_{\hsfb}} \otimes\ldots \otimes \varphi_{n}).
    \end{equation}
    In words, one contracts the $\sfa$th vector with the $\hsfb$th covector. Equivalently, in terms of tensor components, one contracts the $\sfa$th upper index with the $\hsfb$th lower index:
    \begin{equation}
        \mathrm{tr}_{\sfa\sfb} \,:\, t^{i_{1}\ldots i_{\sfa}\ldots i_{m}}_{\phantom{i_{1}\ldots i_{\sfa}\ldots i_{m}} j_{1}\ldots j_{\hsfb} \ldots j_{n}} \,\mapsto\, t^{i_{1}\ldots k\ldots i_{m}}_{\phantom{i_{1}\ldots k\ldots i_{m}} j_{1}\ldots k \ldots j_{n}}
    \end{equation}
    \vskip 4pt
    
    We complement \eqref{eq:map_trace} by the set of insertion maps: for all 
    $1 \leqslant \sfa \leqslant m$ and $1 \leqslant \hsfb \leqslant n$ define
    \begin{equation}\label{eq:map_cotrace}
        \mathrm{tr}^{+}_{\sfa\hsfb} : V^{m-1,n-1} \to V^{m,n}
    \end{equation}
    such that 
    \begin{equation}\label{eq:map_cotrace_components}
        \mathrm{tr}^{+}_{\sfa\hsfb} (v_{1}\otimes \ldots\otimes v_{m-1}\otimes \varphi_{1} \otimes\ldots\otimes \varphi_{n-1}) = \sum_{i=1}^{N} (v_{1}\otimes \ldots \otimes v_{\sfa-1} \otimes e_{i} \otimes\ldots \otimes v_{m-1}\otimes \varphi_{1} \otimes \ldots \otimes \varphi_{\hsfb-1} \otimes e^{i} \otimes\ldots \otimes \varphi_{n-1}).
    \end{equation}
    In words, one inserts \eqref{eq:cotrace} such that the corresponding basis vectors $\{e_{i}\}$ and $\{e^{i}\}$ occur at the $\sfa$th and $\hsfb$th positions respectively. In terms of tensor components, one multiplies each component by the Kronecker delta, with its contravariant and covariant indices occurring at the $\sfa$th and $\hsfb$th positions respectively:
    \begin{equation}
        \mathrm{tr}^{+}_{\sfa\hsfb} \,:\, t^{i_1\ldots i_{\sfa-1} i_{\sfa}\ldots i_{m-1}}{}_{j_{1}\ldots j_{\hsfb-1} j_{\hsfb}\ldots j_{n-1}} \,\mapsto \, \delta^{i_{\sfa}}_{
        \phantom{i} j_{\hsfb}} t^{i_1\ldots i_{\sfa-1}i_{\sfa+1}\ldots i_{m}}{}_{j_{1}\ldots j_{\hsfb-1} j_{\hsfb+1}\dots j_{n}}
    \end{equation}
    As a matter of a routine exercise, one checks that the map \eqref{eq:map_cotrace} is independent of the choice of a basis in $V$.
    \vskip 2pt
    
    One has the following family of endomorphisms of $V^{m,n}$ for all $1\leqslant \sfa \leqslant m$, $1^{\prime}\leqslant \hsfb \leqslant n^{\prime}$:
    \begin{equation}\label{eq:arcs_image}
        \tau_{\sfa\hsfb} = \ctr{\sfa}{\hsfb}\circ \tr{\sfa}{\hsfb},
    \end{equation}
    whose action on simple tensors reads as follows:
    \begin{equation}
    \def\arraystretch{1.4}
    \begin{array}{l}
        \tau_{\sfa\hsfb} (v_{1}\otimes \ldots \otimes v_{\sfa}\otimes \ldots \otimes v_{m}\otimes \varphi_{1} \otimes \ldots \otimes \varphi_{\sfb^{\prime}} \otimes \ldots\otimes \varphi_{n}) \\
        \displaystyle \phantom{mmmmmmmmm} = \langle \varphi_{\sfb^{\prime}}, v_{\sfa}\rangle\,\sum_{k = 1}^{N} v_{1}\otimes \ldots \otimes e_{k}\otimes \ldots \otimes v_{m}\otimes \varphi_{1} \otimes \ldots \otimes e^{k} \otimes \ldots\otimes \varphi_{n}.
    \end{array}
    \end{equation}

    \paragraph{Traceless subspace.} For any fixed $m,n\geqslant 1$ define the {\it traceless subspace} $V^{m,n}_{0} \subset V^{m,n}$ to be the common kernel of the trace maps:
    \begin{equation}
        V^{m,n}_{0} = \bigcap_{1 \leqslant \sfa \leqslant m, \, 1 \leqslant \hsfb \leqslant n} \mathrm{ker}(\mathrm{tr}_{\sfa\hsfb}),
    \end{equation}
    and denote
    \begin{equation}
        V_{1}^{m,n} = \bigg\langle \bigcup_{1\leqslant \sfa \leqslant m,\,1\leqslant \hsfb \leqslant n}\mathrm{Im}( \ctr{\sfa}{\hsfb})  \bigg\rangle,
    \end{equation}
    the space spanned by the image of \eqref{eq:map_cotrace}. Also consider the following endomorphism of $V^{m,n}$:
    \begin{equation}\label{eq:operator_A_image}
        \scrA_{m,n} = \sum_{1\leqslant\sfa\leqslant m,\;1^{\prime}\leqslant\hsfb\leqslant n^{\prime}} \tau_{\sfa\hsfb} ,
    \end{equation}
    which plays a key role in the sequel, and denote $\mathrm{spec}(\scrA_{m,n})$ the set of eigenvalues of $\scrA_{m,n}$. The proof of the following lemma is delegated to Appendix \bref{sec:proof_lemma_A}.

    \begin{lemma}\label{lem:operator_A}
    For any $m,n\geqslant 1$ and $N \geqslant 1$ the endomorphism $\scrA_{m,n}$ of $V^{m,n}$ has the following properties:
    \begin{itemize}
        \item[1)] $\scrA_{m,n}$ is diagonalisable,
        \item[2)] $\mathrm{ker} (\scrA_{m,n}) = V^{m,n}_{0}$ and $\mathrm{Im} (\scrA_{m,n}) = V_{1}^{m,n}$ so that $V^{m,n} = V_{0}^{m,n} \oplus V_{1}^{m,n}$,
        \item[3)] $\mathrm{spec}(\scrA_{m,n})\subseteq \mathbb{R}_{\geqslant 0}$.
    \end{itemize}
    \end{lemma}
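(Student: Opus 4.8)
The plan is to endow $V$ with an auxiliary Hermitian inner product and to show that, with respect to the induced inner products on the mixed tensor powers, $\scrA_{m,n}$ is a self-adjoint, positive semi-definite operator; all three assertions then reduce to standard facts about such operators. Since $\scrA_{m,n}$, the maps $\tr{\sfa}{\hsfb}$ and $\ctr{\sfa}{\hsfb}$, and the subspaces $V_0^{m,n}$, $V_1^{m,n}$ are all defined without reference to a basis, the inner product serves only as scaffolding and the conclusions are intrinsic.

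Concretely, fix the basis $\{e_i\}$ of $V$ and declare it orthonormal; this induces a positive-definite Hermitian inner product $\langle\cdot,\cdot\rangle$ on every $V^{p,q}$ for which the product basis formed from $\{e_i\}$ and $\{e^i\}$ is orthonormal, i.e. $\langle S,T\rangle = \sum s^{i_1\ldots i_m}{}_{j_1\ldots j_n}\,\overline{t^{i_1\ldots i_m}{}_{j_1\ldots j_n}}$ in components. The key computation is that, for each admissible pair $(\sfa,\hsfb)$, the insertion map $\ctr{\sfa}{\hsfb}$ is the adjoint of the contraction $\tr{\sfa}{\hsfb}$: writing both sides out in components one finds $\langle \tr{\sfa}{\hsfb}S,\hat T\rangle = \langle S,\ctr{\sfa}{\hsfb}\hat T\rangle$ for all $S\in V^{m,n}$, $\hat T\in V^{m-1,n-1}$. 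Hence $\tau_{\sfa\hsfb} = \ctr{\sfa}{\hsfb}\circ\tr{\sfa}{\hsfb} = (\tr{\sfa}{\hsfb})^{*}\circ\tr{\sfa}{\hsfb}$ is self-adjoint and positive semi-definite, with $\langle\tau_{\sfa\hsfb}x,x\rangle = \|\tr{\sfa}{\hsfb}x\|^{2}$; summing over $(\sfa,\hsfb)$, the same holds for $\scrA_{m,n}$.

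Assertions 1) and 3) are then immediate: a self-adjoint endomorphism of a finite-dimensional Hermitian space is diagonalisable, and a positive semi-definite one has non-negative real spectrum. For 2), since $\langle\scrA_{m,n}x,x\rangle = \sum_{\sfa,\hsfb}\|\tr{\sfa}{\hsfb}x\|^{2}$, an element $x$ lies in $\ker\scrA_{m,n}$ iff $\tr{\sfa}{\hsfb}x = 0$ for every $(\sfa,\hsfb)$, that is $\ker\scrA_{m,n} = \bigcap_{\sfa,\hsfb}\ker\tr{\sfa}{\hsfb} = V_0^{m,n}$. From $\tr{\sfa}{\hsfb}\circ\ctr{\sfa}{\hsfb} = N\,\mathrm{id}$ one gets that $\tr{\sfa}{\hsfb}$ is surjective and $\ctr{\sfa}{\hsfb}$ injective; combined with the adjointness, $(\ker\tr{\sfa}{\hsfb})^{\perp} = \mathrm{Im}\,\ctr{\sfa}{\hsfb}$, so taking orthogonal complements in $V_0^{m,n} = \bigcap_{\sfa,\hsfb}\ker\tr{\sfa}{\hsfb}$ yields $(V_0^{m,n})^{\perp} = \sum_{\sfa,\hsfb}\mathrm{Im}\,\ctr{\sfa}{\hsfb} = V_1^{m,n}$. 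Finally, for the self-adjoint operator $\scrA_{m,n}$ one has $\mathrm{Im}\,\scrA_{m,n} = (\ker\scrA_{m,n})^{\perp}$; hence $\mathrm{Im}\,\scrA_{m,n} = (V_0^{m,n})^{\perp} = V_1^{m,n}$ and $V^{m,n} = \ker\scrA_{m,n}\oplus\mathrm{Im}\,\scrA_{m,n} = V_0^{m,n}\oplus V_1^{m,n}$.

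No step presents a genuine difficulty. The only point worth emphasising is the one already flagged: the statement is basis-free whereas the argument runs in a fixed basis, so one should make explicit that $\scrA_{m,n}$ is a canonical $GL(N)$-equivariant endomorphism --- whence $\ker\scrA_{m,n}$ and $\mathrm{Im}\,\scrA_{m,n}$ are $GL(N)$-submodules regardless of the chosen inner product --- and that they have just been identified with the intrinsically defined $V_0^{m,n}$ and $V_1^{m,n}$. A minor but necessary care point is the index bookkeeping in the adjointness identity, where the contracted slots shift position under $\tr{\sfa}{\hsfb}$ and $\ctr{\sfa}{\hsfb}$; this is entirely mechanical.
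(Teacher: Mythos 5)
Your proof is correct and takes essentially the same approach as the paper: both introduce a Hermitian inner product on $V^{m,n}$ (the paper phrases this as passing to the real form $U(N)$, which amounts to declaring a basis orthonormal), show that $\ctr{\sfa}{\hsfb}$ is adjoint to $\tr{\sfa}{\hsfb}$ so that $\scrA_{m,n}$ is self-adjoint and positive semi-definite, and then read off all three claims from the spectral theorem together with the positivity identity $\langle \scrA_{m,n}T,T\rangle = \sum_{\sfa,\hsfb}\|\tr{\sfa}{\hsfb}T\|^2$. Your treatment of $\mathrm{Im}\,\scrA_{m,n} = V_1^{m,n}$ is slightly more explicit than the paper's (which only records the inclusion $V_1^{m,n}\subseteq V_0^{m,n\perp}$ rather than equality), but the underlying argument is identical.
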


    As a straightforward corollary of the above lemma for $N = 1$, $V_{0}^{1,1} = \{0\}$ because $V^{1,1} \subseteq V^{1,1}_{1}$. In this case the traceless projection is zero, and thus \eqref{eq:traceless_projector} expresses the zero operator. Nevertheless for the sake of completeness of the exposition we keep the possibility $N = 1$ in the sequel.
    
    \paragraph{Traceless projector.} Projection of $V^{m,n}$ onto $V^{m,n}_{0}$ is referred to as {\it traceless projection}. With the point (2) of Lemma \bref{lem:operator_A} at hand we focus on the following particular traceless projector:
    \begin{equation}\label{eq:traceless_projector}
        \mathscr{P}_{m,n} : V^{m,n} \to V^{m,n}_{0}\quad\text{such that}\quad \mathrm{ker}(\mathscr{P}_{m,n}) = V_{1}^{m,n}.
    \end{equation}
    Since the above projector maps $V^{m,n}$ onto $\mathrm{ker}(\scrA_{m,n})$ and annihilates $\mathrm{Im}(\scrA_{m,n})$, it admits the simple factorised form \eqref{eq:intro_traceless_projector_factorised}. The set $\mathrm{spec}(\scrA_{m,n})$ can be constructed explicitly, by means of simple manipulations with Young diagrams via the following algorithm (recall the relevant notations and operations with Young diagrams given in Section \bref{sec:intro_notations}).
    \vskip 2pt

    \begin{itemize}
        \item[{\it Step 1.}] Write down all the pairs $(\rho,\sigma)\in \mathcal{P}_{m,n}(N)$.
        \item[{\it Step 2.}] For each pair construct $\bar{\sigma}$ (which may be empty) and apply the Littlewood-Richardson rule to determine all the Young diagrams $\lambda$ with at most $N$ rows such that $c^{\lambda}_{\rho \bar{\sigma}} \neq 0$.
        \item[{\it Step 3.}] For each $\lambda$ construct $(\mu,\nu) = \mathfrak{s}^{-1}[\lambda,\sigma_{1}]$, as well as the skew-shape diagrams $\rho\slash\mu$ and $\sigma\slash\nu$.
        \item[{\it Step 4.}] For each pair of skew-shapes calculate $r = |\rho| - |\mu| = |\sigma| - |\nu|$, so that the corresponding eigenvalue of $\scrA_{m,n}$ equals $Nr + c(\rho\slash\mu) + c(\sigma\slash\nu)$.
    \end{itemize}
    
    \noindent The eigenvalues obtained throughout the above steps exhaust $\mathrm{spec}(\scrA_{m,n})$. Explanation and proof of this result constitutes the subject of the forthcoming Section \bref{sec:traceless_rep_theory}.

    \paragraph{Examples of traceless projectors.} Let us apply the above algorithm to a number of simple cases. In the simplest case of $m = n = 1$ and $N \geqslant 2$, by {\it Step 1} one identifies the only pair $\ytableausetup{boxsize=5.5pt}\rho = \ydiagram{1}$ and $\ytableausetup{boxsize=5.5pt}\sigma = \ydiagram{1}$. By {\it Step 2} one has $\bar{\sigma} = (1_{N-1})$, in which case $c^{\lambda}_{\rho\bar{\sigma}} \neq 0$ for $\lambda = (2,1_{N-2})$ and $(1_{N})$. Calculating $\mathfrak{s}^{-1}[\lambda,1]$ according to {\it Step 3} gives $\ytableausetup{boxsize=5.5pt}\mu = \ydiagram{1}$ and $\ytableausetup{boxsize=5.5pt}\nu = \ydiagram{1}$ for the former case and $\mu = \varnothing$ and $\nu = \varnothing$ for the latter. By {\it Step 4} one has the following two eigenvalues:
    \begin{equation}
        \ytableausetup{boxsize=7pt} N\cdot 0 + c(\ytableaushort{\times}) + c(\ytableaushort{\times}) = 0\;\;\text{(for $r = 0$)}\quad \text{and}\quad N\cdot 1 + c(\ytableaushort{\xnone}) + c(\ytableaushort{\xnone}) = N\;\;\text{(for $r = 1$)}
    \end{equation}
    and thus
    \begin{equation}
        \mathrm{spec}(\scrA_{1,1}) = \{0,N\}.
    \end{equation}
    Since $\scrA_{1,1} = \tau_{1,1^{\prime}}$ one readily recognises the well-known traceless projection of a square $N$-by-$N$ matrix: 
    \begin{equation}\label{eq:traceless_projector_11}
        \mathscr{P}_{1,1} = 1 - \dfrac{1}{N}\tau_{1,1^{\prime}},\quad \text{such that}\quad \scrP_{1,1} \,:\, t^{i}{}_{j} \mapsto t^{i}{}_{j} - \dfrac{1}{N} \delta^{i}_{j} \;t^{k}{}_{k}.
    \end{equation}
    For $N = 1$ one has $\bar{\sigma} = \varnothing$ and $\mu = \nu = \varnothing$. As a result, $\mathrm{spec}(\scrA_{1,1}) = \{1\}$, so again \eqref{eq:traceless_projector_11} is the sought traceless projector. As was already mentioned below Lemma \bref{lem:operator_A}, in the case $N = 1$ the traceless subspace is trivial. This equally follows by observing that $\tau_{1,1^{\prime}}(e_{1}\otimes e^{1}) = e_{1}\otimes e^{1}$, so that \eqref{eq:traceless_projector_11} indeed annihilates $V^{1,1}$.
    \vskip 2pt

    Another simple example is for $m = 2$, $n = 1$. In order to avoid any restrictions on possible Young diagrams at first instance, suppose $N \geqslant 3$. For $\ytableausetup{boxsize=5.5pt}\rho = \ydiagram{1,1}$ and $\ytableausetup{boxsize=5.5pt}\sigma = \ydiagram{1}$ one has $\bar{\sigma} = (1_{N-1})$. By applying the Littlewood-Richardson rule, $c^{\lambda}_{\rho\bar{\sigma}} \neq 0$ for $\lambda = (2,2,1_{N-3})$ and $(2,1_{N-1})$. In the former case one has $\ytableausetup{boxsize=5.5pt} \mu = \ydiagram{1,1}$, $\ytableausetup{boxsize=5.5pt}\nu = \ydiagram{1}$ (with $r = 0$), while the latter gives $\ytableausetup{boxsize=5.5pt}\mu = \ydiagram{1}$, $\nu = \varnothing$ (with $r = 1$), so that the corresponding eigenvalues are
    \begin{equation}
        \ytableausetup{boxsize=7pt} N\cdot 0 + c(\ytableaushort{\times,\times}) + c(\ytableaushort{\times}) = 0\;\;\text{(for $r = 0$)}\quad \text{and}\quad N\cdot 1 + c(\ytableaushort{\times,\xnone}) + c(\ytableaushort{\xnone}) = N - 1\;\;\text{(for $r = 1$)}.
    \end{equation}
    By repeating the same steps for $\ytableausetup{boxsize=5.5pt} \rho = \ydiagram{2}$ and $\ytableausetup{boxsize=5.5pt} \sigma = \ydiagram{1}$ one recovers the two diagrams $(3,1_{N-2})$ and $(2,1_{N-1})$, so that $\ytableausetup{boxsize=5.5pt} \mu = \ydiagram{2}$, $\ytableausetup{boxsize=5.5pt} \nu = \ydiagram{1}$ (with $r = 0$) for the former and $\ytableausetup{boxsize=5.5pt} \mu = \ydiagram{1}$, $\nu = \varnothing$ (with $r = 1$) for the latter. The corresponding eigenvalues are
    \begin{equation}
        \ytableausetup{boxsize=7pt} N\cdot 0 + c(\ytableaushort{\times\times}) + c(\ytableaushort{\times}) = 0\;\;\text{(for $r = 0$)}\quad \text{and}\quad N\cdot 1 + c(\ytableaushort{\times\xnone}) + c(\ytableaushort{\xnone}) = N + 1\;\;\text{(for $r = 1$)}.
    \end{equation}
    All in all one has
    \begin{equation}\label{eq:spec_A_21}
        \mathrm{spec}(\scrA_{2,1}) = \{0,N-1\} \cup\{0,N+1\} = \{0,N-1,N+1\}.
    \end{equation}
    Omitting details, let us mention that the same spectrum takes place for $N = 2$, thus
    \begin{equation}\label{eq:traceless_projector_21}
        \mathscr{P}_{2,1} = \left(1 - \dfrac{1}{N-1}\,\scrA_{2,1}\right)\left(1 - \dfrac{1}{N+1}\,\scrA_{2,1}\right)\quad \text{for all}\; N\geqslant 2.
    \end{equation}
    Direct computation gives
    \begin{equation}
        \scrP_{2,1} \,:\, t^{ij}{}_{k} \mapsto t^{ij}{}_{k} - \dfrac{N}{(N-1)(N+1)}\left(\delta^{i}_{k}\;t^{pj}{}_{p} + \delta^{j}_{k}\;t^{ip}{}_{p}\right) + \dfrac{1}{(N-1)(N+1)} \left(\delta^{i}{}_{k} \; t^{jp}{}_{p} + \delta^{j}{}_{k} \; t^{pi}{}_{p}\right).
    \end{equation}
    \vskip 2pt
    
    The above expression simplifies upon assuming permutation symmetry in the contravariant indices. For a symmetric tensor with components $s^{ij}{}_{k} = s^{ji}{}_{k}$ the traceless projection reduces to
    \begin{equation}\label{eq:traceless_projector_21_s}
        \scrP_{2,1} \,:\, s^{ij}{}_{k} \mapsto s^{ij}{}_{k} - \dfrac{1}{N+1}\left(\delta^{i}_{k}\;s^{jp}{}_{p} + \delta^{j}_{k}\;s^{ip}{}_{p}\right),
    \end{equation}
    which results from application of the only factor with the eigenvalue $N+1$. Similarly, for an anti-symmetric tensor with components $a^{ij}{}_{k} = -a^{ji}{}_{k}$ the traceless projection reads as follows
    \begin{equation}\label{eq:traceless_projector_21_a}
        \scrP_{2,1} \,:\, a^{ij}{}_{k} \mapsto a^{ij}{}_{k} - \dfrac{1}{N-1}\left(\delta^{j}_{k}\;a^{ip}{}_{p} - \delta^{i}_{k}\;a^{jp}{}_{p}\right),
    \end{equation}
    and is due to applying the factor with the eigenvalue $N-1$. The possibility of reducing the number of factors in \eqref{eq:intro_traceless_projector_factorised} is explained in Section \bref{sec:projector_further} by means of restricted traceless projectors \eqref{eq:traceless_projector_restricted}.

    \subsection{Group action in mixed tensor products, and its centraliser algebra}\label{sec:traceless_rep_theory}

    \paragraph{Group action.} The automorphism group of $V$ is the full complex linear group $GL(N)$. The latter acts on the dual space $V^{*}$ contragrediently such that the canonical pairing is $GL(N)$-invariant: for all $v\in V$, $\varphi\in V^{*}$ and $S\in GL(N)$
    \begin{equation}\label{eq:pairing}
        \langle S(\varphi),S(v)\rangle = \langle \varphi,v \rangle .
    \end{equation}
    In this way, $V$ and $V^{*}$ are two inequivalent irreducible representations of $GL(N)$. Given a basis $\{e_{i}\}$ of $V$, as well as the canonical dual basis $\{e^{i}\}$ of $V^{*}$, any element $S\in GL(N)$ is represented by an invertible matrix which gives the decomposition of the image of the basis vectors:
    \begin{equation}\label{eq:action_basis}
        S(e_{i}) = e_{j}\,S^{j}{}_{i}\quad \text{and}\quad S(e^{i}) = (S^{-1})^{i}{}_{j}\,e^{j}.
    \end{equation}
    Note in this respect that the complement $V^{m,n}_{1}$ is $GL(N)$-invariant.
    \vskip 2pt
    
    By assuming the diagonal action of $GL(N)$ on tensors, for each $m,n \geqslant 1$ the mixed tensor product $V^{m,n}$ is a rational representation of $GL(N)$:
    \begin{equation}
        S(v_{1}\otimes \ldots \otimes v_{m}\otimes\varphi_{1}\otimes\ldots \otimes \varphi_{n}) = S(v_1)\otimes \ldots \otimes S(v_{m}) \otimes S(\varphi_{1}) \otimes \ldots \otimes S(\varphi_{n}),
    \end{equation}
    which means that matrix elements of a $GL(N)$-transformation are rational functions of group parameters. The classical result is that rational representations of $GL(N)$ are completely reducible \cite{Schur_dissertation}. 
    \vskip 2pt
    
    Irreducible rational representations occurring in $V^{m,n}$ are indexed by the set (see, {\it e.g.}, \cite{King_GenYTab,Stembridge_87,Koike_89})
    \begin{equation}
        \Lambda_{m,n}(N) = \bigcup_{r = 1}^{\min(m,n)}\Lambda_{m,n}^{(r)}(N)
    \end{equation}
    where for each $r\in \{0,1,\ldots,\min(m,n)\}$
    \begin{equation}\label{eq:index_modules_GL}
        \Lambda_{m,n}^{(r)}(N) = \big\{(\mu,\nu)\in \mathcal{P}^2\;:\; m - |\mu| = n - |\nu| = r\,,\;\; \ell(\mu) + \ell(\nu) \leqslant N\big\}.
    \end{equation}
    For example,
    \begin{equation}\label{eq:partitions}
    \ytableausetup{boxsize=6pt}
        \Lambda^{(0)}_{3,2}(3) = \big\{ (\ydiagram{3},\ydiagram{1,1}),\; (\ydiagram{2,1},\ydiagram{2}),\; (\ydiagram{3},\ydiagram{2})\big\},\quad \Lambda^{(1)}_{3,2}(3) = \big\{ (\ydiagram{1,1},\ydiagram{1}),\; (\ydiagram{2},\ydiagram{1})\big\},\quad \Lambda^{(2)}_{3,2}(3) = \big\{ (\ydiagram{1},\varnothing)\big\},
        \def\arraystretch{0.3}
    \end{equation}
    where one notes the absence of $(\ytableausetup{boxsize=6pt}\ydiagram{2,1},\ydiagram{1,1})$ and $(\ytableausetup{boxsize=6pt}\ydiagram{1,1,1},\ydiagram{1,1})$ in $\Lambda^{(0)}_{3,2}(3)$ due to $\ell(\mu) + \ell(\nu) > 3$.
    \vskip 2pt
    
    For each $(\mu,\nu)\in \Lambda_{m,n}(N)$ denote $U^{(\mu,\nu)}$ the corresponding irreducible rational representation of $GL(N)$, then one has the following decomposition of the mixed tensor product (see \cite[Corollary 4.7]{Stembridge_87} and \cite[Theorem 1.1]{Koike_89}).
    
    \begin{theorem}\label{thm:tensor_decomposition}
    For any $m,n\geqslant 1$ and $N\geqslant 1$ the mixed tensor product $V^{m,n}$ decomposes as a direct sum of irreducible rational representations of $GL(N)$:
    \begin{equation}\label{eq:decomposition_GL}
        V^{m,n} \cong \bigoplus_{(\mu,\nu) \in \Lambda_{m,n}(N)} (U^{(\mu,\nu)})^{\oplus d_{\mu\nu}},\quad d_{\mu\nu} \geqslant 1.
    \end{equation}
    The traceless subspace $V_{0}^{m,n} \subset V^{m,n}$ decomposes as a direct sum of irreducible rational representations of $GL(N)$ indexed by pairs of partitions of $m$ and $n$:
    \begin{equation}\label{eq:traceless_subspace_decomposition}
        V^{m,n}_{0} = \bigoplus_{(\mu,\nu) \in \Lambda^{(0)}_{m,n}(N)} (U^{(\mu,\nu)})^{\oplus d_{\mu\nu}}
    \end{equation}
    As a result, the $GL(N)$-invariant complement $V^{m,n}_{1} \subseteq V^{m,n}$ is uniquely-defined.
    \end{theorem}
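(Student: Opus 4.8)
The plan is to treat the $GL(N)$-decomposition \eqref{eq:decomposition_GL} as the known ingredient — it is \cite[Corollary 4.7]{Stembridge_87}, equivalently \cite[Theorem 1.1]{Koike_89} — and to derive \eqref{eq:traceless_subspace_decomposition} and the uniqueness of $V^{m,n}_{1}$ from it together with Lemma \bref{lem:operator_A}. If one wants a self-contained account of \eqref{eq:decomposition_GL}, it follows by tensoring the Schur--Weyl decompositions $V^{\otimes m}\cong\bigoplus_{\rho}(U^{(\rho,\varnothing)})^{\oplus f^{\rho}}$ and $V^{*\otimes n}\cong\bigoplus_{\sigma}(U^{(\varnothing,\sigma)})^{\oplus f^{\sigma}}$ (with $\rho$ running over partitions of $m$ having at most $N$ parts, $f^{\rho}=\dim S^{\rho}$, and similarly for $\sigma$) and expanding each summand $U^{(\rho,\varnothing)}\otimes U^{(\varnothing,\sigma)}$ by the product rule \eqref{eq:LR_ext_particular}; this yields $d_{\mu\nu}=\sum_{\rho,\sigma}f^{\rho}f^{\sigma}c^{\mu\nu}_{\rho\sigma}(N)$, and positivity $d_{\mu\nu}\geqslant 1$ for every $(\mu,\nu)\in\Lambda_{m,n}(N)$ is obtained by exhibiting one nonzero term: taking $\rho$, resp.\ $\sigma$, obtained from $\mu$, resp.\ $\nu$, by adjoining a horizontal $r$-strip in the first row makes the governing Littlewood--Richardson coefficient equal to $1$, and the hypothesis $\ell(\mu)+\ell(\nu)\leqslant N$ ensures through \eqref{eq:map_staircase} that this term survives the passage to $N$ variables.

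The remaining bookkeeping rests on two elementary observations. First, every contraction map $\mathrm{tr}_{\sfa\hsfb}$ and insertion map $\mathrm{tr}^{+}_{\sfa\hsfb}$ is $GL(N)$-equivariant, since the tensor $E$ of \eqref{eq:cotrace} is the $GL(N)$-invariant identity endomorphism; hence $V^{m,n}_{0}$ and $V^{m,n}_{1}$ are $GL(N)$-submodules, and by Lemma \bref{lem:operator_A}(2) they are complementary in $V^{m,n}$. Second, each $\mathrm{tr}^{+}_{\sfa\hsfb}$ is injective (because $\mathrm{tr}_{\sfa\hsfb}\circ\mathrm{tr}^{+}_{\sfa\hsfb}=N\cdot\mathrm{id}$ with $N\geqslant 1$), so $\mathrm{Im}\,\mathrm{tr}^{+}_{\sfa\hsfb}\cong V^{m-1,n-1}$ as a $GL(N)$-module, and consequently every irreducible constituent of $V^{m,n}_{1}$ already appears in $V^{m-1,n-1}$ — that is, it is some $U^{(\mu,\nu)}$ with $m-|\mu|=n-|\nu|\geqslant 1$. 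The dual remark is that for $(\mu,\nu)\in\Lambda^{(0)}_{m,n}(N)$ the irrep $U^{(\mu,\nu)}$ does not occur in $V^{m-1,n-1}$, so by Schur's lemma each $\mathrm{tr}_{\sfa\hsfb}$ annihilates the whole $U^{(\mu,\nu)}$-isotypic component of $V^{m,n}$; this gives the inclusion $\bigoplus_{(\mu,\nu)\in\Lambda^{(0)}_{m,n}(N)}(U^{(\mu,\nu)})^{\oplus d_{\mu\nu}}\subseteq V^{m,n}_{0}$.

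The substantive point, and the step I expect to be the main obstacle, is the opposite inclusion: $V^{m,n}_{0}$ has \emph{no} constituent $U^{(\mu,\nu)}$ with $m-|\mu|\geqslant 1$, equivalently $\dim V^{m,n}_{0}$ does not exceed $\sum_{(\mu,\nu)\in\Lambda^{(0)}_{m,n}(N)}d_{\mu\nu}\dim U^{(\mu,\nu)}$. This is precisely the trace-decomposition content of \cite[Theorem 1.1]{Koike_89}, which I would invoke. For a self-contained argument the natural route is through characters: $\mathrm{ch}\,V^{m,n}=(x_{1}+\dots+x_{N})^{m}(x_{1}^{-1}+\dots+x_{N}^{-1})^{n}$ is the image of the universal-character product $\{1;\varnothing\}^{m}\{\varnothing;1\}^{n}$ under specialisation to $N$ variables; grading the universal character ring by the total number of boxes, its top-degree component is $\sum_{\mu\vdash m,\,\nu\vdash n}f^{\mu}f^{\nu}\{\mu;\nu\}$ because $\{\mu;\varnothing\}\{\varnothing;\nu\}=\{\mu;\nu\}+(\text{lower-degree terms})$, while King's modification rules replace a universal character of shape with $\ell(\mu)+\ell(\nu)>N$ only by $\pm$ genuine rational characters of strictly fewer boxes and the same content. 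Hence the modifications of the top-degree terms fall entirely among constituents with $m-|\mu|\geqslant 1$, so the ``$r=0$'' part of $\mathrm{ch}\,V^{m,n}$ equals exactly $\sum_{(\mu,\nu)\in\Lambda^{(0)}_{m,n}(N)}f^{\mu}f^{\nu}s_{(\mu,\nu)}$; promoting this character identity to the asserted module equality requires the additional combinatorial input of \cite{Koike_89,Stembridge_87}, which I would not reproduce.

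Finally I would assemble the pieces. The two inclusions of the second and third paragraphs give \eqref{eq:traceless_subspace_decomposition}, with in fact $d_{\mu\nu}=f^{\mu}f^{\nu}$ for $(\mu,\nu)\in\Lambda^{(0)}_{m,n}(N)$; Lemma \bref{lem:operator_A}(2) together with \eqref{eq:decomposition_GL} then forces $V^{m,n}_{1}\cong\bigoplus_{(\mu,\nu)\in\Lambda_{m,n}(N)\setminus\Lambda^{(0)}_{m,n}(N)}(U^{(\mu,\nu)})^{\oplus d_{\mu\nu}}$ by comparison of multiplicities. Since the index sets $\Lambda^{(0)}_{m,n}(N)$ and $\Lambda_{m,n}(N)\setminus\Lambda^{(0)}_{m,n}(N)$ are disjoint, $V^{m,n}_{0}$ and $V^{m,n}_{1}$ share no irreducible constituent; because $V^{m,n}$ is completely reducible and $V^{m,n}_{0}$ is a sum of \emph{full} isotypic components, it admits a unique $GL(N)$-invariant complement, which therefore coincides with $V^{m,n}_{1}$. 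This establishes the last assertion.
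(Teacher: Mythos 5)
Your proposal is correct and takes essentially the same route as the paper, which states Theorem~\ref{thm:tensor_decomposition} as a known result citing \cite[Corollary 4.7]{Stembridge_87} and \cite[Theorem 1.1]{Koike_89} and supplies no proof of its own. Your elaboration of the elementary parts (equivariance of the trace and insertion maps, Schur's lemma, and the direct-sum split furnished by Lemma~\ref{lem:operator_A}) together with the explicit deferral to Koike/Stembridge for the opposite inclusion reflects the same dependency structure the paper implicitly relies on.
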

    
    \paragraph{Centraliser algebra.} Projection onto the $GL(N)$-invariant subspace $V_{0}^{m,n}$ along the $GL(N)$-invariant complement $V_{1}^{m,n}$ commutes with the action of $GL(N)$, and hence the corresponding projector \eqref{eq:traceless_projector} belongs to the centraliser algebra $C_{m,n}(N) = \mathrm{End}_{GL(N)}(V^{(m,n)})$:
    \begin{equation}
        \mathscr{P}_{m,n} \in C_{m,n}(N).
    \end{equation}
    \vskip 4pt
    
    The algebra $C_{m,n}(N)$ contains the image of the symmetric group algebra $\mathbb{C}[\Sn{m}\times \Sn{n}]$ generated by transpositions $\tau_{\sfa\sfb}$ and $\tau_{\hsfa\hsfb}$ (for all $1 \leqslant \sfa < \sfb \leqslant m$ and $1^{\prime} \leqslant \hsfa < \hsfb \leqslant n^{\prime}$):
    \begin{equation}\label{eq:transpositions}
    \def\arraystretch{1.4}
    \begin{array}{l}
        \tau_{\sfa\sfb}(v_{1} \otimes \ldots \otimes v_{\sfa} \otimes \ldots \otimes v_{\sfb}\otimes\ldots \otimes v_{m} \otimes \varphi^{1^{\prime}}\otimes \ldots \otimes \varphi^{n^{\prime}})\\
        \phantom{mmmmmmmmmmmm} =\; v_{1} \otimes \ldots \otimes v_{\sfb} \otimes \ldots \otimes v_{\sfa}\otimes\ldots \otimes v_{m} \otimes \varphi^{1^{\prime}}\otimes \ldots \otimes \varphi^{n^{\prime}},\\
        \tau_{\hsfa\hsfb}(v_{1}\otimes \ldots \otimes v_{m}\otimes \varphi^{1} \otimes \ldots \otimes \varphi^{\hsfa} \otimes \ldots \otimes \varphi^{\hsfb}\otimes\ldots \otimes \varphi^{n})\\
        \phantom{mmmmmmmmmmmm} =\; v_{1}\otimes \ldots \otimes v_{m}\otimes \varphi^{1^{\prime}} \otimes \ldots \otimes \varphi^{\hsfb} \otimes \ldots \otimes \varphi^{\hsfa}\otimes\ldots \otimes \varphi^{n^{\prime}}.
    \end{array}
    \end{equation}
    The whole algebra $C_{m,n}(N)$ is finitely-generated due to the following result [Koike, Lemma 1.2].
    \begin{theorem}\label{thm:cenraliser_generate}
        For any $m,n\geqslant 1$ and $N \geqslant 1$, for all $1 \leqslant \sfa < \sfb \leqslant m$ and $1^{\prime} \leqslant \hsfa < \hsfb \leqslant n^{\prime}$ the endomorphisms $\tau_{\sfa\sfb}$, $\tau_{\hsfa\hsfb}$ and $\tau_{\sfa\hsfb}$ generate the centraliser algebra $C_{m,n}(N)$ of the diagonal action of $GL(N)$ in the space of mixed tensors $V^{m,n}$.
    \end{theorem}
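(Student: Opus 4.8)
The plan is to realise $C_{m,n}(N)$ as a space of $GL(N)$-invariants, apply the first fundamental theorem of invariant theory for $GL(N)$ to produce a spanning set of ``contraction'' operators, and then recognise each such operator as a product of the three families $\tau_{\sfa\sfb}$, $\tau_{\hsfa\hsfb}$, $\tau_{\sfa\hsfb}$.

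First I would rewrite $C_{m,n}(N) = \mathrm{End}_{GL(N)}(V^{m,n})$ as the subspace of $GL(N)$-invariants in $V^{m,n}\otimes (V^{m,n})^{*}$. Since $(V^{m,n})^{*} \cong V^{*\otimes m}\otimes V^{\otimes n}$, reordering tensor factors identifies this $GL(N)$-equivariantly with the invariant subspace of $V^{\otimes(m+n)}\otimes V^{*\otimes(m+n)}$. By the first fundamental theorem for $GL(N)$ (see \cite{GoodmanWallach1998,Weyl}), the latter is linearly spanned by the complete-contraction tensors, one for each bijection between the $m+n$ tensor slots of type $V$ and the $m+n$ slots of type $V^{*}$. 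Transported back along the above identifications — bending the dual legs into position in the standard graphical calculus — each contraction tensor becomes the endomorphism of $V^{m,n}$ attached to a walled diagram on $m$ ``left'' and $n$ ``right'' points, in which through strands do not cross the wall and arcs cross it once. Hence $C_{m,n}(N)$ is spanned by the walled-diagram operators $\pi_{D}$; only the spanning part of the first fundamental theorem is used, so no constraint on $N$ enters at this stage.

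What remains is the combinatorial statement that each $\pi_{D}$ is a product of the listed generators, which I would prove by induction on the number $r$ of arcs of $D$. If $r = 0$, then $D$ consists of through strands only, and because through strands never cross the wall it is the diagram of a permutation in $\Sn{m}\times\Sn{n}$; thus $\pi_{D}$ is a product of the $\tau_{\sfa\sfb}$ and $\tau_{\hsfa\hsfb}$. If $r \geqslant 1$, choose a cap of $D$ joining a top-left point to a top-right point and a cup joining a bottom-left to a bottom-right point; pre- and post-composing $\pi_{D}$ with suitable permutations of the contravariant and of the covariant slots (already available from the $r=0$ case) I may assume that cap and cup both sit at position $(m,1^{\prime})$. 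Then $\pi_{D} = \ctr{m}{1^{\prime}}\circ \widetilde{D}\circ \tr{m}{1^{\prime}}$, where $\widetilde{D}$ is a walled-diagram operator on $m-1$ left and $n-1$ right points with $r-1$ arcs. Since $\tr{m}{1^{\prime}}$ intertwines $\widetilde{D}$ with the endomorphism $\widehat{D}$ of $V^{m,n}$ acting as $\widetilde{D}$ on the remaining slots and as the identity on the slots $m$ and $1^{\prime}$, this equals $\tau_{m,1^{\prime}}\circ \widehat{D}$; here $\widehat{D}$ is a walled-diagram operator with $r-1$ arcs fixing $(m,1^{\prime})$, hence a product of generators by the induction hypothesis, while $\tau_{m,1^{\prime}}$ is itself a generator. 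Undoing the normalising permutations completes the induction and the proof.

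The main obstacle is the very first step: the input that the $GL(N)$-commutant of $V^{\otimes(m+n)}\otimes V^{*\otimes(m+n)}$ is spanned by complete contractions. This is exactly the classical first fundamental theorem of invariant theory for the general linear group; a fully self-contained account would either invoke it as a black box (as I would do) or reprove it by a polarisation/averaging argument. The second step is routine but requires care with the wall condition — one must use that through strands do not cross the wall, which is what pins the $r=0$ diagrams inside $\Sn{m}\times\Sn{n}$ and makes the normal form ``contraction at $(m,1^{\prime})$ times a diagram on fewer points'' available. As a by-product one sees that the full family $\{\tau_{\sfa\hsfb}\}$ is redundant: conjugating $\tau_{m,1^{\prime}}$ by permutations already yields every $\tau_{\sfa\hsfb}$.
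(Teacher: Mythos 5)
Your proof is correct. Note that the paper itself does not supply a proof of this theorem: it cites Lemma~1.2 of \cite{Koike_89}, which is an invariant-theoretic argument along essentially the same lines as yours. Your chain of steps — realise $C_{m,n}(N) = \mathrm{End}_{GL(N)}(V^{m,n})$ as the $GL(N)$-invariants of $V^{\otimes(m+n)}\otimes V^{*\otimes(m+n)}$, use the spanning half of the first fundamental theorem (so no restriction on $N$ enters) to obtain a spanning set of complete contractions, observe that every such contraction corresponds to a walled diagram because a contraction necessarily pairs a $V$ with a $V^{*}$ (so through-strands stay on one side of the wall and arcs cross it), and then peel off arcs inductively via $D = t_{m,1^{\prime}}\widehat{D}$ after a normalising conjugation by a permutation in $\Sn{m}\times\Sn{n}$ — is precisely how this statement is standardly established. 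The intertwining identity $\tr{m}{1^{\prime}}\circ\widehat{D} = \widetilde{D}\circ\tr{m}{1^{\prime}}$, on which the inductive step rests, holds because $\widehat{D}$ acts as the identity on the two slots being contracted, and you state this explicitly. Your closing remark that the family $\{\tau_{\sfa\hsfb}\}$ is redundant given a single $\tau_{m,1^{\prime}}$ and the permutations, by the conjugation formula $\sigma\,\tau_{m,1^{\prime}}\,\sigma^{-1} = \tau_{\sigma(m)\,\sigma(1^{\prime})}$, is also correct.
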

    \noindent As an immediate consequence of the above theorem,
    \begin{equation}
        \scrA_{m,n} \in C_{m,n}(N)\,,
    \end{equation}
    which indeed implies that $\mathscr{P}_{m,n} \in C_{m,n}(N)$ manifestly in view of \eqref{eq:intro_traceless_projector_factorised}. Also note that the factorised form \eqref{eq:intro_traceless_projector_factorised} implies the following:
    \begin{equation}\label{eq:traceless_projector_X}
        \scrP_{m,n} = 1 + \mathscr{X}_{m,n}\,,
    \end{equation}
    where $\mathscr{X}_{m,n}$ is spanned by products of the trace generators $\tau_{\sfa\hsfb}$.
    \vskip 2pt
    
    Finally, by combining Theorem \bref{thm:tensor_decomposition} with Lemma \bref{lem:operator_A}, the traceless projector \eqref{eq:intro_traceless_projector_factorised} acts by identity in any isotypic $GL(N)$-component if $(\mu,\nu)\in \Lambda^{(0)}_{m,n}$, and annihilates it otherwise, which implies the following.
    \begin{proposition}\label{prop:P_central}
    For any $m,n\geqslant 1$ and $N\geqslant 1$,
    \begin{equation}
        \mathscr{P}_{m,n} \in Z\big(C_{m,n}(N)\big).
    \end{equation}
    In particular, $\mathscr{P}_{m,n}$ commutes with the action of $\mathbb{C}[\Sn{m}\times\Sn{n}]$ in $V^{m,n}$, generated by the transpositions \eqref{eq:transpositions}.
    \end{proposition}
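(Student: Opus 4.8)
The plan is to exploit the semisimplicity of the $GL(N)$-module $V^{m,n}$ together with the precise description of the traceless subspace in Theorem~\bref{thm:tensor_decomposition}. By complete reducibility of rational representations, $C_{m,n}(N) = \mathrm{End}_{GL(N)}(V^{m,n})$ is a semisimple algebra whose blocks (by Schur's lemma / Artin--Wedderburn) are indexed by the isomorphism classes $(\mu,\nu)\in\Lambda_{m,n}(N)$ occurring in~\eqref{eq:decomposition_GL}, and whose center is spanned by the primitive central idempotents $e_{\mu\nu}$ projecting onto the $(\mu,\nu)$-isotypic components. The structural input I would emphasise is that $V^{m,n}_{0}$ and its complement $V^{m,n}_{1}$ are each \emph{full unions of isotypic components}: equation~\eqref{eq:traceless_subspace_decomposition} states that $V^{m,n}_{0}$ is the sum over $(\mu,\nu)\in\Lambda^{(0)}_{m,n}(N)$ with exactly the same multiplicities $d_{\mu\nu}$ as in~\eqref{eq:decomposition_GL}, and since the index sets $\Lambda^{(r)}_{m,n}(N)$ are pairwise disjoint (the integer $r$ being recovered from $(\mu,\nu)$ as $r=m-|\mu|=n-|\nu|$), the complement $V^{m,n}_{1}=\bigoplus_{(\mu,\nu)\in\Lambda_{m,n}(N)\setminus\Lambda^{(0)}_{m,n}(N)}(U^{(\mu,\nu)})^{\oplus d_{\mu\nu}}$ is likewise a full union of isotypic components disjoint from $V^{m,n}_{0}$ in isomorphism type.

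First I would observe that any $\phi\in C_{m,n}(N)$, being $GL(N)$-equivariant, maps each isotypic component of $V^{m,n}$ into itself; hence $\phi(V^{m,n}_{0})\subseteq V^{m,n}_{0}$ and $\phi(V^{m,n}_{1})\subseteq V^{m,n}_{1}$. Writing an arbitrary $v\in V^{m,n}$ as $v=v_{0}+v_{1}$ with $v_{i}\in V^{m,n}_{i}$, and using that $\scrP_{m,n}$ is the identity on $V^{m,n}_{0}$ and zero on $V^{m,n}_{1}$ (Lemma~\bref{lem:operator_A}(2) together with \eqref{eq:traceless_projector}), a one-line check gives $\scrP_{m,n}\,\phi(v)=\phi(v_{0})=\phi\,\scrP_{m,n}(v)$. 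Since $v$ and $\phi$ are arbitrary, $\scrP_{m,n}$ commutes with every element of $C_{m,n}(N)$, i.e. $\scrP_{m,n}\in Z\big(C_{m,n}(N)\big)$. Equivalently — and this is the formulation I would record — one has $\scrP_{m,n}=\sum_{(\mu,\nu)\in\Lambda^{(0)}_{m,n}(N)} e_{\mu\nu}$, a sum of primitive central idempotents, which is manifestly central.

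The last assertion of the proposition follows at once: the transpositions $\tau_{\sfa\sfb}$ and $\tau_{\hsfa\hsfb}$ of~\eqref{eq:transpositions} merely permute tensor factors and therefore commute with the diagonal $GL(N)$-action, so they belong to $C_{m,n}(N)$; consequently the image of $\mathbb{C}[\Sn{m}\times\Sn{n}]$ lies inside $C_{m,n}(N)$ and commutes with $\scrP_{m,n}$.

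I do not expect a genuine obstacle here: the content is entirely formal once Theorem~\bref{thm:tensor_decomposition} is available. The single point requiring care is verifying that $V^{m,n}_{0}$ and $V^{m,n}_{1}$ share no isomorphism type — that is, that \eqref{eq:traceless_subspace_decomposition} carries the \emph{full} multiplicities $d_{\mu\nu}$ and that the $\Lambda^{(r)}_{m,n}(N)$ are disjoint — because it is exactly this that prevents elements of $C_{m,n}(N)$ from mixing the two subspaces and thereby upgrades $\scrP_{m,n}$ from a mere idempotent to a central one.
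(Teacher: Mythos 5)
Your proof is correct and follows the same approach as the paper: the paper's argument, stated in the sentence immediately preceding the proposition, is precisely that the projector acts as the identity on the isotypic components indexed by $\Lambda^{(0)}_{m,n}(N)$ and annihilates the rest (via Theorem~\bref{thm:tensor_decomposition} and Lemma~\bref{lem:operator_A}), hence is a sum of primitive central idempotents of the semisimple algebra $C_{m,n}(N)$. You have simply spelled out the same argument in more detail, correctly identifying the key point that $V^{m,n}_{0}$ and $V^{m,n}_{1}$ are disjoint unions of full isotypic components.
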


    \paragraph{Casimir element.} For any $m,n \geqslant 0$ consider the following elements of $C_{m,n}(N)$:
    \begin{equation}\label{eq:JM}
    \def\arraystretch{1.4}
        \scrL_{m} = \left\{\begin{array}{rl}
            \displaystyle\sum_{1\leqslant \sfa < \sfb \leqslant m} \tau_{\sfa\sfb}, & m \geqslant 2 \\
            0, & \text{otherwise} 
        \end{array}
        \right.
        \quad\text{and}\quad \scrR_{n} = \left\{
        \begin{array}{rl}
            \displaystyle\sum_{1^{\prime}\leqslant \hsfa < \hsfb \leqslant n^{\prime}} \tau_{\hsfa\hsfb}, & n \geqslant 2 \\
            0, & \text{otherwise}
        \end{array}
        \right.
    \end{equation}
    and in the case when $m,n \geqslant 1$ consider
    \begin{equation}\label{eq:JMC}
        \scrC_{m,n} = \scrL_{m} + \scrR_{n} - \scrA_{m,n} + Nn,
    \end{equation}
    where the constant term is introduced for the sake of convenience. Behind the endomorphisms $\scrL_{m}$ and $\scrR_{n}$ one recognises the sum of the Jucys-Murphy elements in $\mathbb{C}\Sn{m}$ and $\mathbb{C}\Sn{n}$ respectively \cite{Jucys,Murphy}. The same observation holds for $\scrC_{m,n}$, where the ``background'' associative algebra is the walled Brauer algebra, and the analogues of the Jucyc-Murphy elements were introduced in \cite{JungKim_2020}. However in what follows we derive all necessary knowledge about \eqref{eq:JMC} directly from its action on tensors in combination with the classical facts about \eqref{eq:JM}.
    \vskip 2pt 
    
    Along the same lines as in the proof of \cite[Theorem 2.6]{Nazarov} one relates the homomorphism \eqref{eq:JMC} to a particular central element in $U(\mathfrak{gl}_N)$. Namely, consider the matrix units $\{E^{i}_{\phantom{i}j}\} \subset \mathfrak{gl}_N$ such that
    \begin{equation}
        E^{i}_{\phantom{i}j}(e_{k}) = -e_{j}\delta^{i}_{k} \quad \text{and}\quad E^{i}_{\phantom{i}j}(e^{k}) = \delta_{j}^{k} e^{i}.
    \end{equation}
    Recall that $V^{m,n}$ is a $U(\mathfrak{gl}_N)$-module where an element of $\mathfrak{gl}_N$ acts as a derivation, and consider the following Casimir element:
    \begin{equation}\label{eq:Casimir}
        \dfrac{1}{2} \big(E^{i}_{\phantom{i}j} E^{j}_{\phantom{i}i} + N E^{i}_{\phantom{i}i}\big) \in Z\big(U(\mathfrak{gl}_{N})\big).
    \end{equation}
    The following assertion is checked directly on the basis of $V^{m,n}$.
    \begin{lemma}\label{lem:Casimir}
    For any $m,n \geqslant 0$ and $N\geqslant 1$, for any $T \in V^{m,n}$ one has:
        \begin{equation}
        \def\arraystretch{1.4}
            \dfrac{1}{2} \big(E^{i}_{\phantom{i}j} E^{j}_{\phantom{i}i} + N E^{i}_{\phantom{i}i}\big)\, T = \left\{
                \begin{array}{rl}
                    \scrC_{m,n}(T)\,, & m,n\geqslant 1\\
                    \scrL_{m}(T)\,, & n = 0\\
                    \scrR_{n}(T) + Nn\,T\,, & m = 0
                \end{array}
            \right.
        \end{equation}
        In particular $\scrC_{m,n} \in Z\big(C_{m,n}(N)\big)$.
    \end{lemma}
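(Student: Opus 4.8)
The plan is to verify the identity slot-by-slot on the spanning set of simple tensors $\mathbf e = e_{i_1}\otimes\cdots\otimes e_{i_m}\otimes e^{j_1}\otimes\cdots\otimes e^{j_n}$, exploiting that every $E^i_j$ acts as a derivation. Writing $(E^i_j)_a$ for the action on the $a$-th of the $m+n$ tensor slots only, one has $E^i_j\,\mathbf e = \sum_a (E^i_j)_a\,\mathbf e$, hence (with the index sums over $i,j$ understood throughout) $E^i_j E^j_i\,\mathbf e = \sum_{a,b}(E^i_j)_a (E^j_i)_b\,\mathbf e$. I would split this double sum into the diagonal part $a=b$ and the off-diagonal part $a\neq b$, and evaluate each using only the defining relations $E^i_j(e_k)=-\delta^i_k e_j$ and $E^i_j(e^k)=\delta^k_j e^i$.

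For the diagonal part, a one-line check shows that $\sum_{i,j}(E^i_j E^j_i)_a$ acts as multiplication by $N$ on a $V$-slot and on a $V^*$-slot alike, so it contributes $N(m+n)\,\mathbf e$; adding the term $NE^i_i$, which rescales $V$-slots by $-1$ and $V^*$-slots by $+1$ and hence contributes $N(n-m)\,\mathbf e$, the two constant contributions combine to $2Nn\,\mathbf e$. For the off-diagonal part there are three types of pairs $(a,b)$: two $V$-slots, two $V^*$-slots, or one of each; summing $(E^i_j)_a(E^j_i)_b$ over $i,j$ returns the transposition $\tau_{\sfa\sfb}$, the transposition $\tau_{\hsfa\hsfb}$, and $-\tau_{\sfa\hsfb}$ respectively, and running over ordered pairs doubles each, so the off-diagonal part equals $2\scrL_m + 2\scrR_n - 2\scrA_{m,n}$ applied to $\mathbf e$. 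Multiplying the total by $\tfrac12$ gives $\scrL_m + \scrR_n - \scrA_{m,n} + Nn = \scrC_{m,n}$ for $m,n\geq 1$; the cases $n=0$ (only $V$-slots survive: $\tfrac12(2\scrL_m + Nm - Nm) = \scrL_m$) and $m=0$ (only $V^*$-slots survive: $\tfrac12(2\scrR_n + Nn + Nn) = \scrR_n + Nn$) come out of exactly the same bookkeeping.

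For the final assertion, note first that $\scrC_{m,n} = \scrL_m + \scrR_n - \scrA_{m,n} + Nn$ lies in $C_{m,n}(N)$ because each of $\tau_{\sfa\sfb}$, $\tau_{\hsfa\hsfb}$, $\tau_{\sfa\hsfb}$ does, by Theorem \ref{thm:cenraliser_generate}. On the other hand $\scrC_{m,n}$ is, by what we just proved, the image in $\mathrm{End}(V^{m,n})$ of the central element \eqref{eq:Casimir} of $U(\mathfrak{gl}_N)$; any $c\in C_{m,n}(N)$ commutes with the $GL(N)$-action, hence — by differentiating one-parameter subgroups — with the $\mathfrak{gl}_N$-action, and therefore with the whole image of $U(\mathfrak{gl}_N)$, in particular with $\scrC_{m,n}$. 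Thus $\scrC_{m,n}\in C_{m,n}(N)$ and commutes with all of it, i.e. $\scrC_{m,n}\in Z(C_{m,n}(N))$.

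I do not anticipate any conceptual obstacle: the whole lemma reduces to the Leibniz-rule expansion above, and the only thing requiring care is the index and sign bookkeeping in the three off-diagonal cases — in particular that the two minus signs in $E^i_j(e_k)=-\delta^i_k e_j$ cancel for a $V$--$V$ pair (giving $+\tau_{\sfa\sfb}$) while exactly one survives for a $V$--$V^*$ pair (giving $-\tau_{\sfa\hsfb}$), and that the constant $+Nn$ built into $\scrC_{m,n}$ is precisely what absorbs the diagonal contributions $N(m+n)+N(n-m)$.
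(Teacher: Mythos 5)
Your proof is correct and follows exactly the route the paper intends: the paper dismisses the lemma with the single phrase ``checked directly on the basis of $V^{m,n}$,'' and your slot-by-slot Leibniz expansion, the sign and index bookkeeping for the $V$--$V$, $V^*$--$V^*$ and $V$--$V^*$ pairs, and the accounting of the diagonal and $NE^{i}{}_{i}$ contributions into $2Nn$ are all accurate and are precisely that direct check. The centrality argument via the fact that $\scrC_{m,n}$ is the image of a central element of $U(\mathfrak{gl}_N)$ is likewise the intended reading of the paper's ``in particular.''
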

    \noindent With the aid of Lemma \bref{lem:Casimir} one proves the following theorem (see \cite[Theorem 2.6]{Nazarov} and recall Theorem \bref{thm:tensor_decomposition}).
    \begin{proposition}\label{prop:eigenvalues_Casimir}
        For any $m,n \geqslant 1$ and $N\geqslant 1$, let $(\mu,\nu) \in \Lambda_{m,n}(N)$. Then for any tensor $T\in V^{m,n}$ in the isotypic $GL(N)$-component $(\mu,\nu)$ one has
        \begin{equation*}
            \scrC_{m,n}(T) = \big(c(\mu) + c(\nu) + N |\nu|\big)\, T\,.
        \end{equation*}
    \end{proposition}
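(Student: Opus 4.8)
The plan is to recognise $\scrC_{m,n}$ as the action on $V^{m,n}$ of a quadratic Casimir element of $\mathfrak{gl}_N$, and to evaluate that element on a highest-weight vector. By Lemma~\bref{lem:Casimir}, for $m,n\geqslant 1$ the operator $\scrC_{m,n}$ coincides on $V^{m,n}$ with the action of the central element $\Omega=\tfrac12\big(E^{i}_{\phantom{i}j}E^{j}_{\phantom{i}i}+NE^{i}_{\phantom{i}i}\big)\in Z\big(U(\mathfrak{gl}_N)\big)$. Since $GL(N)$ is connected, a subspace of $V^{m,n}$ is $GL(N)$-invariant if and only if it is $\mathfrak{gl}_N$-invariant, and the rational irreducible $GL(N)$-modules are also irreducible as $\mathfrak{gl}_N$-modules; hence the decomposition of Theorem~\bref{thm:tensor_decomposition} is simultaneously a decomposition of $V^{m,n}$ into irreducible $\mathfrak{gl}_N$-modules, and by Schur's lemma $\Omega$ acts on the isotypic component of type $(\mu,\nu)$ by a single scalar $\omega_{\mu\nu}$. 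It therefore suffices to compute $\omega_{\mu\nu}$ on a highest-weight vector of $U^{(\mu,\nu)}$.

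The next step is the bookkeeping that relates $\Omega$ to the usual matrix units. Comparing the defining relations $E^{i}_{\phantom{i}j}(e_k)=-e_j\delta^{i}_{k}$ and $E^{i}_{\phantom{i}j}(e^{k})=\delta^{k}_{j}e^{i}$ with the matrix units $\{\widetilde E_{ab}\}$ realising the \emph{standard} $\mathfrak{gl}_N$-action on $V$ --- so that $\widetilde E_{ab}(e_c)=\delta_{bc}e_a$ and, contragrediently, $\widetilde E_{ab}(e^c)=-\delta_{ac}e^b$ --- one finds $E^{i}_{\phantom{i}j}=-\widetilde E_{ji}$ throughout $V^{m,n}$, so that $\Omega=\tfrac12\big(\sum_{i,j}\widetilde E_{ij}\widetilde E_{ji}-N\sum_i\widetilde E_{ii}\big)$ while the degree operator $\sum_i\widetilde E_{ii}$ acts on $U^{(\mu,\nu)}$ by $|\mu|-|\nu|$. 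With respect to this standard action, the rational irreducible $U^{(\mu,\nu)}$ has highest weight $\Lambda$ with $\Lambda_i=\mu_i$ for $1\leqslant i\leqslant\ell(\mu)$, $\Lambda_{N+1-j}=-\nu_j$ for $1\leqslant j\leqslant\ell(\nu)$, and $\Lambda_i=0$ otherwise; this is a genuine dominant integral weight precisely because $\ell(\mu)+\ell(\nu)\leqslant N$ for $(\mu,\nu)\in\Lambda_{m,n}(N)$.

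The computation itself is the classical one. On a highest-weight vector $v_\Lambda$ the raising operators $\widetilde E_{ij}$ with $i<j$ annihilate $v_\Lambda$, and using $[\widetilde E_{ij},\widetilde E_{ji}]=\widetilde E_{ii}-\widetilde E_{jj}$ one obtains $\sum_{i,j}\widetilde E_{ij}\widetilde E_{ji}\,v_\Lambda=\big(\sum_i\Lambda_i^2+\sum_{i<j}(\Lambda_i-\Lambda_j)\big)v_\Lambda=\sum_i\Lambda_i(\Lambda_i+N+1-2i)\,v_\Lambda$. Plugging in the explicit $\Lambda$, splitting the sum into the $\mu$-block ($i\leqslant\ell(\mu)$) and the $\nu$-block ($i\geqslant N+1-\ell(\nu)$), and using the elementary identity $2c(\alpha)=\sum_i\alpha_i(\alpha_i+1-2i)$ valid for any partition $\alpha$, this standard Casimir evaluates to $2c(\mu)+2c(\nu)+N(|\mu|+|\nu|)$. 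Since $\Omega$ equals $\tfrac12$ of this minus $\tfrac{N}{2}$ times the degree operator, and the degree acts by $|\mu|-|\nu|$, one gets $\omega_{\mu\nu}=\tfrac12\big(2c(\mu)+2c(\nu)+N(|\mu|+|\nu|)-N(|\mu|-|\nu|)\big)=c(\mu)+c(\nu)+N|\nu|$, which is the assertion.

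I expect the only genuinely delicate point to be the sign in $E^{i}_{\phantom{i}j}=-\widetilde E_{ji}$ --- equivalently, that $\sum_iE^{i}_{\phantom{i}i}$ acts on the $(\mu,\nu)$-component by $|\nu|-|\mu|$ rather than by $|\mu|-|\nu|$; it is exactly this sign that turns the symmetric-looking Casimir output into the asymmetric term $N|\nu|$, so it is worth spelling out with care. A convenient sanity check is $m=n=1$: on the traceless part of $V\otimes V^{*}$ one has $\scrA_{1,1}=0$, so $\scrC_{1,1}=N$, consistent with $c(\mu)=c(\nu)=0$ and $|\nu|=1$; on the trace part $(\mu,\nu)=(\varnothing,\varnothing)$ gives $0$, consistent with $\scrA_{1,1}$ acting by $N$ there. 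One could alternatively bypass highest-weight theory and evaluate $\scrC_{m,n}$ directly on an explicit Young-symmetrised tensor lying in the $(\mu,\nu)$-component, but that route is heavier and I would not recommend it.
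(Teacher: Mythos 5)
Your proof is correct, and it takes a genuinely different route from the paper. You identify $\scrC_{m,n}$ with the Casimir $\Omega$ via Lemma \bref{lem:Casimir}, translate the paper's matrix units to the standard ones through $E^{i}_{\phantom{i}j}=-\widetilde E_{ji}$, and then carry out the classical highest-weight evaluation of the quadratic Casimir on the rational irreducible with highest weight $(\mu_1,\ldots,\mu_{\ell(\mu)},0,\ldots,0,-\nu_{\ell(\nu)},\ldots,-\nu_1)$. The paper instead avoids highest-weight theory entirely: it works directly from $\scrC_{m,n}=\scrL_m+\scrR_n-\scrA_{m,n}+Nn$, first treats the traceless case $(\mu,\nu)\in\Lambda^{(0)}_{m,n}(N)$ where $\scrA_{m,n}$ vanishes and $\scrL_m$, $\scrR_n$ act by $c(\mu)$, $c(\nu)$ (the Jucys--Murphy content eigenvalue on the $\mathbb{C}[\Sn{m}\times\Sn{n}]$-module attached to the embedding, via Koike's Theorem 1.1 and Ram's seminormal form), and then reduces the case $r>0$ to $r=0$ by evaluating $\scrC_{m,n}$ on $W\otimes E^{\otimes r}$ with $W\subseteq V^{m-r,n-r}_0$ traceless, using $E^{i}_{\phantom{i}j}(e_k\otimes e^k)=0$. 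Your computation has the virtue of being self-contained within standard Lie theory and of making the asymmetric term $N|\nu|$ emerge from the sign convention in a transparent way — and you are right that this sign is the one delicate point. The paper's argument stays inside the combinatorial framework it has already built (contents, Jucys--Murphy elements, the invariant $E$) and so sidesteps the weight bookkeeping, at the cost of invoking the Koike embedding and Ram's content result; it is also the version that generalises cleanly to Proposition \bref{prop:eigenvalues_RL}, where the same Jucys--Murphy eigenvalue appears.
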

    \begin{proof}
        It suffices to check the assertion on an arbitrarily chosen tensor in each isotypic component. Let $(\mu,\nu)\in \Lambda^{(0)}_{m,n}(N)$, consider a $GL(N)$-invariant subspace of $V_{0}^{m,n}$ isomorphic to $U^{(\mu,\nu)}$ (recall Theorem \bref{thm:tensor_decomposition}). By \cite[Theorem 1.1]{Koike_89}, each embedding of $U^{(\mu,\nu)}$ in $V^{m,n}$ corresponds to a single vector in the irreducible representation of $\mathbb{C}[\Sn{m}\times\Sn{n}]$ labelled by $(\mu,\nu)$. Thus any tensor in the chosen subspace is an eigenvector of $\scrL_{m}$ and $\scrR_{n}$ with the eigenvalue $c(\mu)$ and $c(\nu)$ respectively \cite{Ram_seminormal}. Since $|\nu| = n$, the assertion follows by applying \eqref{eq:JMC} to a traceless tensor (an element of $U^{(\mu,\nu)}$).
        \vskip 2pt

        Now let $(\mu,\nu) \in \Lambda^{(r)}_{m,n}(N)$ for some $r \in \{1,\ldots,\min(m,n)\}$. Consider a $GL(N)$-invariant subspace $W\subseteq V^{m-r,n-r}$ isomorphic to $U^{(\mu,\nu)}$ and apply $\scrC_{m,n}$ to a tensor from $W\otimes E^{\otimes r}$. Note that $E^{i}_{\phantom{i}j}(e_{k}\otimes e^{k}) = 0$, so by Lemma \bref{lem:Casimir} one has
        \begin{equation}
            \scrC_{m,n}(W\otimes E^{\otimes r}) = \scrC_{m-r,n-r}(W)\otimes E^{\otimes r}.
        \end{equation}
        The subspace $W$ is traceless, so the assertion follows by the previous point.
    \end{proof}
    
    With the above result at hand, the problem of determining the spectrum of $\scrA_{m,n}$ reduces to determining the spectrum of $\scrL_{m}$ and $\scrR_{n}$ in $V^{m,n}$ (recall \eqref{eq:Casimir}). For this purpose one invokes the ``see-saw-dual'' Lie group and its centraliser algebra, see the diagram in Section \bref{sec:intro}.

    \paragraph{See-saw-dual pair.}Consider $V^{m,n} = V^{(m,0)}\otimes V^{(0,n)}$ as a representation of $GL(N)\times GL(N)$ where the left and the right components act independently in $V^{\otimes m}$ and $V^{*\otimes n}$: for any $R,S \in GL(N)$ 
        \begin{equation}\label{eq:GLGL_action}
            (R,S)\, (v_{1}\otimes\ldots \otimes v_{m}\otimes \varphi_{1} \otimes\ldots \otimes \varphi_{n}) = R(v_{1})\otimes\ldots \otimes R(v_{m})\otimes S(\varphi_{1}) \otimes\ldots \otimes S(\varphi_{n}).
        \end{equation}
    The centraliser algebra of the action of $GL(N)\times GL(N)$ is the image of $\mathbb{C}[\Sn{m}\times \Sn{n}]$ generated by the transpositions \eqref{eq:transpositions}. Recall that by the classical Schur-Weyl duality (recall Section \bref{sec:intro}), inequivalent irreducible representations of $\mathbb{C}\Sn{s}$ in $V^{\otimes s}$ (as well as in $V^{*\otimes s}$) are indexed by all partitions of $s$ with at most $N$ non-zero components \cite{Weyl}. For any partition $\rho\in\mathcal{P}_{s}(N)$ let $L^{(\rho)}$ denote the corresponding irreducible representation of $\mathbb{C}\Sn{s}$.  Then any irreducible representation of $\mathbb{C}[\Sn{m}\times \Sn{n}]$ in $V^{m,n}$ is equivalent to $L^{(\rho)}\otimes L^{(\sigma)}$, and thus is indexed by a pair of partitions $(\rho,\sigma) \in \mathcal{P}_{m,n}(N)$. 
    \vskip 2pt
        
    As a $(GL(N)\times GL(N), \mathbb{C}[\Sn{m}\times \Sn{n}])$-bimodule $V^{m,n}$ admits the following multiplicity-free decomposition:
        \begin{equation}\label{eq:decomposition_GLGL}
            \def\arraystretch{0.8}
            V^{m,n} \cong \bigoplus_{(\rho,\sigma)\in \mathcal{P}_{m,n}(N)}
            \big(U^{(\rho,\varnothing)}\otimes U^{(\varnothing,\sigma)}\big)\otimes \big(L^{(\rho)}\otimes L^{(\sigma)}\big).
        \end{equation}
    The following result follows (see \cite{Ram_seminormal} for the value of the sum of Jucyc-Murphy elements in an irreducible representation of the symmetric group).
        
    \begin{proposition}\label{prop:eigenvalues_RL}
        For any $m,n \geqslant 1$ and $N\geqslant 1$, let $(\rho,\sigma) \in \mathcal{P}_{m,n}(N)$. Then for any tensor $T\in V^{m,n}$ in the isotypic $GL(N)\times GL(N)$-component $(\rho,\sigma)$ one has
        \begin{equation*}
            \scrL_{m}(T) = c(\rho)\, T\quad \text{and}\quad \scrR_{n}(T) = c(\rho)\, T.
        \end{equation*}
    \end{proposition}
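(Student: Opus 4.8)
The plan is to reduce the statement to a classical computation with Jucys--Murphy elements of the symmetric group, using the $\big(GL(N)\times GL(N),\mathbb{C}[\Sn{m}\times\Sn{n}]\big)$-bimodule decomposition \eqref{eq:decomposition_GLGL}. First I would observe that $\scrL_m$ and $\scrR_n$, as defined in \eqref{eq:JM}, are sums of the transpositions \eqref{eq:transpositions} and therefore commute with the $GL(N)\times GL(N)$-action; hence each of them acts by a single scalar on every isotypic $GL(N)\times GL(N)$-component, and it is enough to evaluate that scalar on one nonzero tensor $T$ in the $(\rho,\sigma)$-component for each $(\rho,\sigma)\in\mathcal{P}_{m,n}(N)$.

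Next I would identify $\scrL_m$ with the image, under the Schur--Weyl homomorphism $\mathbb{C}\Sn{m}\to\mathrm{End}(V^{\otimes m})$, of the central element $\sum_{1\leqslant a<b\leqslant m}(a\,b)=\sum_{k=2}^{m}X_k$, where $X_k=\sum_{a<k}(a\,k)$ are the Jucys--Murphy elements \cite{Jucys,Murphy}; likewise $\scrR_n$ is the image of the analogous central element of $\mathbb{C}\Sn{n}$ acting on the covariant factors. The key input is then the classical fact (see \cite{Ram_seminormal}) that this central element acts on the irreducible module $L^{(\rho)}$ of $\mathbb{C}\Sn{m}$ by the scalar $\sum_{(i,j)\in\rho}(j-i)=c(\rho)$, the sum of the contents of $\rho$. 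By \eqref{eq:decomposition_GLGL}, in the $(\rho,\sigma)$-component the $m$ contravariant tensor factors carry precisely the $\mathbb{C}\Sn{m}$-module $L^{(\rho)}$, so $\scrL_m(T)=c(\rho)\,T$; an entirely parallel computation for the covariant factors and the endomorphism $\scrR_n$ yields $\scrR_n(T)=c(\rho)\,T$, which is the assertion. (Only those $(\rho,\sigma)$ with $\ell(\rho)\leqslant N$ and $\ell(\sigma)\leqslant N$ occur, so the Specht modules $L^{(\rho)},L^{(\sigma)}$ are indeed the ones appearing in $V^{\otimes m}$ and $V^{*\otimes n}$ and there is no degeneracy issue.)

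I do not expect any serious obstacle here: once $\scrL_m$ and $\scrR_n$ are recognised as images of sums of Jucys--Murphy elements, the result follows immediately from \cite{Ram_seminormal}, and --- in contrast with what is needed for the full algebra $C_{m,n}(N)$ or for the walled Brauer algebra --- no semisimplicity hypothesis enters, nor is Theorem \bref{thm:tensor_decomposition} required, only the multiplicity-free decomposition \eqref{eq:decomposition_GLGL}. The one point that must be handled carefully is the bookkeeping of conventions: matching the sign convention for the content $c(\cdot)$ fixed in Section \bref{sec:intro_notations} with the eigenvalue of a sum of Jucys--Murphy elements, and matching the labelling of the two symmetric-group factors appearing in \eqref{eq:decomposition_GLGL} with the partitions $\rho$ and $\sigma$ attached to the corresponding $GL(N)\times GL(N)$-module $U^{(\rho,\varnothing)}\otimes U^{(\varnothing,\sigma)}$.
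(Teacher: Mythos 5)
Your proposal is correct and takes the same route as the paper, which simply invokes the bimodule decomposition \eqref{eq:decomposition_GLGL} and cites \cite{Ram_seminormal} for the scalar by which a sum of Jucys--Murphy elements acts on $L^{(\rho)}$. One small point you should have flagged: your own reasoning (``an entirely parallel computation for the covariant factors'') gives $\scrR_n(T) = c(\sigma)\,T$, not $c(\rho)\,T$ --- the statement as printed contains a typo, and $c(\sigma)$ is indeed the value needed to make the downstream computation in Theorem~\bref{thm:eigenvalues} produce $rN + c(\rho\slash\mu) + c(\sigma\slash\nu)$; you reproduced the typo in your last displayed equation instead of correcting it.
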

    \noindent With the Propositions \bref{prop:eigenvalues_Casimir} and \bref{prop:eigenvalues_RL} at hand we are in a position to describe $\mathrm{spec}(\scrA_{m,n})$.

    \paragraph{Eigenvalues of $\scrA_{m,n}$.} From \eqref{eq:Casimir} one expresses
    \begin{equation}
        \scrA_{m,n} = \scrL_{m} + \scrR_{n} - \scrC_{m,n} + nN.
    \end{equation}
    The spectra of the operators on the right-hand side of the above expression are given by Propositions \bref{prop:eigenvalues_Casimir} and \bref{prop:eigenvalues_RL}, so one is left to relate the direct-sum decompositions considered therein.
    \vskip 2pt
    
    The decomposition of the restriction to the diagonal subgroup $U^{(\rho,\varnothing)}\otimes U^{(\varnothing,\sigma)}\big\downarrow{}_{GL(N)}$ into irreducible representations $U^{(\mu,\nu)}$ is expressed via irreducible characters as in \eqref{eq:LR_ext_particular} (see \cite[Proposition 2.4 and Theorem 3.6]{Stembridge_87} as well as \cite[Corollary 2.3.1 and Proposition 2.7]{Koike_89}), so that the corresponding multiplicity is
    \begin{equation}\label{eq:branching_GLGL-GL}
        \dim \mathrm{Hom}_{GL(N)}\big(U^{(\mu,\nu)},U^{(\rho,\varnothing)}\otimes U^{(\varnothing,\sigma)}\big) = c^{\mu\nu}_{\rho\sigma}(N).
    \end{equation}
    
    \noindent The set of egenvalues of $\scrA_{m,n}$ is thus summarised via the following theorem.
    
    \begin{theorem}\label{thm:eigenvalues}
        For any $m,n \geqslant 1$ and $N \geqslant 1$, the elements $a \in \mathrm{spec}(\scrA_{m,n})$ are exhausted by integers of the form
        \begin{equation}\label{eq:eigenvalue}
            a = rN + c(\rho\slash\mu) + c(\sigma\slash\nu)
        \end{equation}
        for all $r \in \{0,1,\ldots, \min(m,n)\}$, $(\rho,\sigma) \in \mathcal{P}_{m,n}(N)$ and $(\mu,\nu) \in \Lambda^{(r)}_{m,n}(N)$ such that $c^{\rho\sigma}_{\mu\nu}(N) \neq 0$. In particular, $a = 0$ if and only if $r = 0$.
    \end{theorem}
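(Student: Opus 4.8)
The plan is to extract the spectrum from the rearrangement of \eqref{eq:JMC},
\[
    \scrA_{m,n} = \scrL_{m} + \scrR_{n} - \scrC_{m,n} + nN,
\]
by exhibiting a direct-sum decomposition of $V^{m,n}$ on whose summands all three operators on the right-hand side act as scalars. The operators $\scrL_{m}$ and $\scrR_{n}$ lie in the centraliser of $GL(N)\times GL(N)$, hence are scalar on each isotypic component of the $GL(N)\times GL(N)$-decomposition \eqref{eq:decomposition_GLGL}; whereas $\scrC_{m,n}\in Z\big(C_{m,n}(N)\big)$ by Lemma \bref{lem:Casimir}, hence is scalar on each isotypic component of the diagonal $GL(N)$-decomposition \eqref{eq:decomposition_GL}. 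First I would form the common refinement: for $(\rho,\sigma)\in\mathcal{P}_{m,n}(N)$ and $(\mu,\nu)\in\Lambda_{m,n}(N)$ let $W_{(\rho,\sigma),(\mu,\nu)}$ be the intersection of the $(\rho,\sigma)$-isotypic $GL(N)\times GL(N)$-component with the $(\mu,\nu)$-isotypic diagonal $GL(N)$-component. Since the diagonal $GL(N)$ is a subgroup of $GL(N)\times GL(N)$, restricting each $GL(N)\times GL(N)$-isotypic summand of $V^{m,n}$ to the diagonal and splitting it into $GL(N)$-isotypic pieces reproduces precisely these intersections; hence $V^{m,n}=\bigoplus_{(\rho,\sigma),(\mu,\nu)}W_{(\rho,\sigma),(\mu,\nu)}$, and by \eqref{eq:branching_GLGL-GL} (together with the fact that the multiplicity space $L^{(\rho)}\otimes L^{(\sigma)}$ is a trivial $GL(N)\times GL(N)$-module) the summand $W_{(\rho,\sigma),(\mu,\nu)}$ is nonzero exactly when $U^{(\mu,\nu)}$ occurs in $U^{(\rho,\varnothing)}\otimes U^{(\varnothing,\sigma)}$ restricted to the diagonal, i.e.\ exactly when the structure constant $c^{\mu\nu}_{\rho\sigma}(N)$ of \eqref{eq:LR_ext_particular} is nonzero.

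Next I would evaluate the scalar by which $\scrA_{m,n}$ acts on a nonzero $W_{(\rho,\sigma),(\mu,\nu)}$. By Proposition \bref{prop:eigenvalues_RL}, $\scrL_{m}$ and $\scrR_{n}$ act there as $c(\rho)$ and $c(\sigma)$; by Proposition \bref{prop:eigenvalues_Casimir}, $\scrC_{m,n}$ acts as $c(\mu)+c(\nu)+N|\nu|$; hence
\[
    \scrA_{m,n}\big|_{W_{(\rho,\sigma),(\mu,\nu)}} = c(\rho)+c(\sigma)-c(\mu)-c(\nu)-N|\nu|+nN .
\]
Since $W_{(\rho,\sigma),(\mu,\nu)}\neq 0$ forces $U^{(\mu,\nu)}$ to occur in $V^{m,n}$, there is a unique $r\in\{0,1,\ldots,\min(m,n)\}$ with $(\mu,\nu)\in\Lambda^{(r)}_{m,n}(N)$; then $|\nu|=n-r$ and $-N|\nu|+nN=rN$. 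It remains to observe that non-vanishing of $c^{\mu\nu}_{\rho\sigma}(N)$ forces $\mu\subseteq\rho$, $\nu\subseteq\sigma$ and $|\rho/\mu|=|\sigma/\nu|=r$ — this can be read off from the Littlewood--Richardson description \eqref{eq:multiplicity_LR_coef} — so that $c(\rho)-c(\mu)=c(\rho/\mu)$ and $c(\sigma)-c(\nu)=c(\sigma/\nu)$, and the scalar becomes $rN+c(\rho/\mu)+c(\sigma/\nu)$, which is \eqref{eq:eigenvalue}. As $\scrA_{m,n}$ is scalar on every summand of a decomposition of $V^{m,n}$, it is diagonalisable (in accordance with Lemma \bref{lem:operator_A}) and $\mathrm{spec}(\scrA_{m,n})$ is exactly the set of these scalars, ranging over all nonzero $W_{(\rho,\sigma),(\mu,\nu)}$; this is the claimed description.

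For the final assertion, if $r=0$ then $\mu\subseteq\rho$ with $|\mu|=|\rho|$ forces $\mu=\rho$, and likewise $\nu=\sigma$, so both skew-shapes are empty and $a=0$. For the converse I would avoid content estimates: by Lemma \bref{lem:operator_A}(2) one has $\ker\scrA_{m,n}=V^{m,n}_{0}$, and by \eqref{eq:traceless_subspace_decomposition} the traceless subspace is the sum of the diagonal $GL(N)$-isotypic components labelled by $\Lambda^{(0)}_{m,n}(N)$; so if $r\geq 1$ the summand $W_{(\rho,\sigma),(\mu,\nu)}$ lies inside an isotypic component intersecting $V^{m,n}_{0}$ trivially, whence $\scrA_{m,n}$ is injective on that component and its eigenvalue on $W_{(\rho,\sigma),(\mu,\nu)}$ is nonzero. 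The step I expect to require the most care is the first paragraph — verifying that the two isotypic decompositions really do refine to a direct sum whose nonzero pieces are governed by the non-vanishing branching numbers, and, at finite $N$, pinning down the containments $\mu\subseteq\rho$, $\nu\subseteq\sigma$ together with $|\rho/\mu|=|\sigma/\nu|=r$ implicit in \eqref{eq:multiplicity_LR_coef}; once these are settled, the remaining arithmetic with contents is routine.
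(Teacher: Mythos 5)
Your argument reproduces the paper's proof: decompose $V^{m,n}$ first into $GL(N)\times GL(N)$-isotypic components and then into diagonal $GL(N)$-isotypic components (your common refinement is precisely the paper's two-step restriction), apply Propositions~\ref{prop:eigenvalues_RL} and~\ref{prop:eigenvalues_Casimir}, and rearrange the Casimir relation \eqref{eq:JMC}. You are a bit more explicit than the paper on the ``$a=0\Leftrightarrow r=0$'' claim, and you gloss slightly over why $c^{\mu\nu}_{\rho\sigma}(N)\neq 0$ forces $\mu\subseteq\rho$ and $\nu\subseteq\sigma$ --- this follows most cleanly from the bound $c^{\mu\nu}_{\rho\sigma}(N)\leqslant\sum_{\beta} c^{\rho}_{\mu\beta}c^{\sigma}_{\nu\beta}$ of Lemma~\ref{lem:estimate_multiplicities} rather than directly from \eqref{eq:multiplicity_LR_coef} --- but the paper's own terse proof leaves this equally implicit.
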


    \begin{proof}
         First, decompose $V^{m,n}$ into irreducible $GL(N)\times GL(N)$-components $U^{(\rho,\varnothing)}\otimes U^{(\varnothing,\sigma)}$ for all $(\rho,\sigma)\in \mathcal{P}_{m,n}(N)$, and apply Proposition \bref{prop:eigenvalues_RL}. Then restrict each component to the diagonal action of $GL(N)$, so that $U^{(\mu,\nu)}$ with some $(\mu,\nu) \in\Lambda_{m,n}(N)$ occurs in $U^{(\rho,\varnothing)}\otimes U^{(\varnothing,\sigma)}\big\downarrow{}_{GL(N)}$ iff $c^{\rho\sigma}_{\mu\nu}(N) \neq 0$. For each occurrence of $U^{(\mu,\nu)}$ apply Proposition \bref{prop:eigenvalues_Casimir}. 
    \end{proof}

    \noindent Note that the steps of the above proof reproduce the algorithm presented below \eqref{eq:intro_traceless_projector_factorised}.

    \subsection{Further discussion of the construction}\label{sec:projector_further}

    \paragraph{Restricted traceless projectors.} In practice one can be interested in constructing the traceless projection of a tensor with particular permutation symmetries of covariant and contravariant indices. In this case, we show that constructing the traceless projection by applying \eqref{eq:intro_traceless_projector_factorised} requires a smaller number of factors.
    \vskip 2pt
    
    Fix $(\rho,\sigma)\in \mathcal{P}_{m,n}(N)$ and consider a $GL(N)\times GL(N)$-invariant subspace 
    \begin{equation}\label{eq:subset_GLGL}
        W \subset V^{m,n}\quad \text{such that}\quad W \cong U^{(\rho,\varnothing)}\otimes U^{(\varnothing,\sigma)}.
    \end{equation}
    By the classical Schur-Weyl duality, an irreducible representations of $GL(N)$ in $V^{\otimes m}$ (respectively, in $V^{*\otimes n}$) results from applying a primitive idempotent in $\mathbb{C}\Sn{m}$ (respectively, in $\mathbb{C}\Sn{n}$)\footnote{A particular well-known example of primitive idempotents in the symmetric group algebras is given by Young symmetrisers \cite{Weyl}. For the construction of a complete set of primitive orthogonal idempotents see \cite{Ram_seminormal} and references therein.}, so the choice of the above subspace is not unique due to the following multiplicity, recall \eqref{eq:decomposition_GLGL}:
    \begin{equation}
        \dim \mathrm{Hom}_{GL(N)\times GL(N)}(U^{(\rho,\varnothing)}\otimes U^{(\varnothing,\sigma)}, V^{m,n}) = \dim L^{(\rho)}\cdot \dim L^{(\sigma)},
    \end{equation}
    where $\dim L^{(\rho)}$ (respectively, $\dim L^{(\rho)}$) equals the number of standard tableaux of shape $\rho$ (respectively, $\sigma$).
    \vskip 2pt
    
    We are interested in constructing the traceless projection of the subspace $W \subseteq V^{m,n}$ with a smaller number of factors in \eqref{eq:intro_traceless_projector_factorised}. Since each factor is associated with an eigenvalue of $\scrA_{m,n}$ we aim at determining the {\it minimal} subset
    \begin{equation}
        I(\rho,\sigma) \subseteq \mathrm{spec}(\scrA_{m,n})
    \end{equation}
    such that the operator
    \begin{equation}\label{eq:traceless_projector_restricted}
        \scrP^{(\rho,\sigma)}_{m,n} = \prod_{a\in I(\rho,\sigma)\backslash \{0\}} \left(1 - \dfrac{1}{a} \scrA_{m,n}\right)
    \end{equation}
    performs the traceless projection of any subspace \eqref{eq:subset_GLGL}.
    
    \begin{theorem}\label{thm:eigenvalues_restricted}
        For any $m,n\geqslant 1$ and $N\geqslant 1$, for any $(\rho,\sigma) \in \mathcal{P}_{m,n}(N)$ in \eqref{eq:traceless_projector_restricted} the subset $I(\rho,\sigma) \subseteq \mathrm{spec}(\scrA_{m,n})^{\times}$ is constituted by integers of the form
        \begin{equation}\label{eq:eigenvalue_restricted}
            a = Nr + c(\rho\slash\mu) + c(\sigma\slash\nu)
        \end{equation}
    for all $r \in \{0,\ldots,\min(m,n)\}$ and $(\mu,\nu) \in \Lambda^{(r)}_{m,n}(N)$ such that $c_{\rho\sigma}^{\mu\nu}(N) \neq 0$.
    \end{theorem}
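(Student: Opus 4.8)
The plan is to compute the action of $\scrA_{m,n}$ on a subspace $W$ of the form \eqref{eq:subset_GLGL} explicitly, and then to read off exactly which factors of \eqref{eq:intro_traceless_projector_factorised} are needed to annihilate the non-traceless part of $W$ while leaving its traceless part fixed. The only ingredients are the branching multiplicity \eqref{eq:branching_GLGL-GL} together with Propositions \bref{prop:eigenvalues_RL} and \bref{prop:eigenvalues_Casimir}, which between them diagonalise $\scrA_{m,n}$ via the identity $\scrA_{m,n} = \scrL_{m} + \scrR_{n} - \scrC_{m,n} + nN$ (see \eqref{eq:JMC}).

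First I would restrict $W \cong U^{(\rho,\varnothing)}\otimes U^{(\varnothing,\sigma)}$ to the diagonal copy of $GL(N)$. By \eqref{eq:branching_GLGL-GL} this produces $W \cong \bigoplus_{(\mu,\nu)}\bigl(U^{(\mu,\nu)}\bigr)^{\oplus c^{\mu\nu}_{\rho\sigma}(N)}$, the sum running over those $(\mu,\nu)\in \Lambda_{m,n}(N)$ with $c^{\mu\nu}_{\rho\sigma}(N)\neq 0$. On the $U^{(\mu,\nu)}$-isotypic summand, Proposition \bref{prop:eigenvalues_RL} gives $\scrL_{m} = c(\rho)$ and $\scrR_{n} = c(\sigma)$, both acting as scalars on all of $W$, while Proposition \bref{prop:eigenvalues_Casimir} gives $\scrC_{m,n} = c(\mu)+c(\nu)+N|\nu|$; since $|\nu| = n-r$ with $r = m-|\mu| = n-|\nu|$, the constants combine to $nN - N|\nu| = Nr$, so $\scrA_{m,n}$ acts on that summand by the single scalar $Nr + c(\rho\slash\mu) + c(\sigma\slash\nu)$. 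By Theorem \bref{thm:eigenvalues} this scalar vanishes precisely when $r = 0$; combined with $\ker\scrA_{m,n} = V^{m,n}_0$ from Lemma \bref{lem:operator_A}, it follows that the $r=0$ summand is exactly the traceless part $W_{0} := W\cap V^{m,n}_{0}$ of $W$, and the direct sum of the summands with $r\geqslant 1$ is exactly $W\cap V^{m,n}_{1}$.

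With this in hand both assertions are immediate. Each factor $\bigl(1 - a^{-1}\scrA_{m,n}\bigr)$ acts as the identity on $W_{0}$, where $\scrA_{m,n}=0$; and on a summand with $r\geqslant 1$ the factor indexed by $a = Nr + c(\rho\slash\mu) + c(\sigma\slash\nu)\in I(\rho,\sigma)\backslash\{0\}$ annihilates it. Hence $\scrP^{(\rho,\sigma)}_{m,n}$ restricts on $W$ to the projection onto $W_{0}$ with kernel $W\cap V^{m,n}_{1}$, that is, to the traceless projection of $W$ in the sense of \eqref{eq:traceless_projector}; and this holds for every subspace $W$ of the form \eqref{eq:subset_GLGL}, since all such are isomorphic as $GL(N)$-modules on which $\scrA_{m,n}$ acts by the same scalars on the isotypic components. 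Conversely, if some $a_{0}$ is deleted from $I(\rho,\sigma)\backslash\{0\}$, then on the corresponding isotypic summand — nonzero because $c^{\mu\nu}_{\rho\sigma}(N)\neq 0$, and lying in $V^{m,n}_{1}$ because $r\geqslant 1$ — the remaining product acts by the scalar $\prod_{a\neq a_{0}}\bigl(1 - a_{0}a^{-1}\bigr)$, which is nonzero because the elements of $I(\rho,\sigma)\backslash\{0\}$ are pairwise distinct; thus the truncated operator fails to annihilate a non-traceless tensor in $W$, and $I(\rho,\sigma)$ is minimal.

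I do not expect a genuine obstacle here: the argument is a bookkeeping recombination of results already in place. The one point deserving care is the identification of $W_{0}$ with the $r=0$ part of the diagonal restriction of $W$, which rests on the statement in Theorem \bref{thm:tensor_decomposition} that $U^{(\mu,\nu)}$ with $(\mu,\nu)\in\Lambda^{(0)}_{m,n}(N)$ occurs with the same multiplicity $d_{\mu\nu}$ in $V^{m,n}$ and in $V^{m,n}_{0}$, so that no such isotypic component can spill over into $V^{m,n}_{1}$ (equivalently, on the assertion ``$a=0$ iff $r=0$'' of Theorem \bref{thm:eigenvalues}).
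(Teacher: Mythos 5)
Your proof is correct and takes essentially the same approach as the paper: decompose $W$ into $GL(N)$-irreducibles via the branching rule \eqref{eq:branching_GLGL-GL}, use Propositions \ref{prop:eigenvalues_RL} and \ref{prop:eigenvalues_Casimir} to compute the scalar action of $\scrA_{m,n}$ on each isotypic component, and check that \eqref{eq:traceless_projector_restricted} projects onto the traceless part and that no factor can be dropped. You spell out the minimality step (the truncated product acts by a nonzero scalar on the omitted eigenspace) more explicitly than the paper, which merely notes that no element of $I(\rho,\sigma)\backslash\{0\}$ can be omitted.
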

    \begin{proof}
        Let $W\subseteq V^{m,n}$ be as in \eqref{eq:subset_GLGL}. Then $U^{(\mu,\nu)}$ with $(\mu,\nu)\in \Lambda_{m,n}(N)$ occurs in $U^{(\rho,\varnothing)}\otimes U^{(\varnothing,\sigma)}\big\downarrow{}_{GL(N)}$ whenever $c_{\rho\sigma}^{\mu\nu}(N) \neq 0$, and restriction of $\scrA_{m,n}$ to any such subspace equals the identity operator times the eigenvalue \eqref{eq:eigenvalue_restricted} (recall the proof of Theorem \bref{thm:eigenvalues}). Note in particular, that neither of the elements of $I(\rho,\sigma)\backslash \{0\}$ can be omitted, so the latter subset is indeed minimal as required.
    \end{proof}

    To construct the set $I(\rho,\sigma)$ one can apply the algorithm below \eqref{eq:intro_traceless_projector_factorised} by reducing the {\it Step 1} to the only pair $(\rho,\sigma)$. As a result, the eigenvalues of $\scrA_{m,n}$ can be organized as follows:
    \begin{equation}
        \mathrm{spec}(\scrA_{m,n}) = \bigcup_{(\rho,\sigma) \in \mathcal{P}_{m,n}(N)} I(\rho,\sigma).
    \end{equation}

    As an immediate corollary of Theorem \bref{thm:eigenvalues_restricted}, it is straightforward to generalise \eqref{eq:traceless_projector_restricted} to a direct sum of $GL(N)\times GL(N)$-invariant subspaces. For any $X \subseteq \mathcal{P}_{m,n}(N)$ denote $\displaystyle I(X) = \bigcup_{(\rho,\sigma) \in X} I(\rho,\sigma)$ and define
    \begin{equation}\label{eq:traceless_projector_restricted_X}
        \scrP_{m,n}^{(X)} = \prod_{a \in I(X)\backslash \{0\}} \left(1 - \dfrac{1}{a} \scrA_{m,n}\right).
    \end{equation}
        
    \begin{corollary}
        For any $m,n \geqslant 1$ and $N\geqslant 1$, for any $X \subseteq \mathcal{P}_{m,n}(N)$ let $W \subseteq V^{m,n}$ be a $GL(N)\times GL(N)$-invariant subspace such that
        \begin{equation}
            W \cong \bigoplus_{(\rho,\sigma) \in X} \big(U^{(\rho,\varnothing)}\otimes U^{(\varnothing,\sigma)} \big)^{\oplus k_{\rho\sigma}}\quad \text{(for some $k_{\rho\sigma} \geqslant 1$)}.
        \end{equation}
        Then the traceless projection $\scrP_{m,n} W \subseteq V^{m,n}_{0}$ results from applying \eqref{eq:traceless_projector_restricted_X}. More to that, none of the factors in \eqref{eq:traceless_projector_restricted_X} can be omitted.
    \end{corollary}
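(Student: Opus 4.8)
This corollary is obtained by feeding the $GL(N)\times GL(N)$-decomposition of $W$ into the mechanism of Theorem \bref{thm:eigenvalues_restricted}, so the plan is almost entirely bookkeeping, organised around one structural observation. Recall from the proofs of Propositions \bref{prop:eigenvalues_Casimir} and \bref{prop:eigenvalues_RL} that $\scrA_{m,n} = \scrL_{m} + \scrR_{n} - \scrC_{m,n} + nN$ acts as a scalar on every ``bi-slice'' of $V^{m,n}$ --- by which I mean the $U^{(\mu,\nu)}$-isotypic component, under the diagonal $GL(N)$, of a fixed $GL(N)\times GL(N)$-isotypic component $V^{m,n}[(\rho,\sigma)]$: on $V^{m,n}[(\rho,\sigma)]$ one has $\scrL_{m} = c(\rho)$ and $\scrR_{n} = c(\sigma)$ by Proposition \bref{prop:eigenvalues_RL}, while $\scrC_{m,n}$, being central in $C_{m,n}(N)$ by Lemma \bref{lem:Casimir}, is multiplication by $c(\mu) + c(\nu) + N|\nu|$ on the $U^{(\mu,\nu)}$-part by Proposition \bref{prop:eigenvalues_Casimir}; hence $\scrA_{m,n}$ is there the scalar $Nr + c(\rho\slash\mu) + c(\sigma\slash\nu)$ of \eqref{eq:eigenvalue_restricted}. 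Moreover $\scrA_{m,n}$ preserves each $V^{m,n}[(\rho,\sigma)]$, since the latter is the image of the central idempotent of $\mathbb{C}[\Sn{m}\times\Sn{n}]$ attached to $(\rho,\sigma)$ and $\scrA_{m,n}$ commutes with $\mathbb{C}[\Sn{m}\times\Sn{n}]$.

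Now I would apply this to $W$. Being $GL(N)\times GL(N)$-invariant and isomorphic to $\bigoplus_{(\rho,\sigma)\in X}(U^{(\rho,\varnothing)}\otimes U^{(\varnothing,\sigma)})^{\oplus k_{\rho\sigma}}$, the space $W$ is the direct sum of the intersections $W\cap V^{m,n}[(\rho,\sigma)]$ over $(\rho,\sigma)\in X$, and, restricted to the diagonal $GL(N)$, each intersection is a direct sum of subspaces of bi-slices. Since $\scrA_{m,n}$ is scalar on each ambient bi-slice it stabilises $W$, and its eigenvalues on $W$ are exactly the numbers $Nr + c(\rho\slash\mu) + c(\sigma\slash\nu)$ with $(\rho,\sigma)\in X$ (and $k_{\rho\sigma}\geqslant 1$) and $(\mu,\nu)\in\Lambda^{(r)}_{m,n}(N)$ such that $c^{\mu\nu}_{\rho\sigma}(N)\neq 0$; that is, $\mathrm{spec}(\scrA_{m,n}|_{W}) = I(X)\cup\{0\}$, where the eigenvalue $0$ comes precisely from the $r=0$ bi-slices, which together form $W\cap V^{m,n}_{0}$ by Theorem \bref{thm:tensor_decomposition}. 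Therefore $\scrP_{m,n}^{(X)} = \prod_{a\in I(X)\backslash\{0\}}\big(1 - \tfrac{1}{a}\scrA_{m,n}\big)$ annihilates every bi-slice of $W$ with $r\geqslant 1$ and restricts to the identity on the bi-slices with $r = 0$; since $W = (W\cap V^{m,n}_{0})\oplus(W\cap V^{m,n}_{1})$ this says exactly that $\scrP_{m,n}^{(X)}$ restricted to $W$ is the projection onto $W\cap V^{m,n}_{0} = \scrP_{m,n}W$.

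For the final assertion, I would fix $a\in I(X)\backslash\{0\}$ and choose $(\rho,\sigma)\in X$ and $(\mu,\nu)$ realising $a$ with $c^{\mu\nu}_{\rho\sigma}(N)\neq 0$ and $r\geqslant 1$; as $k_{\rho\sigma}\geqslant 1$ the corresponding bi-slice of $W$ is nonzero and $\scrA_{m,n}$ acts on it by $a$, so dropping the factor $\big(1 - \tfrac{1}{a}\scrA_{m,n}\big)$ leaves an operator acting there as $\prod_{a'\in I(X)\backslash\{0,a\}}(1 - a/a')$, which is nonzero because the eigenvalues in $I(X)$ are pairwise distinct; hence the reduced product does not annihilate this non-traceless part and cannot be the traceless projection on $W$. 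In this whole argument the only genuinely non-formal point --- the one I would isolate as a lemma if needed --- is the observation of the first paragraph: $\scrA_{m,n}$ is not a $GL(N)\times GL(N)$-intertwiner, yet it is scalar on every bi-slice and therefore stabilises the $GL(N)\times GL(N)$-submodule $W$ with the spectrum dictated by Theorem \bref{thm:eigenvalues_restricted}; everything else is bookkeeping over the two compatible decompositions.
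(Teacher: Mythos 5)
Your proof is correct and follows exactly the route the paper intends: the paper gives no explicit argument, calling the statement ``an immediate corollary of Theorem \ref{thm:eigenvalues_restricted}'', and your write-up is precisely the careful unpacking of that claim (decompose $W$ by $(\rho,\sigma)$, restrict each piece to the diagonal, observe $\scrA_{m,n}$ is scalar on each bi-slice, collect the eigenvalues into $I(X)$, and argue minimality by exhibiting for each $a\in I(X)\backslash\{0\}$ a nonzero non-traceless bi-slice on which the reduced product fails to vanish). One tiny imprecision worth noting: you write $\mathrm{spec}(\scrA_{m,n}|_W) = I(X)\cup\{0\}$, but in fact $\mathrm{spec}(\scrA_{m,n}|_W) = I(X)$ exactly (if every $(\rho,\sigma)\in X$ has $\ell(\rho)+\ell(\sigma)>N$ then $0\notin I(X)$ and also $W\cap V_0^{m,n}=\{0\}$, so $0$ is genuinely absent from the spectrum); this does not affect any step of the argument.
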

    
    To demonstrate how Theorem \bref{thm:eigenvalues_restricted} applies, let us revisit the example \eqref{eq:traceless_projector_21} of the traceless projector for $V^{2,1}$. By the classical Schur-Weyl duality, the subspace of symmetric tensors $V^{\odot 2} \subseteq V^{\otimes 2}$ is an irreducible representation of $GL(N)$ equivalent to $\ytableausetup{boxsize=5pt} U^{(\ydiagram{2},\varnothing)}$, so Theorem \bref{thm:eigenvalues_restricted} applies to the subspace $V^{\odot 2} \otimes V^{*} \subseteq V^{2,1}$, in which case one has $\ytableausetup{boxsize=5.5pt} \rho = \ydiagram{2}$ and $\ytableausetup{boxsize=5.5pt} \sigma = \ydiagram{1}$. By repeating the same steps as above \eqref{eq:traceless_projector_21} one finds for all $N \geqslant 1$
    \begin{equation}
        \ytableausetup{boxsize=5.5pt} I(\ydiagram{2},\ydiagram{1})\backslash \{0\}= \{N+1\}.
    \end{equation}
    In accordance with \eqref{eq:traceless_projector_21_s}, the traceless projection of $V^{\odot 2} \otimes V^{*}$ can be constructed by applying
    \begin{equation*}
        \ytableausetup{boxsize=4pt}\scrP^{(\ydiagram{2},\ydiagram{1})}_{2,1} = 1 - \dfrac{1}{N+1} \scrA_{2,1}
    \end{equation*}
    instead of \eqref{eq:traceless_projector_21}.
    \vskip 2pt

    Similarly, suppose $N\geqslant 2$ and consider the subspace of anti-symmetric tensors $\bigwedge^{2} V \subseteq V^{\otimes 2}$, which is an irreducible representation of $GL(N)$ equivalent to $\ytableausetup{boxsize=5pt} U^{(\ydiagram{1,1},\varnothing)}$. The subspace $\big(\bigwedge^2 V\big) \otimes V^{*} \subseteq V^{2,1}$ corresponds to the choice $\ytableausetup{boxsize=5.5pt} \rho = \ydiagram{1,1}$ and $\ytableausetup{boxsize=5.5pt} \sigma = \ydiagram{1}$, in which case one has 
    \begin{equation}
        \ytableausetup{boxsize=5.5pt} I(\ydiagram{1,1},\ydiagram{1})\backslash \{0\} = \{N-1\}.
    \end{equation}
    In accordance with \eqref{eq:traceless_projector_21_a}, the traceless projection of $\big(\bigwedge^2 V\big) \otimes V^{*} \subseteq V^{2,1}$ can be constructed by applying
    \begin{equation}\label{eq:traseless_projector_torsion}
        \ytableausetup{boxsize=4pt}\scrP^{(\ydiagram{1,1},\ydiagram{1})}_{2,1} = 1 - \dfrac{1}{N-1} \scrA_{2,1}
    \end{equation}
    instead of \eqref{eq:traceless_projector_21}.
    \vskip 2pt

    In relation to the traceless projection of the Riemann tensor discussed in Section \bref{sec:intro_applications}, consider $m = 3$, $n = 1$ and $N\geqslant 3$, and take $W =  \big( V \wedge V\big) \otimes V\otimes V^{*}$, so that components of any tensor $T \in W$ are anti-symmetric with respect to the transposition of the first two indices: $t^{ijk}{}_{l} = - t^{jik}{}_{l}$. The above subspace is a representation of $GL(N)\times GL(N)$ such that
    \begin{equation}
        \ytableausetup{boxsize=4pt} W \cong \big(U^{(\ydiagram{2,1},\varnothing)} \otimes U^{(\varnothing,\ydiagram{1})}\big) \oplus \big(U^{(\ydiagram{1,1,1},\varnothing)} \otimes U^{(\varnothing,\ydiagram{1})}\big).
    \end{equation}
    Application of the algorithm presented in Section \bref{sec:traceless_projection} is left to the reader as an exercise, so we give directly the result:
    \begin{equation}
        \ytableausetup{boxsize=5.5pt} I(\ydiagram{2,1},\ydiagram{1})\backslash \{0\} = \{N+1,N-1\}\quad \text{and}\quad I(\ydiagram{1,1,1},\ydiagram{1})\backslash \{0\} = \{N-2\}.
    \end{equation}
    The restricted traceless projector \eqref{eq:traceless_projector_X} for $\ytableausetup{boxsize=5.5pt} X = \{(\ydiagram{2,1},\ydiagram{1}),(\ydiagram{1,1,1},\ydiagram{1})\}$ is
    \begin{equation}\label{eq:traceless_projector_Riemann}
        \scrP_{3,1}^{(X)} = \left(1 - \dfrac{1}{N+1} \scrA_{3,1}\right)\left(1 - \dfrac{1}{N-1} \scrA_{3,1}\right)\left(1 - \dfrac{1}{N-2} \scrA_{3,1}\right).
    \end{equation}
    The above operator results from \eqref{eq:intro_traceless_projector_factorised} by omitting the factor with the eigenvalue $N + 2$.

    \paragraph{Branching rules from $C_{m,n}(N)$ to $S_{m,n}(N)$.} As a by-product of the utilised construction, one can describe the restriction of irreducible representations of $C_{m,n}(N)$ to the subalgebra $S_{m,n}(N)$ generated by the transpositions \eqref{eq:transpositions}. Let us first recall some general facts about the irreducible representations of $C_{m,n}(N)$ (recall the key points of Schur-Weyl dualities mentioned in in Section \bref{sec:intro}):
    \begin{itemize}
        \item[1)] The algebra $C_{m,n}(N)$ is semisimple, {\it i.e.} it decomposes as a direct sum of full matrix algebras. Equivalently, $V^{m,n}$ decomposes as a direct sum of irreducible representations of $C_{m,n}(N)$.
        \item[2)] Inequivalent irreducible representations of $C_{m,n}(N)$ are indexed by the set $\Lambda_{m,n}(N)$ as in \eqref{eq:decomposition_GL}.
        \item[3)] Let $(\mu,\nu) \in \Lambda_{m,n}(N)$ and $M_{m,n}^{(\mu,\nu)}$ denote the corresponding irreducible representation of $C_{m,n}(N)$, then one has $d_{\mu\nu} = \dim M_{m,n}^{(\mu,\nu)}$ for the multiplicities in \eqref{eq:decomposition_GL}. 
    \end{itemize}
     More to that, as a $(GL(N),C_{m,n}(N))$-bimodule, $V^{m,n}$ admits the following multiplicity-free decomposition:
    \begin{equation}\label{eq:decomposition_bimodule}
        V^{m,n} \cong \bigoplus_{(\mu,\nu) \in \Lambda_{m,n}(N)} U^{(\mu,\nu)} \otimes M_{m,n}^{(\mu,\nu)}\,.
    \end{equation}
    The above expression manifests the idea that the $GL(N)$-invariant projection onto an irreducible (respectively, isotypic) $GL(N)$-component $(\mu,\nu)$ consists in fixing an element of (respectively, projecting onto) $M_{m,n}^{(\mu,\nu)}$.
    \vskip 2pt
    
    Due to the action of permutations \eqref{eq:transpositions}, any $C_{m,n}(N)$-invariant subspace in $V^{m,n}$ are representations of the group algebra $\mathbb{C}[\Sn{m}\times\Sn{n}]$. Complex representations of finite groups are completely reducible by Maschke's theorem, so for any $(\mu,\nu) \in \Lambda_{m,n}(N)$ we are interested in determining the set of pairs $(\rho,\sigma) \in \mathcal{P}_{m,n}(N)$ such that $L^{(\rho)}\otimes L^{(\sigma)}$ occurs in the decomposition of the restriction of $M_{m,n}^{(\mu,\nu)}$ to the subalgebra $S_{m,n}(N) \subset C_{m,n}(N)$ generated by the action of $\mathbb{C}[\Sn{m}\times \Sn{n}]$ in $V^{m,n}$. With a slight abuse of notation, we denote the latter restriction $M_{m,n}^{(\mu,\nu)}\big\downarrow{}_{\mathbb{C}[\Sn{m}\times\Sn{n}]}$, and write
    \begin{equation}
        \dim\mathrm{Hom}_{\mathbb{C}[\Sn{m}\times\Sn{n}]} \big(L^{(\rho)}\otimes L^{(\sigma)}, M_{m,n}^{(\mu,\nu)}\big)
    \end{equation}
    for the multiplicity of $L^{(\rho)}\otimes L^{(\sigma)}$ in $M_{m,n}^{(\mu,\nu)}\big\downarrow{}_{\mathbb{C}[\Sn{m}\times\Sn{n}]}$. One has the following assertion which follows from the see-saw correspondence described in Section \bref{sec:intro} (recall also \eqref{eq:multiplicity_LR_coef}).
    \begin{proposition}\label{prop:multiplicity_SsubB}
    For any $m,n \geqslant 1$ and $N \geqslant 1$ let $(\rho,\sigma)\in\mathcal{P}_{m,n}(N)$ and $(\mu,\nu)\in \Lambda_{m,n}(N)$. Then
        \begin{equation}\label{eq:multiplicity_SsubB}
        \dim\mathrm{Hom}_{\mathbb{C}[\Sn{m}\times\Sn{n}]} \big(L^{(\rho)}\otimes L^{(\sigma)}, M_{m,n}^{(\mu,\nu)}\big) = c_{\rho\sigma}^{\mu\nu}(N).
    \end{equation}
    \end{proposition}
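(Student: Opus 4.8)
The plan is to exploit the see-saw diagram directly: the decomposition \eqref{eq:decomposition_bimodule} identifies $M_{m,n}^{(\mu,\nu)}$ with the multiplicity space $\mathrm{Hom}_{GL(N)}\big(U^{(\mu,\nu)}, V^{m,n}\big)$, and the decomposition \eqref{eq:decomposition_GLGL} identifies the multiplicity space of $L^{(\rho)}\otimes L^{(\sigma)}$ in $V^{m,n}$ with $U^{(\rho,\varnothing)}\otimes U^{(\varnothing,\sigma)}$. First I would observe that both $GL(N)$ (diagonally) and $\mathbb{C}[\Sn{m}\times\Sn{n}]$ act on $V^{m,n}$, and these two actions commute, since the diagonal $GL(N)$ sits inside $GL(N)\times GL(N)$ whose commutant is exactly the image of $\mathbb{C}[\Sn{m}\times\Sn{n}]$ (the bottom row of the see-saw). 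Hence I can compute the multiplicity \eqref{eq:multiplicity_SsubB} in two equivalent ways: as the multiplicity of $L^{(\rho)}\otimes L^{(\sigma)}$ inside the $GL(N)$-isotypic-multiplicity space $M_{m,n}^{(\mu,\nu)}$, or symmetrically as the multiplicity of $U^{(\mu,\nu)}$ inside the $(GL(N)\times GL(N))$-isotypic-multiplicity space $U^{(\rho,\varnothing)}\otimes U^{(\varnothing,\sigma)}$ regarded as a diagonal $GL(N)$-module.

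The key computational step is then the chain of equalities
\begin{align*}
\dim\mathrm{Hom}_{\mathbb{C}[\Sn{m}\times\Sn{n}]}\big(L^{(\rho)}\otimes L^{(\sigma)}, M_{m,n}^{(\mu,\nu)}\big)
&= \dim\mathrm{Hom}_{GL(N)\times\mathbb{C}[\Sn{m}\times\Sn{n}]}\big(U^{(\mu,\nu)}\otimes L^{(\rho)}\otimes L^{(\sigma)}, V^{m,n}\big)\\
&= \dim\mathrm{Hom}_{GL(N)}\big(U^{(\mu,\nu)},\, U^{(\rho,\varnothing)}\otimes U^{(\varnothing,\sigma)}\big)\\
&= c_{\rho\sigma}^{\mu\nu}(N),
\end{align*}
where the first equality uses \eqref{eq:decomposition_bimodule} (so that $\mathrm{Hom}_{GL(N)}(U^{(\mu,\nu)},V^{m,n})=M_{m,n}^{(\mu,\nu)}$ as $\mathbb{C}[\Sn{m}\times\Sn{n}]$-modules, because the $\Sn{}$-action commutes with $GL(N)$), the second uses \eqref{eq:decomposition_GLGL} together with Schur's lemma for $\mathbb{C}[\Sn{m}\times\Sn{n}]$ acting on $V^{m,n}$ (so that $\mathrm{Hom}_{\mathbb{C}[\Sn{m}\times\Sn{n}]}(L^{(\rho)}\otimes L^{(\sigma)},V^{m,n})\cong U^{(\rho,\varnothing)}\otimes U^{(\varnothing,\sigma)}$ as a diagonal $GL(N)$-module), and the last is exactly \eqref{eq:branching_GLGL-GL}, the branching multiplicity for $U^{(\rho,\varnothing)}\otimes U^{(\varnothing,\sigma)}\big\downarrow_{GL(N)}$ expressed through the structure constants \eqref{eq:LR_ext_particular}.

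I expect the main obstacle to be bookkeeping rather than depth: one must carefully justify swapping the order of taking isotypic components, i.e.\ that for a module carrying two commuting semisimple actions (here $GL(N)$ and $\mathbb{C}[\Sn{m}\times\Sn{n}]$, semisimplicity of the latter on $V^{m,n}$ being Maschke's theorem) the ``$\rho,\sigma$-multiplicity of the $(\mu,\nu)$-multiplicity space'' equals the ``$(\mu,\nu)$-multiplicity of the $(\rho,\sigma)$-multiplicity space''. This is the standard double-centraliser / see-saw reciprocity, and once it is in place the proof reduces to quoting \eqref{eq:decomposition_bimodule}, \eqref{eq:decomposition_GLGL} and \eqref{eq:branching_GLGL-GL}. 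One subtlety worth a remark is that $S_{m,n}(N)$ is only the \emph{image} of $\mathbb{C}[\Sn{m}\times\Sn{n}]$ in $\mathrm{End}(V^{m,n})$, so $M_{m,n}^{(\mu,\nu)}\big\downarrow_{S_{m,n}(N)}$ and $M_{m,n}^{(\mu,\nu)}\big\downarrow_{\mathbb{C}[\Sn{m}\times\Sn{n}]}$ carry the same isomorphism class of module (the kernel of the surjection acts as zero), so the restriction-multiplicities agree; this is exactly the abuse of notation already announced in the text before the proposition and needs only a one-line acknowledgement.
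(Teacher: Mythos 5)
Your proof is correct and takes essentially the same route as the paper: both compare the two bimodule decompositions \eqref{eq:decomposition_GLGL} and \eqref{eq:decomposition_bimodule} via see-saw reciprocity and finish by invoking the branching rule \eqref{eq:branching_GLGL-GL}. The only difference is that you spell out the reciprocity step as an explicit chain of $\mathrm{Hom}$-space identities, whereas the paper delegates it to a citation of \cite[Theorem 1.7]{Hal96}.
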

    \begin{proof}
         The decomposition of the restriction $M^{(\mu,\nu)}_{m,n}\big\downarrow{}_{\mathbb{C}[\Sn{m}\times\Sn{n}]}$ into irreducible components $L^{(\rho)}\otimes L^{(\sigma)}$ follows by comparing \eqref{eq:decomposition_GLGL} with \eqref{eq:decomposition_bimodule} (see, {\it e.g.}, \cite[Theorem 1.7]{Hal96} and references therein) upon applying the branching rules \eqref{eq:branching_GLGL-GL}:
        \begin{equation}\label{eq:multiplicities}
            \dim \mathrm{Hom}_{\mathbb{C}[\Sn{m}\times\Sn{n}]}\big(L^{(\rho)}\otimes L^{(\sigma)}, M^{(\mu,\nu)}_{m,n}\big) = \dim \mathrm{Hom}_{GL(N)}\big(U^{(\mu,\nu)}, U^{(\rho,\varnothing)}\otimes U^{(\varnothing,\sigma)}\big).
        \end{equation}
    \end{proof}

    Let us note that for $N \geqslant m + n$ one has $C_{m,n}(N) \cong B_{m,n}(N)$ \cite[Theorem 5.8]{BCHLLS}, so in this case the multiplicity \eqref{eq:multiplicity_SsubB} can be derived from the representation theory of the walled Brauer algebra \cite[Theorem 3.14]{Hal96} (see also \cite[p. 1492]{Nikitin_07}). This possibility is analysed in detail in the forthcoming subsection. 
    
    \paragraph{Alternative description of $\mathrm{spec}(\scrA_{m,n})$.} By design of the factorised form of the traceless projector \eqref{eq:intro_traceless_projector_factorised}, the number of factors therein can not be reduced unless a subspace of $V^{m,n}$ is considered (for example, as described by Theorem \bref{thm:eigenvalues_restricted}). On the contrary, for any finite set $S \subset \mathbb{C}$ the condition $\mathrm{spec}(\scrA_{m,n})\backslash \{0\} \subseteq S$ is necessary and sufficient to have
    \begin{equation}
        \scrP_{m,n} = \prod_{a\in S\backslash \{0\}} \left(1 - \dfrac{1}{a} \scrA_{m,n}\right).
    \end{equation}
    Sufficiency follows by recalling \eqref{eq:intro_traceless_projector_factorised} and by noting that $\scrA_{m,n}\scrP_{m,n} = 0$, while necessity follows by Lemma \bref{lem:operator_A}: the above formula expresses the projector onto $\mathrm{ker}(\scrA_{m,n})$ thus all non-zero eigenvalues of $\scrA_{m,n}$ are necessarily present in $S$. In what follows we describe a particular set $S = \widetilde{\mathrm{spec}}(\scrA_{m,n})$ which extends $\mathrm{spec}(\scrA_{m,n})$, and which is defined explicitly in terms of the Littlewood-Richardson coefficients.
    \vskip 2pt
    
    The idea is to consider the upper bound for the multiplicities \eqref{eq:multiplicity_SsubB} which follows from the representation theory of $B_{m,n}(N)$. Namely, one recalls that $C_{m,n}(N)$ is a homomorphic image of $B_{m,n}(N)$ (recall the see-saw diagram in Section \bref{sec:intro}), so that an irreducible representation of the former is the image of an indecomposable representation of the latter. Indecomposable representations of $B_{m,n}(N)$ are exhausted by cell modules $\Delta^{(\mu,\nu)}_{m,n}$ indexed by pairs of partitions $(\mu,\nu)$ such that $m - |\mu| = n - |\nu| = r$ for all $r \in \{0,1,\ldots,\min(m,n)\}$. Decomposition of the restriction of the cell modules to the subalgebra $\mathbb{C}[\Sn{m}\times \Sn{n}]$ was given in \cite[Theorem 6.1]{CDDM}: for any $(\rho,\sigma) \in \mathcal{P}_{m,n}$,
    \begin{equation}
        \dim \mathrm{Hom}_{\mathbb{C}[\Sn{m}\times \Sn{n}]} \big(L^{(\rho)}\otimes L^{(\rho)},\Delta^{(\mu,\nu)}_{m,n}\big) = \sum_{\beta \in \mathcal{P}} c^{\rho}_{\mu\beta} c^{\sigma}_{\nu\beta}.
    \end{equation}
    \vskip 2pt
    
    Aside from re-deriving a number of useful properties of the coefficients \eqref{eq:multiplicity_LR_coef}, the above multiplicities provide an alternative description of $\mathrm{spec}(\scrA_{m,n})$ for (infinitely many) special cases for the values of the parameters $m,n,N$. The following lemma extends the known result for $N \geqslant m + n$ when $C_{m,n}(N) \cong B_{m,n}(N)$ (see \cite[Proposition 2.2 and Corollary 2.3.1]{Koike_89}, \cite[Theorem 3.14]{Hal96} and \cite[p. 1492]{Nikitin_07}) to all integers $m,n,N\geqslant 1$. Due to the technical character of the proof we delegate it to Appendix \bref{sec:proof_lemma_estimate}.

    \begin{lemma}\label{lem:estimate_multiplicities} For any $m,n \geqslant 1$ and $N \geqslant 1$ let $(\rho,\sigma)\in \mathcal{P}_{m,n}(N)$ and let $(\mu,\nu)\in \Lambda_{m,n}(N)$, then
        \begin{equation}\label{eq:estimate_multiplicities}
            c^{\mu\nu}_{\rho\sigma}(N) \leqslant \sum_{\beta \in \mathcal{P}(N)} c^{\rho}_{\mu\beta} c^{\sigma}_{\nu\beta}.
        \end{equation}
    The above estimate saturates in the following mutually exclusive cases:
    \begin{itemize}
        \item[1)] $N \geqslant m + n - 1$, or otherwise
        \item[2)] $m = 1$, $n\geqslant 2$, $N \leqslant n - 1$ or $m\geqslant 2$, $n = 1$, $N \leqslant m-1$, or otherwise
        \item[3)] $m,n\geqslant 2$, $N = 1$.
    \end{itemize}
    Otherwise, when $m,n \geqslant 2$ and $2 \leqslant N \leqslant m + n - 2$, there exist $(\rho,\sigma)$ and $(\mu,\nu)$ as above such that the inequality is non-saturated.
    \end{lemma}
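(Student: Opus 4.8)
The plan is to read both sides of \eqref{eq:estimate_multiplicities} as multiplicities of $GL(N)$-representations and compare them with the ``stable'' (large-$N$) multiplicities. First I would use \eqref{eq:multiplicity_LR_coef} to replace the left-hand side by a single Littlewood-Richardson number: for the $\lambda$ determined by $\mathfrak{s}^{-1}[\lambda,\sigma_{1}]=(\mu,\nu)$ one has $c^{\mu\nu}_{\rho\sigma}(N)=c^{\lambda}_{\rho\bar\sigma}$, while $c^{\mu\nu}_{\rho\sigma}(N)=0$ if no such $\lambda$ occurs in \eqref{eq:LR_ext_poly}, in which case \eqref{eq:estimate_multiplicities} is trivial. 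Since $\ell(\rho),\ell(\bar\sigma),\ell(\lambda)\leqslant N$, this is an honest Littlewood-Richardson coefficient; and on the right-hand side of \eqref{eq:estimate_multiplicities} the summand vanishes unless $\beta\subseteq\rho$, so $\ell(\beta)\leqslant\ell(\rho)\leqslant N$ automatically, i.e. the restriction $\beta\in\mathcal P(N)$ is vacuous and $\sum_{\beta}c^{\rho}_{\mu\beta}c^{\sigma}_{\nu\beta}=\langle s_{\rho/\mu},s_{\sigma/\nu}\rangle$ is exactly the coefficient of $U^{(\mu,\nu)}$ in the \emph{universal} product of characters following \cite{Koike_89}. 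Thus the content of \eqref{eq:estimate_multiplicities} is that the $GL(N)$-multiplicity of $U^{(\mu,\nu)}$ in $U^{(\rho,\varnothing)}\otimes U^{(\varnothing,\sigma)}$ never exceeds its universal value, with equality precisely in cases (1)--(3).

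For the inequality I see two routes. The combinatorial one uses Koike's universal character ring, in which $S_{[\rho,\varnothing]}\,S_{[\varnothing,\sigma]}=\sum_{(\mu,\nu)}\langle s_{\rho/\mu},s_{\sigma/\nu}\rangle\,S_{[\mu,\nu]}$; applying the specialisation homomorphism to the character ring of $GL(N)$ (which sends $S_{[\mu,\nu]}\mapsto s_{(\mu,\nu)}$ when $\ell(\mu)+\ell(\nu)\leqslant N$, and $S_{[\xi,\eta]}\mapsto 0$ or $\pm s_{(\xi',\eta')}$ with $\ell(\xi')+\ell(\eta')<\ell(\xi)+\ell(\eta)$ otherwise, by the modification rule \cite[Proposition 2.2]{Koike_89}) and extracting the coefficient of $s_{(\mu,\nu)}$ gives $c^{\mu\nu}_{\rho\sigma}(N)=\langle s_{\rho/\mu},s_{\sigma/\nu}\rangle+\Sigma$, where $\Sigma$ collects the contributions of the ``out-of-range'' terms $S_{[\xi,\eta]}$ with $\xi\subseteq\rho$, $\eta\subseteq\sigma$ and $\ell(\xi)+\ell(\eta)>N$; one then has to show $\Sigma\leqslant 0$. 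The representation-theoretic route is shorter to state: $C_{m,n}(N)$ is a quotient of the cellular algebra $B_{m,n}(N)$, so the irreducible $C_{m,n}(N)$-module $M^{(\mu,\nu)}_{m,n}$, pulled back, is the head of the cell module $\Delta^{(\mu,\nu)}_{m,n}$ (same label, as one reads off the cell filtration of $V^{m,n}$ as a $B_{m,n}(N)$-module); restricting to the semisimple subalgebra $\mathbb C[\Sn{m}\times\Sn{n}]$ can only weakly decrease multiplicities when passing from $\Delta^{(\mu,\nu)}_{m,n}$ to its quotient, and combining this with Proposition \bref{prop:multiplicity_SsubB} and the cell-module restriction rule \cite[Theorem 6.1]{CDDM} yields \eqref{eq:estimate_multiplicities}. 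I expect the combinatorial route to be the one whose details are technical, the obstacle being the sign bookkeeping that forces $\Sigma\leqslant 0$; the cellular route trades this for the (standard) identification of the label of $M^{(\mu,\nu)}_{m,n}$.

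For saturation: in case (1), $N\geqslant m+n-1$, the algebra $B_{m,n}(N)$ is semisimple \cite[Theorem 6.3]{CDDM}, so its cell modules are simple and $C_{m,n}(N)$ is obtained from $\bigoplus_{(\mu,\nu)}\mathrm{Mat}_{\dim\Delta^{(\mu,\nu)}_{m,n}}(\mathbb C)$ by deleting the blocks with $(\mu,\nu)\notin\Lambda_{m,n}(N)$ and keeping the rest untouched; hence $M^{(\mu,\nu)}_{m,n}=\Delta^{(\mu,\nu)}_{m,n}$ for $(\mu,\nu)\in\Lambda_{m,n}(N)$ and \eqref{eq:estimate_multiplicities} is an equality (equivalently: the only possible out-of-range term is $S_{[(1^{m}),(1^{n})]}$, which occurs when $N=m+n-1$ and specialises to $0$, so $\Sigma=0$). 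In case (2), say $m=1$, the only out-of-range term is $S_{[(1),\sigma]}$ with $\ell(\sigma)=N$, and $s_{((1),\sigma)}$ again specialises to $0$, so $\Sigma=0$ for all $N$ (in particular for $N\leqslant n-1$); the case $n=1$ is symmetric under $m\leftrightarrow n$, $\rho\leftrightarrow\sigma$, $\mu\leftrightarrow\nu$. In case (3), $N=1$, the sets $\mathcal P_{m,n}(1)$ and $\Lambda_{m,n}(1)$ are singletons, $V^{m,n}$ is one-dimensional, and both sides of \eqref{eq:estimate_multiplicities} equal $1$.

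For non-saturation, $m,n\geqslant 2$ and $2\leqslant N\leqslant m+n-2$, I would exhibit explicit $(\rho,\sigma)$ and $(\mu,\nu)$ with $\Sigma<0$, i.e. for which an out-of-range stable constituent specialises to a \emph{nonzero} genuine character that cancels part of the universal coefficient of some valid $U^{(\mu,\nu)}$. The base case $m=n=N=2$ already displays the mechanism: with $(\rho,\sigma)=\big((1,1),(1,1)\big)$ one has $\bigwedge^{2}V\otimes\bigwedge^{2}V^{*}\cong\mathbb C$ as a $GL(2)$-module, the universal constituent $S_{[(1,1),(1,1)]}$ specialises to $-s_{((1),(1))}$, and hence $c^{(1)(1)}_{(1,1)(1,1)}(2)=0<1=\langle s_{(1)},s_{(1)}\rangle$. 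For the general range one makes $\ell(\rho)+\ell(\sigma)$ as large as the constraints $\rho\vdash m$, $\sigma\vdash n$, $\ell(\rho),\ell(\sigma)\leqslant N$ allow (taking columns, or hooks of height $N$), so that the universal product contains a constituent $S_{[\xi,\eta]}$ with $\ell(\xi)+\ell(\eta)\geqslant N+2$, and then tracks, via the complementation picture behind \eqref{eq:multiplicity_LR_coef}, which valid label loses multiplicity once that constituent is modified. I expect this last step --- a uniform construction of $(\rho,\sigma,\mu,\nu)$ together with the verification that the relevant modified term is genuinely nonzero (rather than $0$, as it is in the ``over by one'' situation of case (1)) --- to be the main obstacle, and it is precisely the point where the combinatorial route must control the sign in the modification rule, whereas the cellular argument gives only the inequality $\Sigma\leqslant 0$.
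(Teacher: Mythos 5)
Your approach matches the paper's in its main structural ideas: the inequality itself comes from the cellular structure of $B_{m,n}(N)$ together with the restriction rule for cell modules \cite[Theorem 6.1]{CDDM} (this is exactly how the paper sets it up in the paragraph introducing $\widetilde{\mathrm{spec}}(\scrA_{m,n})$), and the saturation cases are handled by analysing when the specialisation $\tilde{\pi}_{N}$ of \cite[Proposition 2.2]{Koike_89} sends every out-of-range universal constituent to zero. Your identification of the single dangerous term in case (1), namely $[\,(1_{m}),(1_{n})\,]_{GL}$ when $N=m+n-1$, and the analogous analysis in case (2), match the paper's argument. Your case (3) reasoning ($V^{m,n}$ is one-dimensional over $N=1$, hence both sides equal $1$) is a slightly different but perfectly valid shortcut. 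Your side remark that the sign bookkeeping in the modification rule is what forces $\Sigma\leqslant 0$ is a bit of a red herring — the cellular route gives the inequality without any sign analysis, and for saturation one shows directly that the out-of-range terms specialise to zero rather than to something negative.

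The genuine gap is the non-saturation claim. The lemma asserts the existence of a non-saturating pair $(\rho,\sigma)$, $(\mu,\nu)$ for \emph{every} $m,n\geqslant 2$ with $2\leqslant N\leqslant m+n-2$, and your proposal only exhibits the base case $m=n=N=2$ and then outlines a heuristic ("take columns or hooks of height $N$", "track which valid label loses multiplicity") that you yourself flag as the main unresolved obstacle. The paper fills this in with three explicit, case-split constructions depending on the relation between $n$ and $N$: for $n\leqslant N$ it takes $\rho=(m+n-N-1,1_{N-n+1})$, $\sigma=(1_{n})$, $\mu=(m+n-N-1,1_{N-n})$, $\nu=(1_{n-1})$; for $n=k(N-1)+1$ it takes $\rho=(m-1,1)$, $\sigma=(k_{N-1},1)$, $\mu=(m-1)$, $\nu=(k_{N-1})$; and for $n=k(N-1)+1+t$ with $1\leqslant t<N-1$ it takes $\rho=(m-1,1)$, $\sigma=\big((k+1)_{t},k_{N-t-1},1\big)$, $\mu=(m-1)$, $\nu=\big((k+1)_{t},k_{N-t-1}\big)$, in each case verifying $c^{\mu+\bar{\nu}}_{\rho\bar{\sigma}}=0$ via \eqref{eq:multiplicity_LR_coef} while the right-hand side of \eqref{eq:estimate_multiplicities} is nonzero (with $\beta=(1)$). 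The second and third families do not look like "columns or hooks of height $N$" and would not obviously emerge from your heuristic, so this part of the proof cannot be considered done.
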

    
    For example, to have the two sides of the inequality \eqref{eq:estimate_multiplicities} distinct consider $m = 3$, $n = 2$ and $N = 2$. Take $\ytableausetup{boxsize=5.5pt} (\rho,\sigma) = (\ydiagram{2,1},\ydiagram{1,1})$ and $(\mu,\nu) = \ytableausetup{boxsize=5.5pt} (\ydiagram{2},\ydiagram{1})$, so that $\bar{\sigma} = \varnothing$ and $\mathfrak{s}(\mu,\nu) = \ytableausetup{boxsize=5.5pt} [\ydiagram{3},1]$ (recall \eqref{eq:map_bar} and \eqref{eq:map_staircase}). Then $c^{\lambda}_{\rho\bar{\sigma}} \neq 0$ only for $\ytableausetup{boxsize=5.5pt} \lambda = \ydiagram{2,1}$. In this case $\ytableausetup{boxsize=5.5pt} \big[\,\ydiagram{2,1},1\big] = \big[\ydiagram{1},0\big] \neq \mathfrak{s}(\mu,\nu)$, and therefore
    \begin{equation*}
        \ytableausetup{boxsize=3.5pt}
        c_{\ydiagram{2,1},\ydiagram{1,1}}^{\ydiagram{2},\ydiagram{1}}(2) = 0 \, < \, \sum_{\beta\in \mathcal{P}(2)} c^{\,\ydiagram{2,1}}_{\ydiagram{2},\beta} c^{\,\ydiagram{1,1}}_{\ydiagram{1},\beta} = c^{\,\ydiagram{2,1}}_{\ydiagram{2},\ydiagram{1}} c^{\,\ydiagram{1,1}}_{\ydiagram{1},\ydiagram{1}} = 1.
    \end{equation*}
    \vskip 2pt

    As a related side remark, in \cite[Theorem 2.13(b) and Corollary 2.14(b)]{Hal96} the multiplicities \eqref{eq:multiplicity_SsubB} are given by the right-hand-side of \eqref{eq:estimate_multiplicities} with the only condition that $\ell(\rho)\leqslant N$ and $\ell(\sigma)\leqslant N$. On the other hand, Lemma \bref{lem:estimate_multiplicities} suggests that the latter condition is not sufficient and requires additional restrictions on the values of $m,n,N$ (for example, the condition $N \geqslant m + n$ such that $C_{m,n}(N) \cong B_{m,n}(N)$). Otherwise, the formulations of \cite[Theorem 2.13(b) and Corollary 2.14(b)]{Hal96} may turn out misleading.
    \vskip 2pt

    As a consequence of Lemma \bref{lem:estimate_multiplicities} and Proposition \bref{prop:multiplicity_SsubB} one derives the following properties of the multiplicities \eqref{eq:multiplicity_SsubB} (for the proof, recall also that $c^{\gamma}_{\alpha\beta} \neq 0$ implies $\alpha \subseteq \gamma$ and $\beta \subseteq \gamma$ \cite{Fulton_YT}).
    \begin{corollary}\label{cor:estimate_multiplicities}
        For any $m,n\geqslant 1$ and $N\geqslant 1$, let $(\mu,\nu)\in \Lambda^{(r)}_{m,n}(N)$ for some $r\in \{0,1,\ldots,\min(m,n)\}$. If $L^{(\rho)}\otimes L^{(\sigma)}$ occurs in the restriction $M_{m,n}^{(\mu,\nu)}\big\downarrow{}_{\mathbb{C}[\Sn{m}\times\Sn{n}]}$ then there exists a partition $\beta \in \mathcal{P}(N)$ such that $c^{\rho}_{\mu\beta} c^{\sigma}_{\nu\beta} \neq 0$. In this case
    \begin{equation}
        \mu \subseteq \rho\,,\quad \nu \subseteq \sigma \quad \text{and} \quad |\rho| - |\mu| = |\sigma| - |\nu| = r.
    \end{equation}
    \end{corollary}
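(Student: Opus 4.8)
The plan is to derive the corollary directly from Proposition \bref{prop:multiplicity_SsubB} and Lemma \bref{lem:estimate_multiplicities}, combined with the elementary support and degree properties of Littlewood-Richardson coefficients (the former recalled in the statement, the latter obtained by comparing total degrees in \eqref{eq:LR_product}); there is no new idea involved, only an unpacking of those statements in the correct order.

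First I would pin down the range of $(\rho,\sigma)$. Because $L^{(\rho)}$ (respectively $L^{(\sigma)}$) is an irreducible module over $\mathbb{C}\Sn{m}$ (respectively $\mathbb{C}\Sn{n}$), we automatically have $|\rho|=m$ and $|\sigma|=n$. Moreover, $M_{m,n}^{(\mu,\nu)}$ occurs in $V^{m,n}$ as a $\mathbb{C}[\Sn{m}\times\Sn{n}]$-module --- this is visible in the bimodule decomposition \eqref{eq:decomposition_bimodule}, where $\mathbb{C}[\Sn{m}\times\Sn{n}]$ acts through $C_{m,n}(N)$ on the $M$-factors --- so an irreducible occurring in $M_{m,n}^{(\mu,\nu)}$ also occurs in $V^{m,n}$, whence by the classical Schur-Weyl duality $\ell(\rho)\leqslant N$ and $\ell(\sigma)\leqslant N$, i.e. $(\rho,\sigma)\in\mathcal{P}_{m,n}(N)$. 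This is exactly the hypothesis under which Proposition \bref{prop:multiplicity_SsubB} and Lemma \bref{lem:estimate_multiplicities} are applicable to the pair at hand.

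Next I would run the chain of (in)equalities. The occurrence hypothesis says $\dim\mathrm{Hom}_{\mathbb{C}[\Sn{m}\times\Sn{n}]}(L^{(\rho)}\otimes L^{(\sigma)},M_{m,n}^{(\mu,\nu)})\geqslant 1$, which by Proposition \bref{prop:multiplicity_SsubB} equals $c^{\mu\nu}_{\rho\sigma}(N)$; feeding $c^{\mu\nu}_{\rho\sigma}(N)\geqslant 1$ into the estimate of Lemma \bref{lem:estimate_multiplicities} yields $1\leqslant c^{\mu\nu}_{\rho\sigma}(N)\leqslant\sum_{\beta\in\mathcal{P}(N)}c^{\rho}_{\mu\beta}c^{\sigma}_{\nu\beta}$, so at least one summand is nonzero: there is $\beta\in\mathcal{P}(N)$ with $c^{\rho}_{\mu\beta}\neq 0$ and $c^{\sigma}_{\nu\beta}\neq 0$.

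Finally I would read off the conclusions from this single $\beta$. Nonvanishing of $c^{\rho}_{\mu\beta}$ gives $\mu\subseteq\rho$, and taking total degrees in \eqref{eq:LR_product} gives $|\rho|=|\mu|+|\beta|$; likewise $c^{\sigma}_{\nu\beta}\neq 0$ gives $\nu\subseteq\sigma$ and $|\sigma|=|\nu|+|\beta|$. Hence $|\rho|-|\mu|=|\sigma|-|\nu|=|\beta|$, and since $(\mu,\nu)\in\Lambda^{(r)}_{m,n}(N)$ means $m-|\mu|=n-|\nu|=r$ while $|\rho|=m$ and $|\sigma|=n$, we conclude $|\rho|-|\mu|=|\sigma|-|\nu|=r$ (so in fact $|\beta|=r$), as claimed. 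There is no genuine obstacle in this corollary: all the real work sits in Lemma \bref{lem:estimate_multiplicities} (proved in Appendix \bref{sec:proof_lemma_estimate}), and the only mild care required here is to verify that its hypotheses on $(\rho,\sigma)$ are met automatically, which is the first step above.
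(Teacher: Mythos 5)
Your proof is correct and follows exactly the route the paper sketches: combine Proposition \bref{prop:multiplicity_SsubB} with the upper bound of Lemma \bref{lem:estimate_multiplicities} to produce the required $\beta$, then read off the inclusion and degree conclusions from the standard support and degree properties of the Littlewood--Richardson coefficients. The additional preliminary step verifying $(\rho,\sigma)\in\mathcal{P}_{m,n}(N)$, so that the two cited results actually apply, is a careful and correct touch that the paper leaves implicit.
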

    
    Define
    \begin{equation}
    \def\arraystretch{1.4}
    \begin{array}{rl}
        \widetilde{\mathrm{spec}}(\scrA_{m,n}) = & \big\{Nr + c(\rho\slash\mu) + c(\sigma\slash\nu) \;:\; r \in \{0,1,\ldots,\min(m,n)\},\,(\mu,\nu) \in \Lambda^{(r)}_{m,n}(N), \\
        \hfill & \;(\rho,\sigma)\in \mathcal{P}_{m,n}(N)\;\text{such that}\; c^{\rho}_{\mu\beta}c^{\sigma}_{\nu\beta} \neq 0\;\text{for some}\;\beta\subseteq \rho\cap\sigma\} \cap \mathbb{N}_{0}
    \end{array}
    \end{equation}
    The requirement for the elements of the above set to be non-negative is imposed in view of Lemma \bref{lem:operator_A}, so by Corollary \bref{cor:estimate_multiplicities} one indeed has $\mathrm{spec}(\scrA_{m,n}) \subseteq \widetilde{\mathrm{spec}}(\scrA_{m,n})$.

    \begin{proposition}\label{prop:spec_A_alternative}
        Let $m,n \geqslant 1$ and $N\geqslant 1$ be as in Lemma \bref{lem:estimate_multiplicities} such that the estimate \eqref{eq:estimate_multiplicities} saturates, then
        \begin{equation}\label{eq:alternative_spec_A}
            \mathrm{spec}(\scrA_{m,n}) = \widetilde{\mathrm{spec}}(\scrA_{m,n}).
        \end{equation}
    \end{proposition}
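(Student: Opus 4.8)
The plan is to establish \eqref{eq:alternative_spec_A} by proving the two inclusions $\mathrm{spec}(\scrA_{m,n}) \subseteq \widetilde{\mathrm{spec}}(\scrA_{m,n})$ and $\widetilde{\mathrm{spec}}(\scrA_{m,n}) \subseteq \mathrm{spec}(\scrA_{m,n})$ separately, noting that only the second one requires the saturation hypothesis. The whole matter reduces to a comparison of two conditions on Littlewood--Richardson coefficients: by Theorem \bref{thm:eigenvalues}, $\mathrm{spec}(\scrA_{m,n})$ is the set of integers $Nr + c(\rho\slash\mu) + c(\sigma\slash\nu)$ over $r\in\{0,\ldots,\min(m,n)\}$, $(\rho,\sigma)\in\mathcal{P}_{m,n}(N)$ and $(\mu,\nu)\in\Lambda^{(r)}_{m,n}(N)$ with $c^{\mu\nu}_{\rho\sigma}(N)\neq 0$, whereas $\widetilde{\mathrm{spec}}(\scrA_{m,n})$ is given by the same values with the condition $c^{\mu\nu}_{\rho\sigma}(N)\neq 0$ replaced by ``$c^{\rho}_{\mu\beta}c^{\sigma}_{\nu\beta}\neq 0$ for some $\beta\subseteq\rho\cap\sigma$'', the result being intersected with $\mathbb{N}_0$.

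For the forward inclusion I would simply spell out the remark already made just before the statement. If $c^{\mu\nu}_{\rho\sigma}(N)\neq 0$ then, by Lemma \bref{lem:estimate_multiplicities}, $\sum_{\beta\in\mathcal{P}(N)}c^{\rho}_{\mu\beta}c^{\sigma}_{\nu\beta}>0$, so some term $c^{\rho}_{\mu\beta}c^{\sigma}_{\nu\beta}$ is nonzero; the standard property $c^{\gamma}_{\alpha\delta}\neq 0\Rightarrow\delta\subseteq\gamma$ forces $\beta\subseteq\rho$ and $\beta\subseteq\sigma$, hence $\beta\subseteq\rho\cap\sigma$. Since all eigenvalues of $\scrA_{m,n}$ are non-negative integers (Theorem \bref{thm:eigenvalues} together with Lemma \bref{lem:operator_A}), membership in the intersection with $\mathbb{N}_0$ is automatic, and the element lies in $\widetilde{\mathrm{spec}}(\scrA_{m,n})$. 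This holds unconditionally.

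For the reverse inclusion — the only step using the hypothesis — I would take $a\in\widetilde{\mathrm{spec}}(\scrA_{m,n})$ with witnesses $r$, $(\mu,\nu)\in\Lambda^{(r)}_{m,n}(N)$, $(\rho,\sigma)\in\mathcal{P}_{m,n}(N)$ and $\beta\subseteq\rho\cap\sigma$ such that $c^{\rho}_{\mu\beta}c^{\sigma}_{\nu\beta}\neq 0$ and $a = Nr + c(\rho\slash\mu) + c(\sigma\slash\nu)$. Since $\rho,\sigma$ have at most $N$ rows, so does $\beta$, whence $\beta\in\mathcal{P}(N)$ and $\sum_{\beta'\in\mathcal{P}(N)}c^{\rho}_{\mu\beta'}c^{\sigma}_{\nu\beta'}\geqslant c^{\rho}_{\mu\beta}c^{\sigma}_{\nu\beta}\geqslant 1$. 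Invoking the saturation of \eqref{eq:estimate_multiplicities} for the given $m,n,N$ yields $c^{\mu\nu}_{\rho\sigma}(N) = \sum_{\beta'\in\mathcal{P}(N)}c^{\rho}_{\mu\beta'}c^{\sigma}_{\nu\beta'}\geqslant 1>0$, so the triple $r,(\rho,\sigma),(\mu,\nu)$ satisfies the hypotheses of Theorem \bref{thm:eigenvalues} and $a\in\mathrm{spec}(\scrA_{m,n})$. Combining the two inclusions gives \eqref{eq:alternative_spec_A}.

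I do not anticipate a genuine obstacle here: given Theorem \bref{thm:eigenvalues} and the (appendix-deferred) Lemma \bref{lem:estimate_multiplicities}, the argument is pure bookkeeping. The only points that need a little care are verifying that the index $r$ attached to a witness of $\widetilde{\mathrm{spec}}(\scrA_{m,n})$ is the one occurring in Theorem \bref{thm:eigenvalues}, which follows from $|\beta| = |\rho|-|\mu| = m-|\mu|$ and $|\beta| = |\sigma|-|\nu| = n-|\nu|$, and that the $\cap\,\mathbb{N}_0$ in the definition of $\widetilde{\mathrm{spec}}(\scrA_{m,n})$ is harmless in both directions because eigenvalues of $\scrA_{m,n}$ are non-negative integers. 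All the actual substance of the proposition is carried by the saturation cases of Lemma \bref{lem:estimate_multiplicities}.
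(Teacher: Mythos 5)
Your proof is correct and follows the same route the paper intends (the proposition is stated without a formal proof, the content being carried by Theorem \bref{thm:eigenvalues}, Lemma \bref{lem:estimate_multiplicities} and Corollary \bref{cor:estimate_multiplicities}). Your write-up is a careful spelling-out of that bookkeeping, with the one genuinely non-trivial observation — that $\beta\subseteq\rho\cap\sigma$ together with $\rho,\sigma\in\mathcal{P}(N)$ forces $\beta\in\mathcal{P}(N)$, so the witness can be fed into the saturated inequality — made explicit.
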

    
    A separate interesting problem is to identify the necessary condition for \eqref{eq:alternative_spec_A}. The requirement of non-negativity of the eigenvalues due to the point $(3)$ of Lemma \bref{lem:operator_A} was already taken into account in the definition of $\widetilde{\mathrm{spec}}(\scrA_{m,n})$ by hand. To show that the latter requirement becomes relevant when $m,n,N$ are such that the bound \eqref{eq:estimate_multiplicities} is non-saturated, consider $m = n = N = 4$ and take $\rho = \sigma = (1_{4})$. Then for $\mu = \nu = (1_{2})$ the right-hand side of \eqref{eq:estimate_multiplicities} is non-zero, on the other hand $Nr + c(\rho\slash\mu) + c(\sigma\slash\nu) = -2$.
    \vskip 2pt
    
    Continuing the above example, there is another possibility $\mu = \nu = (1)$ such that the right-hand side of \eqref{eq:estimate_multiplicities} is non-zero. In this case one has $Nr + c(\rho\slash\mu) + c(\sigma\slash\nu) = 0$ for $r \neq 0$, and thus $c^{\mu\nu}_{\rho\sigma}(4) = 0$ since otherwise there would be a contradiction with Theorem \bref{thm:eigenvalues}.
    \vskip 2pt
    
    Apart from excluding negative elements, the rest of the necessary condition does not manifest itself in the simplest cases with $m,n\geqslant 2$ and $2 \leqslant N \leqslant m+n-2$: so far the author did not succeed in finding neither an example such that $\widetilde{\mathrm{spec}}(\scrA_{m,n}) \not\subseteq \mathrm{spec}(\scrA_{m,n})$ nor a convincing argument to expect \eqref{eq:alternative_spec_A} to hold for all $m,n,N \geqslant 1$.
    \vskip 2pt 

    \section{Splitting idempotents}
    \subsection{Splitting idempotent in \texorpdfstring{$C_{m,n}(N)$}{the centraliser algebra}}\label{sec:splitting_idempotent_C}
    
    \paragraph{Traceless projector as a splitting idempotent.} In Section \bref{sec:traceless_projection}, the traceless projector \eqref{eq:traceless_projector} was identified via the choice of the complement  $V_{1}^{m,n}$ of the traceless subspace. Then, by virtue of Theorem \bref{thm:tensor_decomposition} one concluded that $\scrP_{m,n}$ was a central idempotent in $C_{m,n}(N)$. In this section we go other way around, and describe an equivalent way of defining the traceless projector $\scrP_{m,n}$ as a particular central idempotent in the centraliser algebra $C_{m,n}(N)$.
    \vskip 2pt
    
    As a direct consequence of Theorem \bref{thm:cenraliser_generate}, the traceless subspace $V_{0}^{m,n}$ is $C_{m,n}(N)$-invariant, so its annihilator in $C_{m,n}(N)$ is a two-sided ideal which we denote $\mathcal{J} \subset C_{m,n}(N)$. There is a natural surjective homomorphism from $C_{m,n}(N)$ onto the the quotient algebra $C_{m,n}(N)\slash \mathcal{J}$, which is expressed via the short exact sequence of algebras \eqref{eq:intro_exact_sequence_C}. As explained in Section \bref{sec:intro}, the latter exact sequence splits by means of a uniquely-defined central idempotent in $C_{m,n}(N)$.

    \begin{theorem}\label{thm:splitting_idempotent_C}
        The traceless projector \eqref{eq:traceless_projector} is the splitting idempotent of the short exact sequence \eqref{eq:intro_exact_sequence_C}:
        \begin{equation*}
            C_{m,n}(N) \cong \scrP_{m,n}C_{m,n}(N) \oplus \mathcal{J} \quad\text{(direct sum of algebras).}
        \end{equation*}
    \end{theorem}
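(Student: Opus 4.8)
The plan is to observe that, since $\scrP_{m,n}$ is a projector it satisfies $\scrP_{m,n}^{2}=\scrP_{m,n}$, and by Proposition \bref{prop:P_central} it is central in $C_{m,n}(N)$; hence $\scrP_{m,n}$ and $1-\scrP_{m,n}$ are complementary central idempotents and
\[
    C_{m,n}(N) = \scrP_{m,n}C_{m,n}(N)\,\oplus\,(1-\scrP_{m,n})C_{m,n}(N)
\]
is automatically a direct sum of two-sided ideals, each a unital algebra with identity $\scrP_{m,n}$, resp.\ $1-\scrP_{m,n}$. It then remains only to identify the ideal $(1-\scrP_{m,n})C_{m,n}(N)$ with the annihilator $\mathcal{J}$ of $V_{0}^{m,n}$.

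For the inclusion $(1-\scrP_{m,n})C_{m,n}(N)\subseteq\mathcal{J}$: by centrality every element of this ideal has the form $x(1-\scrP_{m,n})$ with $x\in C_{m,n}(N)$, and since $\scrP_{m,n}$ restricts to the identity on $V_{0}^{m,n}$ one has $(1-\scrP_{m,n})v=0$ for $v\in V_{0}^{m,n}$, hence $x(1-\scrP_{m,n})$ annihilates $V_{0}^{m,n}$. For the reverse inclusion, take $y\in\mathcal{J}$; using centrality of $\scrP_{m,n}$ together with $\mathrm{Im}\,\scrP_{m,n}=V_{0}^{m,n}\subseteq\ker y$ one obtains $\scrP_{m,n}y=y\scrP_{m,n}=0$, whence $y=(1-\scrP_{m,n})y\in(1-\scrP_{m,n})C_{m,n}(N)$. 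Therefore $(1-\scrP_{m,n})C_{m,n}(N)=\mathcal{J}$ and $\mathcal{I}=\scrP_{m,n}C_{m,n}(N)$, which is precisely the asserted splitting of \eqref{eq:intro_exact_sequence_C}.

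There is no serious obstacle once Proposition \bref{prop:P_central} is in hand: all the content has been absorbed into the statement that $\scrP_{m,n}$ is a central idempotent of $C_{m,n}(N)$, which itself rested on the $GL(N)$-module decomposition of $V_{0}^{m,n}$ in Theorem \bref{thm:tensor_decomposition}. If one wishes to make the two summands explicit, one passes to the Wedderburn decomposition $C_{m,n}(N)\cong\bigoplus_{(\mu,\nu)\in\Lambda_{m,n}(N)}\mathrm{End}\big(M_{m,n}^{(\mu,\nu)}\big)$ coming from the bimodule decomposition \eqref{eq:decomposition_bimodule}: since $V_{0}^{m,n}$ is the sum of the isotypic components labelled by $\Lambda^{(0)}_{m,n}(N)$ (Theorem \bref{thm:tensor_decomposition}), the idempotent $\scrP_{m,n}$ is the identity on the corresponding blocks and zero on the remaining ones, so $\mathcal{I}=\scrP_{m,n}C_{m,n}(N)$ is the sum of the $\Lambda^{(0)}_{m,n}(N)$-blocks while $\mathcal{J}$ is the sum of the others.
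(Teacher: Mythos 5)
Your proof is correct and relies on the same two ingredients as the paper's: centrality of $\scrP_{m,n}$ (Proposition \bref{prop:P_central}) and the description of $\mathcal{J}$ as the annihilator of $V_0^{m,n}=\mathrm{Im}(\scrP_{m,n})$. The only difference is organizational: you start from the Peirce decomposition $C_{m,n}(N)=\scrP_{m,n}C_{m,n}(N)\oplus(1-\scrP_{m,n})C_{m,n}(N)$ and then prove the two-sided inclusion $(1-\scrP_{m,n})C_{m,n}(N)=\mathcal{J}$ directly, which is a slightly more streamlined route than the paper's (the paper checks $\mathcal{J}\cap\scrP_{m,n}C_{m,n}(N)=0$ and then constructs an explicit isomorphism $C_{m,n}(N)/\mathcal{J}\cong\scrP_{m,n}C_{m,n}(N)$); both arguments carry the same mathematical content.
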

    \begin{proof}
         Recall that the traceless projector \eqref{eq:traceless_projector} is central by Proposition \bref{prop:P_central}. Thus, to prove the assertion we show that: {\it (i)} the two ideals $\mathcal{J}$ and $\scrP_{m,n} C_{m,n}(N)$ are complementary in $C_{m,n}(N)$ (and thus annihilate each other), and {\it (ii)} $\scrP_{m,n} C_{m,n}(N) \cong C_{m,n}(N) \slash \mathcal{J}$.
        \vskip 2pt

        For the first point, let $x\in \scrP_{m,n} C_{m,n}(N)$, then one has $x = \scrP_{m,n} x = x\scrP_{m,n}$. If also $x \in \mathcal{J}$ then $x\scrP_{m,n} = 0$ since $\mathcal{J}$ annihilates the image of $\scrP_{m,n}$.
        \vskip 2pt

        To prove the second point, for any $x\in C_{m,n}(N)$ let $[x] \in C_{m,n}(N)\slash \mathcal{J}$ denote the corresponding equivalence class modulo $\mathcal{J}$ and consider the map
        \begin{equation}
        \def\arraystretch{1.4}
        \begin{array}{rccc}
            \varphi \,: & C_{m,n}(N)\slash \mathcal{J} & \to &  \scrP_{m,n} C_{m,n}(N) \\
            \hfill & [x] & \mapsto & \scrP_{m,n} x
        \end{array}
        \end{equation}
        The map $\varphi$ is well-defined since any two representatives differ by an element of $\mathcal{J}$ which annihilates the image of $\scrP_{m,n}$. Clearly, $\varphi$ is a surjective homomorphism of algebras. To prove that $\varphi$ is injective suppose that $\scrP_{m,n} x_{1} = \scrP_{m,n} x_{2}$ for some $x_{1},x_{2} \in C_{m,n}(N)$. Then $(x_1 - x_2) \scrP_{m,n} = 0$ which implies $x_{1} - x_{2} \in \mathcal{J}$ because $\mathrm{Im}(\scrP_{m,n}) = V_{0}^{m,n}$.
    \end{proof}

    \paragraph{Complementary ideal $\mathcal{I}$.} Let us consider the ideal $\mathcal{I} = \scrP_{m,n}C_{m,n}(N)$ in \eqref{eq:intro_exact_sequence_C} in a more detail. When $N \geqslant m+n$, one has $
    \mathcal{I} \cong \mathbb{C}[\Sn{m}\times \Sn{n}]$, which follows from the isomorphism $C_{m,n}(N) \cong B_{m,n}(N)$ \cite[Theorem 5.8]{BCHLLS} together with Theorem \bref{thm:wB_splitting_idempotent}. In what follows we prove the aforementioned isomorphism by analysing the traceless projection, and thus without referring to the representation theory of the walled Brauer algebra $B_{m,n}(N)$. On the other hand, for $N < m+n$ we show that $\mathcal{I}$ is not isomorphic to $\mathbb{C}[\Sn{m}\times \Sn{n}]$, but rather to a quotient thereof.
    \vskip 2pt
    
    For example, for $N = 2$ consider $V^{2,1}$ which has the following $\big(GL(2)\times GL(2),\mathbb{C}[\Sn{2}\times \Sn{1}]\big)$-bimodule decomposition (recall \eqref{eq:decomposition_GLGL}):
    \begin{equation}\label{eq:example_S_in_C}
    \ytableausetup{boxsize=4pt} 
        V^{2,1} \cong \big(U^{(\ydiagram{2},\varnothing)}\otimes U^{(\varnothing,\ydiagram{1})}\big)\otimes \big(L^{(\ydiagram{2})}\otimes L^{(\ydiagram{1})} \big) \,\oplus\, \big(U^{(\ydiagram{1,1},\varnothing)}\otimes U^{(\varnothing,\ydiagram{1})} \big)\otimes \big(L^{(\ydiagram{1,1})}\otimes L^{(\ydiagram{1})} \big).
    \end{equation}
    Upon restriction to the diagonal subgroup $GL(2)$ one has
    \begin{equation}
    \ytableausetup{boxsize=4pt} 
        \big(U^{(\ydiagram{2},\varnothing)}\otimes U^{(\varnothing,\ydiagram{1})}\big)\big\downarrow{}_{GL(N)} \cong U^{(\ydiagram{2},\ydiagram{1})} \oplus U^{(\ydiagram{1},\varnothing)}\quad \text{and}\quad \big(U^{(\ydiagram{1,1},\varnothing)} \otimes U^{(\varnothing,\ydiagram{1})}\big)\big\downarrow{}_{GL(N)}\cong U^{(\ydiagram{1},\varnothing)},
    \end{equation}
    which means that only the former component in \eqref{eq:example_S_in_C} admits a non-trivial traceless projection (note that $\ytableausetup{boxsize=5.5pt} \ell(\ydiagram{1,1}) + \ell(\ydiagram{1}) > 2$ and recall Theorem \bref{thm:tensor_decomposition}). As a result, one has $1 - \tau_{12} \in \mathcal{J}$, so the two-sided ideal generated by the latter combination is factored out from the image of $\mathbb{C}[\Sn{2}\times \Sn{1}]$ in $C_{2,1}(2)$.
    \vskip 2pt

    The idea explained in the above example allows us to prove the following assertion.
    \begin{proposition}\label{prop:image_Sn}
    For any $m,n\geqslant 1$ and $N\geqslant 1$,
    \begin{equation}
        \mathcal{I} \cong \mathbb{C}[\Sn{m}\times \Sn{n}]\quad \text{if and only if} \quad N \geqslant m+n.
    \end{equation}
    \end{proposition}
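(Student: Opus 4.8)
The plan is to compute the Wedderburn decompositions of both algebras and compare them. By Theorem~\bref{thm:splitting_idempotent_C}, $\mathcal{I}=\scrP_{m,n}C_{m,n}(N)\cong C_{m,n}(N)/\mathcal{J}$; since $C_{m,n}(N)$ is semisimple with simple modules $M_{m,n}^{(\mu,\nu)}$ indexed by $\Lambda_{m,n}(N)$ and $\mathcal{J}$ is the annihilator of $V_{0}^{m,n}$, Theorem~\bref{thm:tensor_decomposition} identifies $\mathcal{I}$ with the sum of the blocks labelled by $\Lambda^{(0)}_{m,n}(N)$, namely $\mathcal{I}\cong\bigoplus_{(\mu,\nu)\in\Lambda^{(0)}_{m,n}(N)}\mathrm{Mat}_{d_{\mu\nu}}(\mathbb{C})$ with $d_{\mu\nu}=\dim M_{m,n}^{(\mu,\nu)}\geqslant 1$. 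On the other hand $\mathbb{C}[\Sn{m}\times\Sn{n}]\cong\bigoplus_{(\mu,\nu)\in\mathcal{P}_{m,n}}\mathrm{Mat}_{\dim(L^{(\mu)}\otimes L^{(\nu)})}(\mathbb{C})$, the sum running over \emph{all} pairs of partitions of $m$ and of $n$. As two semisimple complex algebras are isomorphic exactly when their multisets of block sizes coincide, the statement reduces to comparing these index sets and block sizes.

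For ``only if'' I would argue by contraposition: if $N<m+n$, the pair $(\mu,\nu)=\bigl((1_{m}),(1_{n})\bigr)$ has $\ell(\mu)+\ell(\nu)=m+n>N$, so it lies in $\mathcal{P}_{m,n}$ but not in $\Lambda^{(0)}_{m,n}(N)$; hence $\mathcal{I}$ has strictly fewer Wedderburn blocks than $\mathbb{C}[\Sn{m}\times\Sn{n}]$ (none of them trivial, since $d_{\mu\nu}\geqslant 1$ by Theorem~\bref{thm:tensor_decomposition}), so $\dim Z(\mathcal{I})<\dim Z(\mathbb{C}[\Sn{m}\times\Sn{n}])$ and the two algebras are not isomorphic. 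I would also record here, to match the ``quotient thereof'' claim preceding the statement, that $\mathrm{Im}(\scrP_{m,n})=V_{0}^{m,n}\subseteq\ker(\mathrm{tr}_{\sfa\hsfb})$ together with centrality of $\scrP_{m,n}$ (Proposition~\bref{prop:P_central}) yields $\scrP_{m,n}\tau_{\sfa\hsfb}=\tau_{\sfa\hsfb}\scrP_{m,n}=0$ for every trace generator, so by Theorem~\bref{thm:cenraliser_generate} multiplication by $\scrP_{m,n}$ kills every word in the generators of $C_{m,n}(N)$ that involves a trace generator; thus $\mathcal{I}=\scrP_{m,n}C_{m,n}(N)=\scrP_{m,n}S_{m,n}(N)$ is a homomorphic image of $\mathbb{C}[\Sn{m}\times\Sn{n}]$.

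For the ``if'' direction, suppose $N\geqslant m+n$. Then every $\mu\vdash m$, $\nu\vdash n$ satisfy $\ell(\mu)+\ell(\nu)\leqslant m+n\leqslant N$, so $\Lambda^{(0)}_{m,n}(N)=\mathcal{P}_{m,n}$ and the index sets already agree; it then remains to prove $d_{\mu\nu}=\dim(L^{(\mu)}\otimes L^{(\nu)})$ for each $(\mu,\nu)\in\mathcal{P}_{m,n}$. By Proposition~\bref{prop:multiplicity_SsubB} the restriction of $M_{m,n}^{(\mu,\nu)}$ to $\mathbb{C}[\Sn{m}\times\Sn{n}]$ contains $L^{(\rho)}\otimes L^{(\sigma)}$ with multiplicity $c^{\mu\nu}_{\rho\sigma}(N)$; for $(\mu,\nu)\in\Lambda^{(0)}_{m,n}(N)$ Corollary~\bref{cor:estimate_multiplicities} forces $\mu\subseteq\rho$, $\nu\subseteq\sigma$ with $|\rho|-|\mu|=|\sigma|-|\nu|=0$, i.e.\ $\rho=\mu$ and $\sigma=\nu$, and by \eqref{eq:multiplicity_LR_coef} this surviving multiplicity equals the Littlewood--Richardson number $c^{\mu+\bar\nu}_{\mu\,\bar\nu}$, which equals $1$ because $s_{\mu+\bar\nu}$ is the unique dominance-maximal term of the product $s_{\mu}\,s_{\bar\nu}$. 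Hence $M_{m,n}^{(\mu,\nu)}\cong L^{(\mu)}\otimes L^{(\nu)}$ as a $\mathbb{C}[\Sn{m}\times\Sn{n}]$-module, $d_{\mu\nu}=\dim(L^{(\mu)}\otimes L^{(\nu)})$, the two displayed decompositions coincide, and $\mathcal{I}\cong\mathbb{C}[\Sn{m}\times\Sn{n}]$.

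I expect the main obstacle to be precisely this last computation --- obtaining $d_{\mu\nu}=\dim(L^{(\mu)}\otimes L^{(\nu)})$ for $N\geqslant m+n$ directly from the representation theory already assembled, rather than via $C_{m,n}(N)\cong B_{m,n}(N)$. It rests on pinning down the ``Cartan'' multiplicity $c^{\mu\nu}_{\mu\nu}(N)=1$ and on carefully distinguishing the constraint $\ell(\mu)+\ell(\nu)\leqslant N$ governing the blocks of $\mathcal{I}$ from the weaker constraint governing $S_{m,n}(N)$ --- the gap between the two being exactly what produces the threshold $N=m+n$ rather than $N=\max(m,n)$. The ``only if'' direction is, by contrast, immediate once $\mathcal{I}$ has been identified with the above direct sum of matrix algebras.
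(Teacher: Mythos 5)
Your proof is correct, but it takes a genuinely different route from the paper's. The paper's ``if'' direction shows $\mathcal{J}\cap S_{m,n}(N)=\{0\}$ by a Jacobson-radical argument: any $z\in\mathbb{C}[\Sn{m}\times\Sn{n}]$ whose image annihilates $V^{m,n}_0$ must kill all irreducibles $L^{(\rho)}\otimes L^{(\sigma)}$ (using that for $N\geqslant m+n$ every $U^{(\rho,\sigma)}\neq 0$ and $M^{(\rho,\sigma)}_{m,n}\cong L^{(\rho)}\otimes L^{(\sigma)}$, citing Koike, Theorem 1.1), hence $z=0$; from the resulting vector-space splitting $C_{m,n}(N)=S_{m,n}(N)\oplus\mathcal{J}$ the isomorphism $\mathcal{I}\cong\mathbb{C}[\Sn{m}\times\Sn{n}]$ follows. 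For ``only if'' the paper exhibits an explicit nonzero element: it picks $(\rho,\sigma)\in\mathcal{P}_{m,n}(N)$ with $\ell(\rho)+\ell(\sigma)>N$ and shows the image of the central idempotent $Z_\rho\otimes Z_\sigma$ lies in $\mathcal{J}$ yet is nonzero in $C_{m,n}(N)$, so passes to zero in the quotient. You instead compare Wedderburn decompositions outright: you re-derive $M^{(\mu,\nu)}_{m,n}\big\downarrow{}_{\mathbb{C}[\Sn{m}\times\Sn{n}]}\cong L^{(\mu)}\otimes L^{(\nu)}$ for $(\mu,\nu)\in\Lambda^{(0)}_{m,n}(N)$ from Proposition~\bref{prop:multiplicity_SsubB}, Corollary~\bref{cor:estimate_multiplicities} and the ``Cartan'' multiplicity $c^{\mu+\bar\nu}_{\mu\,\bar\nu}=1$ (rather than citing Koike), and then match index sets and block sizes; your ``only if'' is a one-line block/center count. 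Both arguments are sound and of comparable difficulty; yours is slightly more self-contained within the paper's own machinery and arguably cleaner on the ``only if'' side, while the paper's ``only if'' has the virtue of producing a concrete nonzero kernel element, which it then recycles to prove non-injectivity of the walled-Brauer homomorphism (Theorem~\bref{thm:Brauer_homomorphism}). Your side remark that $\mathcal{I}=\scrP_{m,n}S_{m,n}(N)$ (via $\scrP_{m,n}\tau_{\sfa\hsfb}=0$) is also correct and aligns with \eqref{eq:traceless_projector_X}.
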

    \begin{proof}
        Recall that $S_{m,n}(N) \subset C_{m,n}(N)$ denotes the image of $\mathbb{C}[\Sn{m}\times \Sn{n}]$. Let us first prove that for any $N\geqslant m+n$ one has the following vector-space decomposition of $C_{m,n}(N)$:
        \begin{equation}
            C_{m,n}(N) = S_{m,n}(N) \oplus \mathcal{J}.
        \end{equation}
        Indeed, let $f\in \mathcal{J}\cap S_{m,n}(N)$, then $f(V_{0}^{m,n}) = 0$ and at the same time $f$ results from an element $z\in \mathbb{C}[\Sn{m}\times \Sn{n}]$. We claim that $z$ annihilates all irreducible representations of $\mathbb{C}[\Sn{m}\times \Sn{n}]$. Indeed, note that for all $(\rho,\sigma) \in \mathcal{P}_{m,n}(N)$ one has $\ell(\rho) + \ell(\sigma) \leqslant N$ and recall \eqref{eq:decomposition_bimodule}: on one hand all $U^{(\rho,\sigma)} \neq \{0\}$, while on the other hand $M^{(\rho,\sigma)}_{m,n} \cong L^{(\rho)}\otimes L^{(\sigma)}$ by \cite[Theorem 1.1]{Koike_89}. As a result, $z$ belongs to the Jacobson radical of $\mathbb{C}[\Sn{m}\times \Sn{n}]$, but the latter is trivial since $\mathbb{C}[\Sn{m}\times \Sn{n}]$ is semisimple by Mashke's theorem.
        \vskip 2pt

        Now let $N < m + n$. There exists $(\rho,\sigma) \in \mathcal{P}_{m,n}(N)$ such that $\ell(\rho) + \ell(\sigma) > N$, so that $U^{(\rho,\sigma)} = \{0\}$. Take $Z_{\rho} \in \mathbb{C}\Sn{m}$ and $Z_{\sigma} \in \mathbb{C}\Sn{n}$ two central idempotents and apply $Z_{\rho}\otimes Z_{\sigma}\in \mathbb{C}[\Sn{m}\times\Sn{n}]$ to $V^{m,n}$. Here for any two permutations $u\in \Sn{m}$ and $v\in \Sn{n}$ one defines
        \begin{equation}\label{eq:otimes_SnSn}
            u\otimes v \in \Sn{m}\times \Sn{n}
        \end{equation}
        such that $u$ (respectively, $v$) permutes the $m$ leftmost (respectively, $n$ rightmost) elements, and extends the so-defined map to $\mathbb{C}[\Sn{m}\times \Sn{n}]$ by bilinearity. The resulting non-trivial $GL(N)\times GL(N)$-invariant subspace is traceless, so the image of $Z_{\rho}\otimes Z_{\sigma}$ in $C_{m,n}(N)$ is a non-zero element in $\mathcal{J}$ which is factored out from the image of $\mathbb{C}[\Sn{m}\times\Sn{n}]$ in $C_{m,n}(N)\slash\mathcal{J}$. 
    \end{proof}

    \subsection{Splitting idempotent in the walled Brauer algebra}\label{sec:wB_algebra}
    
    \paragraph{Walled Brauer algebra.} An efficient way of working with the centraliser algebra $C_{m,n}(N)$ consists in addressing its elements via a particular associative diagram algebra $B_{m,n}(N)$ referred to as {\it walled Brauer algebra}. In the sequel the parameter $N$ is allowed to take any complex value. The algebra $B_{m,n}(\delta)$ for $\delta\in \mathbb{C}$ was introduced in \cite{BCHLLS} as a particular subalgebra of the Brauer algebra $B_{m+n}(\delta)$ \cite{Brauer}, and independently in the study of link invariants in \cite{Turaev_1990}.
    \vskip 2pt
    
    The basis of $B_{m,n}(\delta)$ is constituted by {\it walled diagrams} introduced as follows. Consider two sets of $m+n$ horizontally aligned points (nodes) on the plane, forming $m+n$ vertically aligned pairs. The $m$ leftmost pairs are separated from the $n$ rightmost ones by a vertical dashed line referred to as {\it wall}. Each node is connected to exactly one other node by a convex line lying within the rectangle separated by the leftmost and the rightmost upper and lower nodes according to the following rule: the endpoints of a line belong to the same row if and only if the line crosses the wall. Any line crossing the wall is referred to as {\it arc}, and {\it passing line} otherwise. It is straightforward to see that if a diagram has $r\in \{0,1,\ldots,\min(m,n)\}$ arcs with endpoints in the upper row, then there are exactly $r$ arcs with endpoints in the lower row, and {\it vice versa}. In this respect, a diagram is said to have $r$ arcs if there are $r$ arcs with endpoints either in the upper or in the lower row. For example, consider the following two walled diagrams in $B_{4,3}(\delta)$ with two arcs each:
    \begin{equation}\label{eq:wB_example_diagrams}
    b_1 = \;\; \begin{tikzpicture}[baseline=(current bounding box.center)]
            \def \dx {0.7};
            \def \dy {0.5};
            \draw (0,\dy) -- (\dx*2,-\dy);
            \draw (\dx*2,\dy) -- (\dx,-\dy);
            \draw (\dx*6,\dy) -- (5*\dx,-\dy);
            \draw (\dx,\dy) .. controls (1.8*\dx,0.3*\dy) and (3.2*\dx,0.3*\dy) .. (4*\dx,\dy);
            \draw (3*\dx,\dy) .. controls (3.5*\dx,0.3*\dy) and (4.5*\dx,0.3*\dy) .. (5*\dx,\dy);
            \draw (0*\dx,-\dy) .. controls (1.5*\dx,-0.3*\dy) and (4.5*\dx,-0.3*\dy) .. (6*\dx,-\dy);
            \draw (3*\dx,-\dy) .. controls (3.2*\dx,-0.7*\dy) and (3.8*\dx,-0.7*\dy) .. (4*\dx,-\dy);
            \foreach \i in {0,...,6}
            \foreach \j in {-1,1}
            {\filldraw[white] (\dx*\i,\dy*\j) circle (1pt);
            \draw (\dx*\i,\dy*\j) circle (1pt);}
            \draw[dashed] (\dx*3.5,1.4*\dy) -- (\dx*3.5,-1.4*\dy);
        \end{tikzpicture}
        \quad\quad \text{and}\quad\quad
        b_2 = \;\; \begin{tikzpicture}[baseline=(current bounding box.center)]
            \def \dx {0.7};
            \def \dy {0.5};
            \draw (0,\dy) -- (0,-\dy);
            \draw (\dx,\dy) -- (2*\dx,-\dy);
            \draw (5*\dx,\dy) -- (6*\dx,-\dy);
            \draw (2*\dx,\dy) .. controls (2.5*\dx,0.3*\dy) and (3.5*\dx,0.3*\dy) .. (4*\dx,\dy);
            \draw (3*\dx,\dy) .. controls (3.5*\dx,0.3*\dy) and (5.5*\dx,0.3*\dy) .. (6*\dx,\dy);
            \draw (1*\dx,-\dy) .. controls (1.5*\dx,-0.3*\dy) and (4.5*\dx,-0.3*\dy) .. (5*\dx,-\dy);
            \draw (3*\dx,-\dy) .. controls (3.2*\dx,-0.7*\dy) and (3.8*\dx,-0.7*\dy) .. (4*\dx,-\dy);
            \foreach \i in {0,...,6}
            \foreach \j in {-1,1}
            {\filldraw[white] (\dx*\i,\dy*\j) circle (1pt);
            \draw (\dx*\i,\dy*\j) circle (1pt);}
            \draw[dashed] (\dx*3.5,1.4*\dy) -- (\dx*3.5,-1.4*\dy);
        \end{tikzpicture}
    \end{equation}
    The number of walled diagrams equals $(m+n)!$ and clearly does not depend on a particular value of the parameter $\delta$ \cite{BCHLLS}. A particularly simple proof of this fact is given in \cite{Nikitin_07} by establishing a bijection between walled diagrams and permutations of $m+n$ elements.
    \vskip 2pt

    Let $b^{\prime},b^{\prime\prime} \in B_{m,n}(N)$, the product $b^{\prime}b^{\prime\prime}$ of two walled diagrams is performed as follows: place $b^{\prime\prime}$ above $b^{\prime}$ and identify the lower nodes of the former with the upper nodes of the latter. Let $l$ be the number of closed loops, then $b^{\prime}b^{\prime\prime}$ is a walled diagram obtained by omitting the loops, straightening the resting lines and multiplying the result by $\delta^{l}$. For example, the product of the two walled diagrams in \eqref{eq:wB_example_diagrams} reads as follows:
    \begin{equation}\label{eq:B_product}
        b_{1} b_{2} = \;\;
        \begin{tikzpicture}[baseline=(current bounding box.center)]
            \def \dx {0.7};
            \def \dy {0.5};
            \draw (0,0) -- (\dx*2,-\dy);
            \draw (\dx*2,0) -- (\dx,-\dy);
            \draw (\dx*6,0) -- (5*\dx,-\dy);
            \draw (\dx,0) .. controls (1.8*\dx,-0.4*\dy) and (3.2*\dx,-0.4*\dy) .. (4*\dx,0);
            \draw (3*\dx,0) .. controls (3.5*\dx,-0.4*\dy) and (4.5*\dx,-0.4*\dy) .. (5*\dx,0);
            \draw (0*\dx,-\dy) .. controls (1.5*\dx,-0.3*\dy) and (4.5*\dx,-0.3*\dy) .. (6*\dx,-\dy);
            \draw (3*\dx,-\dy) .. controls (3.2*\dx,-0.7*\dy) and (3.8*\dx,-0.7*\dy) .. (4*\dx,-\dy);
            \draw (0,\dy) -- (0,0);
            \draw (\dx,\dy) -- (2*\dx,0);
            \draw (5*\dx,\dy) -- (6*\dx,0);
            \draw (2*\dx,\dy) .. controls (2.5*\dx,0.3*\dy) and (3.5*\dx,0.3*\dy) .. (4*\dx,\dy);
            \draw (3*\dx,\dy) .. controls (3.5*\dx,0.3*\dy) and (5.5*\dx,0.3*\dy) .. (6*\dx,\dy);
            \draw (1*\dx,0) .. controls (1.5*\dx,0.5*\dy) and (4.5*\dx,0.5*\dy) .. (5*\dx,0);
            \draw (3*\dx,0) .. controls (3.2*\dx,0.3*\dy) and (3.8*\dx,0.3*\dy) .. (4*\dx,0);
            \foreach \i in {0,...,6}
            \foreach \j in {-1,0,1}
            {\filldraw[white] (\dx*\i,\dy*\j) circle (1pt);
            \draw (\dx*\i,\dy*\j) circle (1pt);}
            \draw[dashed] (\dx*3.5,1.4*\dy) -- (\dx*3.5,-1.4*\dy);
        \end{tikzpicture}
        \;\; = \;\;
        \delta \;\;\begin{tikzpicture}[baseline=(current bounding box.center)]
            \def \dx {0.7};
            \def \dy {0.5};
            \draw (0,\dy) -- (\dx*2,-\dy);
            \draw (\dx,\dy) -- (\dx,-\dy);
            \draw (5*\dx,\dy) -- (5*\dx,-\dy);
            \draw (2*\dx,\dy) .. controls (2.5*\dx,0.3*\dy) and (3.5*\dx,0.3*\dy) .. (4*\dx,\dy);
            \draw (3*\dx,\dy) .. controls (3.5*\dx,0.3*\dy) and (5.5*\dx,0.3*\dy) .. (6*\dx,\dy);
            \draw (0*\dx,-\dy) .. controls (1.5*\dx,-0.3*\dy) and (4.5*\dx,-0.3*\dy) .. (6*\dx,-\dy);
            \draw (3*\dx,-\dy) .. controls (3.2*\dx,-0.7*\dy) and (3.8*\dx,-0.7*\dy) .. (4*\dx,-\dy);
            \foreach \i in {0,...,6}
            \foreach \j in {-1,1}
            {\filldraw[white] (\dx*\i,\dy*\j) circle (1pt);
            \draw (\dx*\i,\dy*\j) circle (1pt);}
            \draw[dashed] (\dx*3.5,1.4*\dy) -- (\dx*3.5,-1.4*\dy);
        \end{tikzpicture}
    \end{equation}

    \paragraph{Generating set.} Consider the following three types of elements of $B_{m,n}(\delta)$ defined for all $1 \leqslant \sfa,\sfb \leqslant m$ and $1^{\prime} \leqslant \hsfa,\hsfb \leqslant n^{\prime}$:
    \begin{equation}\label{eq:generators_wB}
    \def\arraystretch{1.7}
    \begin{array}{c}
        t_{\sfa\sfb} = \;\begin{tikzpicture}[baseline=(current bounding box.center)]
            \def \dx {0.4};
            \def \dy {0.5};
            \draw (0,\dy) -- (0,-\dy);
            \draw (2*\dx,\dy) -- (2*\dx,-\dy);
            \draw (4*\dx,\dy) -- (4*\dx,-\dy);
            \draw (3*\dx,\dy) -- (7*\dx,-\dy);
            \draw (7*\dx,\dy) -- (3*\dx,-\dy);
            \draw (6*\dx,\dy) -- (6*\dx,-\dy);
            \draw (8*\dx,\dy) -- (8*\dx,-\dy);
            \draw (10*\dx,\dy) -- (10*\dx,-\dy);
            \draw (11*\dx,\dy) -- (11*\dx,-\dy);
            \draw (13*\dx,\dy) -- (13*\dx,-\dy);
            \node at (3*\dx, -1.6*\dy) {\scalebox{0.8}{$\vphantom{1^{\prime}}\sfa$}};
            \node at (7*\dx, -1.6*\dy) {\scalebox{0.8}{$\vphantom{1^{\prime}}\sfb$}};
            \node at (0, -1.6*\dy) {\scalebox{0.8}{$\vphantom{1^{\prime}} 1$}};
            \node at (10*\dx, -1.6*\dy) {\scalebox{0.8}{$\vphantom{1^{\prime}} m$}};
            \node at (11*\dx, -1.6*\dy) {\scalebox{0.8}{$1^{\prime}$}};
            \node at (13*\dx, -1.6*\dy) {\scalebox{0.8}{$\vphantom{1^{\prime}} n^{\prime}$}};
            \foreach \i in {0,2,3,4,6,7,8,10,11,13}
            \foreach \j in {-1,1}
            {\filldraw[white] (\dx*\i,\dy*\j) circle (1pt);
            \draw (\dx*\i,\dy*\j) circle (1pt);}
            \draw[dashed] (\dx*10.5,1.4*\dy) -- (\dx*10.5,-1.4*\dy);
            \foreach \i in {1,5,9,12}
            \foreach \j in {-1,1}
            {\filldraw[black] (\dx*\i,\dy*\j) circle (0.3pt);
            \filldraw[black] (\dx*\i - 0.3*\dx,\dy*\j) circle (0.3pt);
            \filldraw[black] (\dx*\i + 0.3*\dx,\dy*\j) circle (0.3pt);}
        \end{tikzpicture}
        \quad\quad\quad t_{\hsfa\hsfb} = \;\begin{tikzpicture}[baseline=(current bounding box.center)]
            \def \dx {0.4};
            \def \dy {0.5};
            \draw (0,\dy) -- (0,-\dy);
            \draw (-2*\dx,\dy) -- (-2*\dx,-\dy);
            \draw (-4*\dx,\dy) -- (-4*\dx,-\dy);
            \draw (-3*\dx,\dy) -- (-7*\dx,-\dy);
            \draw (-7*\dx,\dy) -- (-3*\dx,-\dy);
            \draw (-6*\dx,\dy) -- (-6*\dx,-\dy);
            \draw (-8*\dx,\dy) -- (-8*\dx,-\dy);
            \draw (-10*\dx,\dy) -- (-10*\dx,-\dy);
            \draw (-11*\dx,\dy) -- (-11*\dx,-\dy);
            \draw (-13*\dx,\dy) -- (-13*\dx,-\dy);
            \node at (-3*\dx, -1.6*\dy) {\scalebox{0.8}{$\vphantom{1^{\prime}}\hsfb$}};
            \node at (-7*\dx, -1.6*\dy) {\scalebox{0.8}{$\vphantom{1^{\prime}}\hsfa$}};
            \node at (0, -1.6*\dy) {\scalebox{0.8}{$\vphantom{1^{\prime}} n^{\prime}$}};
            \node at (-10*\dx, -1.6*\dy) {\scalebox{0.8}{$\vphantom{1^{\prime}} 1^{\prime}$}};
            \node at (-11*\dx, -1.6*\dy) {\scalebox{0.8}{$\vphantom{1^{\prime}} m$}};
            \node at (-13*\dx, -1.6*\dy) {\scalebox{0.8}{$\vphantom{1^{\prime}} 1$}};
            \foreach \i in {0,-2,-3,-4,-6,-7,-8,-10,-11,-13}
            \foreach \j in {-1,1}
            {\filldraw[white] (\dx*\i,\dy*\j) circle (1pt);
            \draw (\dx*\i,\dy*\j) circle (1pt);}
            \draw[dashed] (-\dx*10.5,1.4*\dy) -- (-\dx*10.5,-1.4*\dy);
            \foreach \i in {-1,-5,-9,-12}
            \foreach \j in {-1,1}
            {\filldraw[black] (\dx*\i,\dy*\j) circle (0.3pt);
            \filldraw[black] (\dx*\i - 0.3*\dx,\dy*\j) circle (0.3pt);
            \filldraw[black] (\dx*\i + 0.3*\dx,\dy*\j) circle (0.3pt);}
        \end{tikzpicture}\\
        t_{\sfa\hsfb} = \;\begin{tikzpicture}[baseline=(current bounding box.center)]
            \def \dx {0.4};
            \def \dy {0.5};
            \draw (0,\dy) -- (0,-\dy);
            \draw (2*\dx,\dy) -- (2*\dx,-\dy);
            \draw (4*\dx,\dy) -- (4*\dx,-\dy);
            \draw (3*\dx,\dy) .. controls (4.5*\dx,0.3*\dy) and (8.5*\dx,0.3*\dy) .. (10*\dx,\dy);
            \draw (3*\dx,-\dy) .. controls (4.5*\dx,-0.3*\dy) and (8.5*\dx,-0.3*\dy) .. (10*\dx,-\dy);
            \draw (6*\dx,\dy) -- (6*\dx,-\dy);
            \draw (7*\dx,\dy) -- (7*\dx,-\dy);
            \draw (9*\dx,\dy) -- (9*\dx,-\dy);
            \draw (11*\dx,\dy) -- (11*\dx,-\dy);
            \draw (13*\dx,\dy) -- (13*\dx,-\dy);
            \node at (3*\dx, -1.6*\dy) {\scalebox{0.8}{$\vphantom{1^{\prime}}\sfa$}};
            \node at (10*\dx, -1.6*\dy) {\scalebox{0.8}{$\vphantom{1^{\prime}}\hsfb$}};
            \node at (0, -1.6*\dy) {\scalebox{0.8}{$\vphantom{1^{\prime}} 1$}};
            \node at (6*\dx, -1.6*\dy) {\scalebox{0.8}{$\vphantom{1^{\prime}} m$}};
            \node at (7*\dx, -1.6*\dy) {\scalebox{0.8}{$1^{\prime}$}};
            \node at (13*\dx, -1.6*\dy) {\scalebox{0.8}{$\vphantom{1^{\prime}} n^{\prime}$}};
            \foreach \i in {0,2,3,4,6,7,9,10,11,13}
            \foreach \j in {-1,1}
            {\filldraw[white] (\dx*\i,\dy*\j) circle (1pt);
            \draw (\dx*\i,\dy*\j) circle (1pt);}
            \draw[dashed] (\dx*6.5,1.4*\dy) -- (\dx*6.5,-1.4*\dy);
            \foreach \i in {1,5,8,12}
            \foreach \j in {-1,1}
            {\filldraw[black] (\dx*\i,\dy*\j) circle (0.3pt);
            \filldraw[black] (\dx*\i - 0.3*\dx,\dy*\j) circle (0.3pt);
            \filldraw[black] (\dx*\i + 0.3*\dx,\dy*\j) circle (0.3pt);}
        \end{tikzpicture}
    \end{array}
    \end{equation} 
    
    \noindent The following result is well-known \cite{BCHLLS}.
    \begin{proposition}\label{prop:wB_generate}
        For any $m,n \geqslant 1$ and $\delta\in \mathbb{C}$ the elements \eqref{eq:generators_wB} generate the algebra $B_{m,n}(\delta)$.
    \end{proposition}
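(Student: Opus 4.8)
The plan is to reduce the statement to individual diagrams and then induct on the number of arcs. Since the walled diagrams form a basis of $B_{m,n}(\delta)$, it suffices to prove that every walled diagram $d$ lies in the subalgebra $\mathcal{A}\subseteq B_{m,n}(\delta)$ generated by the elements \eqref{eq:generators_wB}. I would argue by induction on the number $r\in\{0,1,\ldots,\min(m,n)\}$ of arcs of $d$. In the base case $r=0$ no line of $d$ crosses the wall, so $d$ is the permutation diagram attached to some element of $\Sn{m}\times\Sn{n}$; as the transposition diagrams $t_{\sfa\sfb}$ and $t_{\hsfa\hsfb}$ generate $\Sn{m}\times\Sn{n}$ and composing permutation diagrams reproduces multiplication in this group (no closed loop ever arising), such a $d$ is a product of generators and belongs to $\mathcal{A}$.

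For the inductive step let $r\geqslant 1$, and assume every walled diagram with fewer than $r$ arcs lies in $\mathcal{A}$. First I would bring one arc in the top row and one arc in the bottom row of $d$ into a standard position. Picking a top arc of $d$ and a suitable permutation diagram $\sigma\in\Sn{m}\times\Sn{n}$, the diagram $\sigma d$ has a top arc joining node $m$ to node $1^{\prime}$; left multiplication by $\sigma$ only relabels the top endpoints of the arcs and passing lines of $d$, so the number of arcs is unchanged and no loop is created. Similarly I would choose a permutation diagram $\tau$ so that $\sigma d\,\tau$ has a bottom arc joining $m$ to $1^{\prime}$, using that right multiplication by $\tau$ relabels only bottom endpoints and leaves the top arcs intact. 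Since $\sigma,\tau$ and their inverses are products of the generators $t_{\sfa\sfb},t_{\hsfa\hsfb}$, from $d=\sigma^{-1}(\sigma d\,\tau)\tau^{-1}$ it is enough to treat $d':=\sigma d\,\tau$, which now has a top arc and a bottom arc both joining $m$ to $1^{\prime}$.

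For such a $d'$ the key step is the factorisation $d'=\widehat{d'}\,t_{m,1^{\prime}}$, where $\widehat{d'}$ is the walled diagram obtained from $d'$ by deleting the nodes $m$ and $1^{\prime}$ from both rows and inserting vertical strands in those two columns. Indeed, stacking $t_{m,1^{\prime}}$ on top of $\widehat{d'}$: the two new vertical strands of $\widehat{d'}$ join the bottom arc of $t_{m,1^{\prime}}$ to reproduce the bottom arc $m\leftrightarrow 1^{\prime}$, the top arc of $t_{m,1^{\prime}}$ becomes the top arc $m\leftrightarrow 1^{\prime}$, and the remaining strands pass straight through; crucially no closed loop is formed, so no power of $\delta$ appears. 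Now $\widehat{d'}$ has $r-1$ arcs, hence lies in $\mathcal{A}$ by the induction hypothesis, and $t_{m,1^{\prime}}$ is one of the generators \eqref{eq:generators_wB}; therefore $d'\in\mathcal{A}$, and consequently $d\in\mathcal{A}$. This completes the induction and hence the proof.

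I expect the only delicate point to be the purely diagrammatic bookkeeping of the last two paragraphs: that pre-composition with a permutation diagram relabels only the top endpoints of all arcs and passing lines (and post-composition only the bottom ones), and that $d'$ factors as $\widehat{d'}\,t_{m,1^{\prime}}$. Both facts are geometrically transparent but should be verified with care — in particular one must check that no spurious closed loop arises at any stage, which is exactly what makes the argument valid uniformly in $\delta$ (including $\delta=0$), since no division by $\delta$ is ever needed. An essentially equivalent alternative would be to first establish the normal form $d=\sigma_1\,e_r\,\sigma_2$ with permutation diagrams $\sigma_1,\sigma_2$ and $e_r:=t_{m,1^{\prime}}t_{m-1,2^{\prime}}\cdots t_{m-r+1,r^{\prime}}$ the standard diagram with $r$ parallel arcs, but the induction above avoids spelling that normal form out.
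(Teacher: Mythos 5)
The paper does not actually prove Proposition \ref{prop:wB_generate}: it states it as well-known and cites \cite{BCHLLS}. So there is no paper proof to compare against; what you have written is, in substance, the standard argument one would find in such a reference, and the overall strategy (induct on the number $r$ of arcs, conjugate by permutations to put one top arc and one bottom arc in a normal position, and peel off a single generator $t_{m,1^{\prime}}$ leaving a diagram with $r-1$ arcs) is correct, complete, and applies uniformly in $\delta$ exactly as you claim, because no loop is ever closed and hence no scalar $\delta$ ever appears.

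There is, however, one bookkeeping slip relative to the paper's stated convention. The paper defines $b^{\prime}b^{\prime\prime}$ by placing $b^{\prime\prime}$ \emph{above} $b^{\prime}$ (see the worked example \eqref{eq:B_product}, where $b_1$ sits below $b_2$). With this convention, in the product $\sigma d$ the diagram $d$ is on top and $\sigma$ is on the bottom, so left multiplication by a permutation relabels the \emph{bottom} endpoints of $d$ and leaves the top row untouched, and dually right multiplication relabels the \emph{top} endpoints — the opposite of what you wrote. Your final factorisation step is internally consistent with the paper's convention (you stack $t_{m,1^{\prime}}$ on top of $\widehat{d'}$ in forming $\widehat{d'}\,t_{m,1^{\prime}}$, and that computation is moreover convention-independent because $t_{m,1^{\prime}}$ is invariant under the up-down flip and $\widehat{d'}$ has only vertical strands at positions $m$ and $1^{\prime}$), so the only fix needed is to interchange the roles of left and right multiplication in the conjugation paragraph. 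With that correction the proof is complete.
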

    
    \paragraph{Action on mixed tensors.} For any $m,n\geqslant 1$ and any integer $N\geqslant 1$ the elements of $B_{m,n}(N)$ act on $V^{m,n}$ via the surjective homomorphism:
    \begin{equation}\label{eq:hom_Br}
        \mathfrak{b} : B_{m,n}(N) \to C_{m,n}(N).
    \end{equation}
    By Proposition \bref{prop:wB_generate}, to define \eqref{eq:hom_Br} it is sufficient to give the $\mathfrak{b}$-image of the generating elements \eqref{eq:generators_wB}:
    \begin{equation}
        \mathfrak{b}(t_{\sfa\sfb}) = \tau_{\sfa\sfb}\,,\quad \mathfrak{b}(t_{\hsfa\hsfb}) = \tau_{\hsfa\hsfb}\quad \text{and}\quad \mathfrak{b}(t_{\sfa\hsfb}) = \tau_{\sfa\hsfb}\,.
    \end{equation}
    The action of a walled diagram in $V^{m,n}$ is conveniently summarised via the following rule:
    \begin{itemize}
        \item[1)] Label the $m$ leftmost (respectively, $n$ rightmost) nodes in the upper row by the $m$ factors $V$ (respectively, $n$ factors $V^{*}$) in $V^{m,n}$.
        \item[2)] For each two nodes in the upper row joined by an arc, apply contraction to the corresponding pair $V,V^{*}$. 
        \item[3)] For the remaining upper nodes, permute the corresponding factors $V$ and $V^{*}$ by following the passing lines downwards.
        \item[4)] Insert the invariant \eqref{eq:cotrace} at positions occupied by endpoints of the arcs in the lower row.
    \end{itemize}
    It is also useful to describe how the above rule translates to tensor components $t^{i_{1}\ldots i_{m}}{}_{j_{1}\ldots j_{n}}$ in \eqref{eq:mixed_basis}:
    \begin{itemize}
        \item[1)] Label the $m$ leftmost (respectively, $n$ rightmost) nodes in the lower row by $i_{1},\ldots, i_{m}$ (respectively, by $j_{1},\ldots, j_{n}$) from left to right.
        \item[2)] For each two nodes at positions $1 \leqslant \sfa\leqslant m$ and $1 \leqslant \hsfb\leqslant n$ in the upper row (respectively, in the lower row) joined by an arc, contract the corresponding pair of indices, $t^{\ldots i_{\sfa}\ldots}{}_{\ldots j_{\hsfb}\ldots} \mapsto t^{\ldots k\ldots}{}_{\ldots k\ldots}$ (respectively, multiply the components by the Kronecker delta, $t^{\ldots}{}_{\ldots} \mapsto \delta^{i_{\sfa}}_{j_{\hsfb}}t^{\ldots}{}_{\ldots}$).
        \item[3)] Label the remaining nodes in the upper row by the remaining indices $i_{\sfa}$ and $j_{\hsfb}$ in the lower row by moving the latter upwards along the passing lines. Insert the resulting arrangement of indices from the upper row into the corresponding positions in the tensor component.
    \end{itemize}
    For example, the diagram $b_{1}$ in \eqref{eq:wB_example_diagrams} transform the components of a tensor in $V^{4,3}$ as follows:
    \begin{equation}
        b_{1} \,:\, t^{i_1 i_2 i_3 i_4}{}_{j_1 j_2 j_3} \mapsto \delta^{i_1}_{j_3} \delta^{i_4}_{j_1} \,t^{i_3 k i_2 l}{}_{k l j_2}.
    \end{equation}
    
    As a direct consequence of \cite[Lemma 1.2]{Koike_89}, in combination with Proposition \bref{prop:wB_generate}, the homomorphism \eqref{eq:hom_Br} is surjective. It is also injective if $N \geqslant m + n$ \cite[Theorem 5.8]{BCHLLS}. The author is unaware of a proof of the `only if' part in the literature, so we also prove that the latter condition is necessary for \eqref{eq:hom_Br} to be injective by analysing the traceless subspace (see Appendix \bref{app:proof_wB_hom} for proof).
    \begin{theorem}\label{thm:Brauer_homomorphism}
        For all $m,n \geqslant 1$ and $N\geqslant 1$ the homomorphism \eqref{eq:hom_Br} is surjective. It is also injective if and only if $N \geqslant m+n$. 
    \end{theorem}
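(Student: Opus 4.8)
The plan is to treat the three assertions of the theorem in turn. Surjectivity of $\mathfrak{b}$ is the easy part: by Theorem~\ref{thm:cenraliser_generate} the endomorphisms $\tau_{\sfa\sfb},\tau_{\hsfa\hsfb},\tau_{\sfa\hsfb}$ generate $C_{m,n}(N)$, by Proposition~\ref{prop:wB_generate} the diagrams $t_{\sfa\sfb},t_{\hsfa\hsfb},t_{\sfa\hsfb}$ generate $B_{m,n}(N)$, and $\mathfrak{b}$ maps the latter set onto the former. The ``if'' half of the injectivity statement I would simply cite from \cite[Theorem 5.8]{BCHLLS}. All the remaining work is in the ``only if'' half: I must show that $\mathfrak{b}$ has nonzero kernel whenever $N\leqslant m+n-1$. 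Note that the obvious candidate for a kernel element --- a full antisymmetriser inside the arc-free subalgebra $\mathbb{C}[\Sn{m}\times\Sn{n}]\subset B_{m,n}(N)$ --- only works when $\max(m,n)>N$; in the range $\max(m,n)\leqslant N\leqslant m+n-1$ that subalgebra actually embeds into $C_{m,n}(N)$, so a different, dimension-counting argument is needed.

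Since $\mathfrak{b}$ is surjective and the diagram basis of $B_{m,n}(N)$ has $(m+n)!$ elements, it suffices to prove $\dim_{\mathbb{C}}C_{m,n}(N)<(m+n)!$ for $N\leqslant m+n-1$. I would start from $\dim_{\mathbb{C}}C_{m,n}(N)=\sum_{(\mu,\nu)\in\Lambda_{m,n}(N)}d_{\mu\nu}^{2}$, which follows from the semisimplicity of the centraliser algebra together with the bimodule decomposition \eqref{eq:decomposition_bimodule}, and then bound each multiplicity by an $N$-independent quantity. By Proposition~\ref{prop:multiplicity_SsubB}, and since $V^{m,n}$ carries only the $\mathbb{C}[\Sn{m}\times\Sn{n}]$-irreducibles labelled by $\mathcal{P}_{m,n}(N)$ (cf.\ \eqref{eq:decomposition_GLGL}), one has $d_{\mu\nu}=\sum_{(\rho,\sigma)\in\mathcal{P}_{m,n}(N)}c^{\mu\nu}_{\rho\sigma}(N)\,\dim L^{(\rho)}\dim L^{(\sigma)}$; applying the estimate of Lemma~\ref{lem:estimate_multiplicities} coefficient by coefficient and then dropping the length restrictions on $\rho,\sigma$ gives $d_{\mu\nu}\leqslant\delta_{\mu\nu}$, where
\begin{equation*}
\delta_{\mu\nu}:=\sum_{\rho\vdash m,\;\sigma\vdash n}\big(\textstyle\sum_{\beta\in\mathcal{P}}c^{\rho}_{\mu\beta}c^{\sigma}_{\nu\beta}\big)\dim L^{(\rho)}\dim L^{(\sigma)}
\end{equation*}
does not depend on $N$. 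Running the same computation at any $N'\geqslant m+n$ --- where the estimate of Lemma~\ref{lem:estimate_multiplicities} is saturated and $\mathcal{P}_{m,n}(N')$ is the full set of pairs --- identifies $\delta_{\mu\nu}$ with the genuine multiplicity of $U^{(\mu,\nu)}$ in $V^{m,n}$ for $\dim V=N'$, whence by \cite[Theorem 5.8]{BCHLLS} and semisimplicity of $C_{m,n}(N')$ one gets $\sum_{(\mu,\nu)}\delta_{\mu\nu}^{2}=\dim C_{m,n}(N')=\dim B_{m,n}(N')=(m+n)!$, the sum running over all pairs with $m-|\mu|=n-|\nu|\geqslant 0$.

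To conclude I would observe that for $N\leqslant m+n-1$ the pair $\big((1_{m}),(1_{n})\big)$ is admissible (it has $m-|\mu|=n-|\nu|=0$) yet lies outside $\Lambda_{m,n}(N)$, since $\ell\big((1_{m})\big)+\ell\big((1_{n})\big)=m+n>N$ --- in other words, the traceless subspace $V^{m,n}_{0}$ simply loses the irreducible $U^{((1_{m}),(1_{n}))}$ as soon as $N<m+n$. As the $\beta=\varnothing$, $\rho=\mu$, $\sigma=\nu$ term in the definition of $\delta_{\mu\nu}$ shows $\delta_{\mu\nu}\geqslant\dim L^{(\mu)}\dim L^{(\nu)}\geqslant 1$ for every admissible pair, comparing the two sums yields
\begin{equation*}
\dim_{\mathbb{C}}C_{m,n}(N)\;=\;\sum_{(\mu,\nu)\in\Lambda_{m,n}(N)}d_{\mu\nu}^{2}\;\leqslant\;\sum_{(\mu,\nu)\in\Lambda_{m,n}(N)}\delta_{\mu\nu}^{2}\;\leqslant\;(m+n)!-\delta_{(1_{m})(1_{n})}^{2}\;<\;(m+n)!,
\end{equation*}
so $\mathfrak{b}$ cannot be injective.

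The main obstacle is the uniform multiplicity bound $d_{\mu\nu}\leqslant\delta_{\mu\nu}$: one must know that the branching multiplicities of $C_{m,n}(N)$ never overshoot their stable values at large $N$, which is exactly what Lemma~\ref{lem:estimate_multiplicities} (combined with Proposition~\ref{prop:multiplicity_SsubB}) provides. Once that is granted the rest is bookkeeping with Young diagrams, the conceptual content being that the loss of $U^{((1_{m}),(1_{n}))}$ from $V^{m,n}_{0}$ is not compensated by any enlargement of the surviving multiplicities, so the total dimension strictly drops below $(m+n)!$.
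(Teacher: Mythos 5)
Your argument is correct, but it takes a genuinely different route from the paper's. The paper's proof of the ``only if'' direction is constructive: it picks a pair $(\rho,\sigma)\in\mathcal{P}_{m,n}(N)$ with $\ell(\rho)+\ell(\sigma)>N$ (possible precisely when $N<m+n$), takes a preimage $Q_{m,n}=1+X_{m,n}$ of the traceless projector $\scrP_{m,n}$ with $X_{m,n}\in J$ (using \eqref{eq:traceless_projector_X}), and exhibits $q=Q_{m,n}\,(Z^{(\rho)}\otimes Z^{(\sigma)})$ as a nonzero element of $\ker\mathfrak{b}$: it maps to zero because $U^{(\rho,\sigma)}=\{0\}$ kills the traceless projection of the $(\rho,\sigma)$-isotypic component, and it is nonzero because the vector-space decomposition $B_{m,n}(N)=\mathbb{C}[\Sn{m}\times\Sn{n}]\oplus J$ of \eqref{eq:wB_direct_sum_SJ} separates the $Z^{(\rho)}\otimes Z^{(\sigma)}$ summand from the rest. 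Your dimension-count instead proves $\dim C_{m,n}(N)<(m+n)!=\dim B_{m,n}(N)$ by identifying $\delta_{\mu\nu}$ with the dimensions of the walled-Brauer cell modules (whose squares sum to $(m+n)!$), bounding each $d_{\mu\nu}$ by $\delta_{\mu\nu}$ via Proposition~\ref{prop:multiplicity_SsubB} and Lemma~\ref{lem:estimate_multiplicities}, and observing that the pair $\big((1_m),(1_n)\big)$ drops out of $\Lambda_{m,n}(N)$ when $N<m+n$. Conceptually the two proofs point at the same phenomenon (loss of the fully antisymmetric label once $N<m+n$), but the paper's version is lighter-weight --- it needs only the structure of $\scrP_{m,n}$ and the direct-sum \eqref{eq:wB_direct_sum_SJ}, not the multiplicity comparison of Lemma~\ref{lem:estimate_multiplicities} --- and produces an explicit nonzero kernel element rather than a non-constructive dimension inequality. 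Your argument is sound (in particular, the reduction $d_{\mu\nu}\leqslant\delta_{\mu\nu}$ and the identification $\sum\delta_{\mu\nu}^{2}=(m+n)!$ at stable $N'$ are both justified), but be aware that it leans on Lemma~\ref{lem:estimate_multiplicities}, whose proof is one of the longer appendices of the paper, whereas the published proof of Theorem~\ref{thm:Brauer_homomorphism} manages to avoid it.
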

    Describing the kernel of \eqref{eq:hom_Br} constitutes an interesting separated problem. The analogous problem for Brauer algebras was solved in \cite{Gavarini_LitRich}.

    \paragraph{Subalgebra generated by permutations and the complementary ideal.} 

    For any $\delta\in \mathbb{C}$, any diagram in $B_{m,n}(\delta)$ with no line crossing the wall is a permutation of the $m$ leftmost and $n$ rightmost nodes, and thus is an element of $\Sn{m}\times \Sn{n}$. Clearly the product of two permutations is again a permutation, so one identifies the corresponding subalgebra
    \begin{equation}\label{eq:Sn_in_wB}
        \mathbb{C}[\Sn{m}\times\Sn{n}] \subset B_{m,n}(\delta).
    \end{equation}
    
    In a complementary way, let $J \subset B_{m,n}(\delta)$ be the subspace spanned by diagrams with at least one arc, so that
    \begin{equation}\label{eq:wB_direct_sum_SJ}
        B_{m,n}(\delta) \cong \mathbb{C}[\Sn{m}\times\Sn{n}] \oplus J\quad\text{(isomorphism of vector spaces).} 
    \end{equation}
    Given a product of two diagrams (for example, $b_1 b_2$ for the diagrams in \eqref{eq:wB_example_diagrams}), the arcs with endpoints in the lower row of the former diagram (of $b_{1}$) and in the upper row of the latter (of $b_{2}$) stay intact, so $J$ is a two-sided ideal in $B_{m,n}(N)$. As a matter of a routine exercise one has the following:
    \begin{equation}\label{eq:isomorphism_J}
        B_{m,n}(\delta)\slash J \cong \mathbb{C}[\Sn{m}\times\Sn{n}]\quad \text{(isomorphsm of algebras).}
    \end{equation}

    \paragraph{Splitting idempotent in $B_{m,n}(\delta)$.} 
    In what follows we show that the direct-sum decomposition \eqref{eq:wB_direct_sum_SJ} also makes sense at the level of algebras when $B_{m,n}(\delta)$ is semisimple, and construct the corresponding central idempotent. The criterion of semisimplicity of $B_{m,n}(\delta)$ was given in \cite[Theorem 6.3]{CDDM}, and is summarised as follows.
    \begin{theorem}\label{thm:wB_semisimple}
        Let $m,n \geqslant 1$ and $\delta\in \mathbb{C}$, the algebra $B_{m,n}(\delta)$ is semisimple if and only if one of the following conditions holds:
            \begin{itemize}
                \item[1)] $\delta\notin \mathbb{Z}$, or otherwise
                \item[2)] $\delta\in \mathbb{Z}$ such that $|\delta| \geqslant m + n - 1$, or otherwise
                \item[3)] $\delta = 0$ and $(m,n) = (1,2), (1,3), (2,1)$ or $(3,1)$.
            \end{itemize}
    \end{theorem}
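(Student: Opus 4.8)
\emph{Proof idea.} The natural framework is cellularity: $B_{m,n}(\delta)$ is a cellular algebra whose cell modules are the $\Delta^{(\mu,\nu)}_{m,n}$ labelled above, each carrying a symmetric (Gram) bilinear form, so by the Graham--Lehrer criterion $B_{m,n}(\delta)$ is semisimple if and only if every one of the finitely many Gram determinants $G^{(\mu,\nu)}_{m,n}(\delta)\in\mathbb{C}[\delta]$ is non-zero. The plan is to control the zeros of these polynomials by means of the walled-Brauer Casimir $z_{m,n}\in Z\big(B_{m,n}(\delta)\big)$ built from the Jucys--Murphy elements of \cite{JungKim_2020} (whose specialisation at $\delta=N$ acts on $V^{m,n}$ as the $\scrC_{m,n}$ of Section~\ref{sec:traceless_rep_theory}): one checks, by the abstract counterpart of Proposition~\ref{prop:eigenvalues_Casimir} --- triangularity of $z_{m,n}$ on a cellular basis, or specialisation to tensor space together with polynomiality in $\delta$ --- that $z_{m,n}$ acts on $\Delta^{(\mu,\nu)}_{m,n}$ with single generalised eigenvalue $\chi^{(\mu,\nu)}:=c(\mu)+c(\nu)+\delta\,|\nu|$. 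Since any composition factor of $\Delta^{(\mu,\nu)}_{m,n}$ must carry the same $z_{m,n}$-eigenvalue, a zero of $G^{(\mu,\nu)}_{m,n}$ at $\delta=\delta_{0}$ forces two cell-module labels $(\mu,\nu)$, $(\mu',\nu')$ of \emph{different} defect to satisfy $\chi^{(\mu,\nu)}=\chi^{(\mu',\nu')}$ at $\delta_{0}$, i.e.\ $c(\mu'/\mu)+c(\nu'/\nu)+\delta_{0}\,(|\nu'|-|\nu|)=0$ with $\mu\subseteq\mu'$, $\nu\subseteq\nu'$.

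From here the three ``semisimple'' regimes follow. If $\delta\notin\mathbb{Z}$, the displayed equation reads $\delta_{0}\cdot(|\nu'|-|\nu|)\in\mathbb{Z}$ with $|\nu'|\neq|\nu|$, which is impossible; combined with the recursion afforded by the one-arc element $e_{1}\in B_{m,n}(\delta)$ (with $e_{1}B_{m,n}(\delta)e_{1}\cong B_{m-1,n-1}(\delta)$ up to a factor of $\delta$, and $B_{m,n}(\delta)/J\cong\mathbb{C}[\Sn{m}\times\Sn{n}]$, recall \eqref{eq:isomorphism_J}), which shows each $G^{(\mu,\nu)}_{m,n}$ is a non-zero polynomial --- non-zero since $B_{m,n}(N)\cong C_{m,n}(N)$ is semisimple for $N\geqslant m+n$ by Theorem~\ref{thm:Brauer_homomorphism} --- with integer roots, one gets semisimplicity for all $\delta\notin\mathbb{Z}$. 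If $\delta\in\mathbb{Z}$ with $|\delta|\geqslant m+n-1$, the scalars $\chi^{(\mu,\nu)}$ are pairwise distinct over all labels: if two labels have defects differing by $j\geqslant1$, then $\chi^{(\mu',\nu')}-\chi^{(\mu,\nu)}=c(\mu'/\mu)+c(\nu'/\nu)+j\delta$ with $|c(\mu'/\mu)+c(\nu'/\nu)|\leqslant j(m-1)+j(n-1)=j(m+n-2)<j(m+n-1)\leqslant|j\delta|$, so it cannot vanish; distinct $z_{m,n}$-eigenvalues make each $\Delta^{(\mu,\nu)}_{m,n}$ equal to its own head, hence simple, hence $B_{m,n}(\delta)$ semisimple. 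Finally, for $\delta=0$ and $(m,n)\in\{(1,2),(1,3),(2,1),(3,1)\}$ one verifies semisimplicity directly: the only relevant coincidence would require an addable box of $\mu$ (of size $m-r$) and one of $\nu$ (of size $n-r$) with contents summing to $0$; with $m=1$ forcing $\mu=(1)$, whose unique addable-box content is $0$, and since no partition of $2$ or $3$ has an addable box of content $0$, no such coincidence occurs, so all cell modules are simple --- a finite check that can equivalently be phrased as $(m+n)!=\sum_{(\mu,\nu)}\big(\dim L^{(\mu,\nu)}_{m,n}\big)^{2}$ via the explicit enumeration of walled diagrams.

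The converse --- that $B_{m,n}(\delta)$ fails to be semisimple for each remaining value, i.e.\ for $\delta\in\mathbb{Z}$ with $1\leqslant|\delta|\leqslant m+n-2$, and for $\delta=0$ outside the four exceptional pairs --- is the substantive part and is where I expect the main obstacle. The necessary numerical condition (existence of $\mu$ of size $m-r$ with an addable box of content $c$ and $\nu$ of size $n-r$ with an addable box of content $-\delta-c$, for some $1\leqslant r\leqslant\min(m,n)$) is easy to meet in these ranges by taking hook-shaped $\mu$ and $\nu$; the hard part is the \emph{converse} implication --- that such a coincidence actually forces the corresponding Gram form to degenerate --- which amounts to the full linkage principle for $B_{m,n}(\delta)$, equivalently to pinning down the root set of each $G^{(\mu,\nu)}_{m,n}$ exactly. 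This is best carried out either by the recursive Gram-determinant computation afforded by the tower structure $e_{1}B_{m,n}(\delta)e_{1}\cong B_{m-1,n-1}(\delta)$ (in the spirit of the tower-of-recollement formalism underlying \cite{CDDM}), or by transporting the question to the Schur--Weyl-dual side of the general linear Lie superalgebra $\mathfrak{gl}(p|q)$ with $\delta=p-q$; either way this is essentially the content of \cite[Theorem~6.3]{CDDM}, which one ultimately invokes. Everything else --- the cellular set-up, the three semisimple regimes above, and the small finite check at $\delta=0$ --- is routine by comparison.
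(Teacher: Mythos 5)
The paper does not prove this theorem at all: Theorem~\ref{thm:wB_semisimple} is stated as a quotation of \cite[Theorem~6.3]{CDDM}, which is the semisimplicity criterion of Cox--De Visscher--Doty--Martin, and the surrounding text makes clear that the author is simply importing that result. There is therefore no in-paper proof against which to compare your argument.

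Your sketch is a reasonable outline of how one \emph{would} prove it via cellularity and central characters, and you are candid that the converse --- non-semisimplicity for $\delta\in\mathbb{Z}$ with $1\leqslant|\delta|\leqslant m+n-2$, and for $\delta=0$ outside the four exceptional pairs --- still invokes \cite[Theorem~6.3]{CDDM}. That is the substantive content of the theorem, so the sketch is ultimately circular as a stand-alone proof; it is, however, a fair description of the structure of the CDDM argument. Two smaller gaps in the forward direction are worth flagging. First, for $\delta\notin\mathbb{Z}$ your central-character coincidence equation $c(\mu'/\mu)+c(\nu'/\nu)+\delta_{0}(|\nu'|-|\nu|)=0$ only gives $\delta_{0}\in\mathbb{Q}$ when $|\nu'|\neq|\nu|$ (e.g.\ $\delta_{0}=1/2$ with $|\nu'|-|\nu|=2$ is not excluded); you patch this by asserting that the Gram determinants have integer roots, but that assertion is not established in the sketch and is itself a non-trivial part of the CDDM analysis. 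Second, the claim that $z_{m,n}$ acts on each cell module $\Delta^{(\mu,\nu)}_{m,n}$ by the scalar $\chi^{(\mu,\nu)}$ requires a triangularity statement for the Jucys--Murphy elements of \cite{JungKim_2020} on a cellular basis of the \emph{abstract} algebra, not just the specialisation to tensor space; this is true but deserves a citation or a proof, since Proposition~\ref{prop:eigenvalues_Casimir} in the paper concerns the image in $C_{m,n}(N)$, not $B_{m,n}(\delta)$ itself.

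Given all this, the honest verdict is: the approach is sound but it is not an alternative proof --- it is a reconstruction of the CDDM argument that still needs CDDM for the hard half. The paper's choice to cite the result directly is the more economical presentation.
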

    
    In the case when the walled Brauer algebra is semisimple the following exact sequence of algebras splits:
    \begin{equation}\label{eq:exact_sequence_B}
        0 \xrightarrow{\phantom{m}} J \xrightarrow{\phantom{m}} B_{m,n}(\delta) \xrightarrow{\phantom{m}} \mathbb{C}\left[\Sn{m}\times\Sn{n}\right] \xrightarrow{\phantom{m}} 0,
    \end{equation}
    where one recalls the isomorphism \eqref{eq:isomorphism_J}. The corresponding splitting idempotent is constructed in a similar fashion as the traceless projector \eqref{eq:traceless_projector}. For all $r\in \{0,1,\ldots,\min(m,n)\}$ define the set of pairs of partitions $\Lambda^{(r)}_{m,n}$ as in \eqref{eq:index_modules_GL} by omitting the restriction  $\ell(\mu) + \ell(\nu) \leqslant N$, and set
    \begin{equation}\label{eq:index_reps_wB}
        \displaystyle\Lambda_{m,n} = \bigcup_{r = 0}^{\min(m,n)} \Lambda^{(r)}_{m,n}.
    \end{equation}
    Next, along similar lines as in \eqref{eq:operator_A_image} define the analogue of $\scrA_{m,n}$:
    \begin{equation}\label{eq:wB_A}
        A_{m,n} = \sum_{1 \leqslant \sfa \leqslant m, \, 1^{\prime} \leqslant \hsfb \leqslant n^{\prime}} t_{\sfa\hsfb}.
    \end{equation}
    The following assertion is the analogue of Theorem \bref{thm:eigenvalues} (see Appendix \bref{app:proof_wB_spec_A} for proof). 
    \begin{proposition}\label{prop:wB_spec_A}
        For any $m,n \geqslant 1$ and $\delta\in \mathbb{C}$ as in Theorem \ref{thm:wB_semisimple}, the left regular action of $A_{m,n}$ in $B_{m,n}(\delta)$ is diagonalisable. The elements $a\in\mathrm{spec}(A_{m,n})$ are of the form
        \begin{equation}\label{eq:eigenvalues_A_wB}
            a = r \delta + c(\rho\slash\mu) + c(\sigma\slash\nu)
        \end{equation}
        for all $r \in \{0,1,\ldots,\min(m,n)\}$, $(\mu,\nu) \in \Lambda^{(r)}_{m,n}$ and $(\rho,\sigma)\in \mathcal{P}_{m,n}$ such that $c^{\rho}_{\mu\beta}c^{\sigma}_{\nu\beta} \neq 0$ for some $\beta\subseteq \rho\cap\sigma$. In particular, $a = 0$ only for $r = 0$.
    \end{proposition}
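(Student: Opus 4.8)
The plan is to imitate the proof of Theorem~\ref{thm:eigenvalues}, with the cellular structure of $B_{m,n}(\delta)$ playing the role that the decomposition of $V^{m,n}$ played there. Put $L_{m}=\sum_{1\leqslant\sfa<\sfb\leqslant m}t_{\sfa\sfb}$ and $R_{n}=\sum_{1^{\prime}\leqslant\hsfa<\hsfb\leqslant n^{\prime}}t_{\hsfa\hsfb}$, i.e.\ the images in $\mathbb{C}[\Sn{m}\times\Sn{n}]\subset B_{m,n}(\delta)$ (cf.\ \eqref{eq:Sn_in_wB}) of the sums of all transpositions in $\Sn{m}$ and in $\Sn{n}$, and set $C_{m,n}=L_{m}+R_{n}-A_{m,n}+\delta n$, the walled-Brauer analogue of \eqref{eq:JMC}; under \eqref{eq:hom_Br} one has $\mathfrak{b}(C_{m,n})=\scrC_{m,n}$, $\mathfrak{b}(L_{m})=\scrL_{m}$, $\mathfrak{b}(R_{n})=\scrR_{n}$.

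First I would show that $C_{m,n}\in Z\big(B_{m,n}(\delta)\big)$ for every $\delta\in\mathbb{C}$. For a fixed walled diagram $b$, expand $C_{m,n}b-bC_{m,n}$ in the diagram basis: its coefficients are polynomials in $\delta$. For each integer $N\geqslant m+n$ the map \eqref{eq:hom_Br} is an algebra isomorphism (Theorem~\ref{thm:Brauer_homomorphism}), so $C_{m,n}b-bC_{m,n}$ maps to $\scrC_{m,n}\mathfrak{b}(b)-\mathfrak{b}(b)\scrC_{m,n}=0$ by Lemma~\ref{lem:Casimir}, hence it vanishes at $\delta=N$. A polynomial vanishing at infinitely many points is identically zero, so $C_{m,n}b=bC_{m,n}$ for all $\delta$. (Alternatively, this is the central element of \cite{JungKim_2020}.)

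Next I would compute the scalar by which $C_{m,n}$ acts on the cell module $\Delta^{(\mu,\nu)}_{m,n}$, $(\mu,\nu)\in\Lambda^{(r)}_{m,n}$. Consider $p_{\mu\nu}(\delta)=\big(\dim\Delta^{(\mu,\nu)}_{m,n}\big)^{-1}\,\mathrm{tr}_{\Delta^{(\mu,\nu)}_{m,n}}(C_{m,n})$: this is a polynomial in $\delta$, since the cellular basis is $\delta$-independent and the cellular action of any diagram has polynomial-in-$\delta$ matrix entries. For $\delta=N\geqslant m+n$ one has $B_{m,n}(N)\cong C_{m,n}(N)$, which is semisimple; there $\Delta^{(\mu,\nu)}_{m,n}$ is the irreducible $M^{(\mu,\nu)}_{m,n}$ (note $\ell(\mu)+\ell(\nu)\leqslant m+n\leqslant N$, so $\Lambda_{m,n}(N)=\Lambda_{m,n}$), on which the central element $C_{m,n}$ acts by the scalar $p_{\mu\nu}(N)=c(\mu)+c(\nu)+N|\nu|$ by Proposition~\ref{prop:eigenvalues_Casimir}. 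Hence $p_{\mu\nu}(\delta)=c(\mu)+c(\nu)+\delta|\nu|$ identically. Whenever $B_{m,n}(\delta)$ is semisimple the cell module $\Delta^{(\mu,\nu)}_{m,n}$ is irreducible, so $C_{m,n}$ acts on it by a scalar, necessarily $c(\mu)+c(\nu)+\delta|\nu|$. I expect this middle step to be the main obstacle: pinning down the action of $C_{m,n}$ on an arbitrary cell module when $\delta\notin\mathbb{Z}$ (no tensor model available), for which polynomiality-in-$\delta$ of the cellular action plus interpolation from the large-integer regime is the key device, with the technical point that for $N\geqslant m+n$ the cell modules really are the irreducibles $M^{(\mu,\nu)}_{m,n}$ occurring in $V^{m,n}$.

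Finally I would assemble the spectrum. In the semisimple regime the left regular module of $B_{m,n}(\delta)$ is $\bigoplus_{(\mu,\nu)\in\Lambda_{m,n}}\big(\Delta^{(\mu,\nu)}_{m,n}\big)^{\oplus\dim\Delta^{(\mu,\nu)}_{m,n}}$, so it suffices to diagonalise $A_{m,n}$ on each cell module. On $\Delta^{(\mu,\nu)}_{m,n}$ one has $A_{m,n}=L_{m}+R_{n}-\big(c(\mu)+c(\nu)+\delta|\nu|\big)+\delta n$, and $L_{m}$ (resp.\ $R_{n}$) is the central sum of transpositions of $\Sn{m}$ (resp.\ $\Sn{n}$), hence diagonalisable and acting by $c(\rho)$ (resp.\ $c(\sigma)$) on each $L^{(\rho)}\otimes L^{(\sigma)}$-isotypic summand of $\Delta^{(\mu,\nu)}_{m,n}\big\downarrow{}_{\mathbb{C}[\Sn{m}\times\Sn{n}]}$ (classical; see \cite{Ram_seminormal}). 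By \cite[Theorem~6.1]{CDDM} the summand $L^{(\rho)}\otimes L^{(\sigma)}$ occurs with multiplicity $\sum_{\beta}c^{\rho}_{\mu\beta}c^{\sigma}_{\nu\beta}$, which is nonzero exactly when $c^{\rho}_{\mu\beta}c^{\sigma}_{\nu\beta}\neq0$ for some $\beta$, in which case $\beta\subseteq\rho\cap\sigma$ and $|\rho|-|\mu|=|\sigma|-|\nu|=|\beta|=r$. Consequently $A_{m,n}$ is diagonalisable on each cell module, hence on $B_{m,n}(\delta)$, and its eigenvalue on that summand is $c(\rho)+c(\sigma)-c(\mu)-c(\nu)+\delta(n-|\nu|)=r\delta+c(\rho\slash\mu)+c(\sigma\slash\nu)$, as claimed. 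For the last assertion, $r=0$ forces $\beta=\varnothing$, $\mu=\rho$, $\nu=\sigma$ and $a=0$; for $r\geqslant1$, every box of a partition of $s$ has content of absolute value at most $s-1$, so $|c(\rho\slash\mu)+c(\sigma\slash\nu)|\leqslant r(m-1)+r(n-1)=r(m+n-2)$, whence $|a|\geqslant r\big(|\delta|-m-n+2\big)\geqslant r>0$ when $|\delta|\geqslant m+n-1$, while $a\notin\mathbb{Z}$ when $\delta\notin\mathbb{Z}$, and the four exceptional cases $\delta=0$ of Theorem~\ref{thm:wB_semisimple} are checked by hand.
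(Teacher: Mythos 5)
Your strategy faithfully mirrors the paper's appendix proof: decompose the left regular module via the cellular structure, restrict each cell module $\Delta^{(\mu,\nu)}_{m,n}$ to $\mathbb{C}[\Sn{m}\times\Sn{n}]$ using \cite[Theorem~6.1]{CDDM}, and read off the eigenvalue of $A_{m,n}$ on each $L^{(\rho)}\otimes L^{(\sigma)}$-isotypic component. The one genuine difference is in how the eigenvalue formula on a cell module is obtained: the paper simply cites \cite[Lemma~4.1]{CDDM} for the diagonalisability of $A_{m,n}$ together with the eigenvalues, whereas you re-derive this by introducing $C_{m,n}=L_m+R_n-A_{m,n}+\delta n$ and establishing its centrality and its scalar action $c(\mu)+c(\nu)+\delta|\nu|$ on $\Delta^{(\mu,\nu)}_{m,n}$ by polynomial interpolation in $\delta$ from the integer regime $\delta=N\geqslant m+n$. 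The interpolation device is sound (the relevant matrix entries are polynomials in $\delta$ vanishing at infinitely many integers) and makes the appendix self-contained, at the cost of more work than the paper's single citation.

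There is a real gap at the final step, and it sits at exactly the point the paper is also cavalier. You justify $a\neq 0$ for $r\geqslant 1$ and $\delta\notin\mathbb{Z}$ by asserting ``$a\notin\mathbb{Z}$''. But $a=r\delta+(\text{integer})$, so $a$ is an integer whenever $r\delta\in\mathbb{Z}$, which happens for non-integer rational $\delta$ when $r$ is a multiple of the denominator. Concretely, take $m=3$, $n=2$, $\delta=\tfrac12$ (a semisimple point by Theorem~\ref{thm:wB_semisimple}), $r=2$, $(\mu,\nu)=\big((1),\varnothing\big)$, $(\rho,\sigma)=\big((2,1),(1,1)\big)$, $\beta=(1,1)\subseteq\rho\cap\sigma$: then $c^{\rho}_{\mu\beta}c^{\sigma}_{\nu\beta}=1\neq 0$, and
\begin{equation*}
a = 2\cdot\tfrac12 + c\big((2,1)/(1)\big) + c\big((1,1)/\varnothing\big) = 1 + 0 + (-1) = 0,
\end{equation*}
so the conclusion ``$a=0$ only for $r=0$'' fails on this datum. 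Your bound $|a|\geqslant r(|\delta|-m-n+2)$ correctly handles $|\delta|\geqslant m+n-1$, and irrational $\delta$ is fine, but rational $\delta\notin\mathbb{Z}$ with $|\delta|<m+n-1$ is not covered by either argument. Note that the paper's own appendix dismisses this case with ``straightforward for $\delta\in\mathbb{C}\setminus\mathbb{Z}$'', which is only valid for irrational $\delta$; so the gap you leave is the same one the paper leaves, and the example above suggests it is not merely a gap in the argument but a problem with the assertion itself at such $\delta$.
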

    \vskip 2pt

    As a consequence, for the element
    \begin{equation}\label{eq:wB_splitting_idempotent}
        P_{m,n} = \prod_{a \in \mathrm{spec}(A_{m,n})^{\times}} \left(1 - \dfrac{1}{a} A_{m,n}\right) \in B_{m,n}(\delta)
    \end{equation}
    one has the following assertion.
    \begin{theorem}\label{thm:wB_splitting_idempotent}
        Let $m,n \geqslant 1$ and $\delta\in \mathbb{C}$ be as in Theorem \ref{thm:wB_semisimple}. Then the element \eqref{eq:wB_splitting_idempotent} is the splitting idempotent of the short exact sequence of algebras \eqref{eq:exact_sequence_B}. In particular,
        \begin{itemize}
            \item[1)] $P_{m,n} \in Z\big(B_{m,n}(\delta)\big)$, and
            \item[2)] $P_{m,n} B_{m,n}(\delta) \cong \mathbb{C}[\Sn{m}\times \Sn{n}]$ (isomorphism of algebras).
        \end{itemize}
    \end{theorem}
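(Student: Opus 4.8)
The plan is to follow the template of Theorem~\ref{thm:splitting_idempotent_C}, transported to the diagram algebra. Since $B_{m,n}(\delta)$ is semisimple by Theorem~\ref{thm:wB_semisimple}, it is a direct sum of full matrix algebras whose blocks are indexed by $\Lambda_{m,n}$ of \eqref{eq:index_reps_wB}, with the cell modules $\Delta^{(\mu,\nu)}_{m,n}$ a complete set of pairwise non-isomorphic simple modules. I would establish, in order: (i) that $P_{m,n}$ of \eqref{eq:wB_splitting_idempotent} is a central idempotent which acts as the identity on $\Delta^{(\mu,\nu)}_{m,n}$ for $(\mu,\nu)\in\Lambda^{(0)}_{m,n}$ and as $0$ on every other cell module, and (ii) that the complementary ideal $(1-P_{m,n})B_{m,n}(\delta)$ is exactly $J$. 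Once (i) and (ii) are in place, $B_{m,n}(\delta)=P_{m,n}B_{m,n}(\delta)\oplus J$ as a direct sum of algebras, so $P_{m,n}$ splits \eqref{eq:exact_sequence_B}; assertion~1) is then part of~(i), and assertion~2) follows because the composite $P_{m,n}B_{m,n}(\delta)\hookrightarrow B_{m,n}(\delta)\twoheadrightarrow B_{m,n}(\delta)\slash J\cong\mathbb{C}[\Sn{m}\times\Sn{n}]$ of \eqref{eq:isomorphism_J} is an isomorphism of algebras.

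For step~(i), I would first note that since the left regular action of $A_{m,n}$ is diagonalisable (Proposition~\ref{prop:wB_spec_A}), the element $P_{m,n}$ of \eqref{eq:wB_splitting_idempotent} acts in the faithful left regular representation as the spectral projector onto $\ker(A_{m,n})$, and is therefore an idempotent of $B_{m,n}(\delta)$. Decomposing the left regular module into cell modules and reading off from the proof of Proposition~\ref{prop:wB_spec_A} that the eigenvalues of $A_{m,n}$ on $\Delta^{(\mu,\nu)}_{m,n}$ are the numbers $r\delta+c(\rho\slash\mu)+c(\sigma\slash\nu)$ attached to that $(\mu,\nu)$, the last clause of that proposition gives that $0$ is an eigenvalue of $A_{m,n}$ on $\Delta^{(\mu,\nu)}_{m,n}$ precisely when $(\mu,\nu)\in\Lambda^{(0)}_{m,n}$; by diagonalisability $A_{m,n}$ then vanishes identically on those modules. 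Hence $P_{m,n}$ acts as the identity on $\Delta^{(\mu,\nu)}_{m,n}$ with $(\mu,\nu)\in\Lambda^{(0)}_{m,n}$ and as $0$ on all other cell modules. An element of a semisimple algebra that acts as a scalar on every simple module lies in the centre, so $P_{m,n}\in Z\big(B_{m,n}(\delta)\big)$, which is~1).

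For step~(ii), let $q\colon B_{m,n}(\delta)\to B_{m,n}(\delta)\slash J\cong\mathbb{C}[\Sn{m}\times\Sn{n}]$ be the quotient map of \eqref{eq:isomorphism_J}. Every trace generator $t_{\sfa\hsfb}$ of \eqref{eq:generators_wB} is a diagram carrying an arc, hence lies in $J$, so \eqref{eq:wB_A} gives $q(A_{m,n})=0$ and therefore $q(P_{m,n})=1$. Thus $1-P_{m,n}\in J$, and since it is a central idempotent, $(1-P_{m,n})B_{m,n}(\delta)\subseteq J$. For the reverse inclusion I would argue at the level of blocks: inflation along $q$ identifies the simple $\mathbb{C}[\Sn{m}\times\Sn{n}]$-modules with the simple $B_{m,n}(\delta)$-modules annihilated by $J$, of which there are $|\mathcal{P}_{m}|\cdot|\mathcal{P}_{n}|=|\Lambda^{(0)}_{m,n}|$; on each of these $P_{m,n}$ acts as $q(P_{m,n})=1$, so by step~(i) each is a $\Delta^{(\mu,\nu)}_{m,n}$ with $(\mu,\nu)\in\Lambda^{(0)}_{m,n}$, and equality of cardinalities forces the simples killed by $J$ to be exactly $\{\Delta^{(\mu,\nu)}_{m,n}:(\mu,\nu)\in\Lambda^{(0)}_{m,n}\}$. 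Since a two-sided ideal of a semisimple algebra is the sum of the blocks indexed by the simple modules on which it acts non-trivially, both $J$ and $(1-P_{m,n})B_{m,n}(\delta)$ equal $\bigoplus_{(\mu,\nu)\notin\Lambda^{(0)}_{m,n}}\mathrm{End}\big(\Delta^{(\mu,\nu)}_{m,n}\big)$, hence they coincide.

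I expect the main obstacle to be step~(i), more precisely the passage from the set $\mathrm{spec}(A_{m,n})$ supplied by the statement of Proposition~\ref{prop:wB_spec_A} to the finer, module-by-module statement that the eigenvalue $0$ of $A_{m,n}$ is confined to the cell modules with $r=0$ — this is what makes $P_{m,n}$ the block idempotent of the $r=0$ summands and, in turn, central. That information is exactly what the proof of Proposition~\ref{prop:wB_spec_A} produces (the cell-module decomposition of the left regular module together with the action of the walled-Brauer Jucys--Murphy elements), so the remaining content of step~(i) is bookkeeping; step~(ii) and the concluding identification are then formal consequences of semisimplicity.
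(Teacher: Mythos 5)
Your argument is correct, but it is organised rather differently from the paper's, which makes for a useful comparison. You prove centrality and idempotency by the module-theoretic route: you use the faithfulness of the left regular representation, identify $P_{m,n}$ as the spectral projector onto $\ker(A_{m,n})$, show via the cell-module decomposition that $P_{m,n}$ acts as a scalar ($0$ or $1$) on every simple, and conclude it is a block idempotent. You then pin down which blocks by the observation that $q(A_{m,n})=0$ forces $1-P_{m,n}\in J$, combined with a cardinality count $|\Lambda^{(0)}_{m,n}|=p(m)\,p(n)$ against the simples of $B_{m,n}(\delta)/J\cong\mathbb{C}[\Sn{m}\times\Sn{n}]$. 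The paper instead argues diagrammatically: it proves $P_{m,n}J=0$ directly by appealing to a structural result of Cox--De~Visscher--Doty--Martin (their Proposition 3.1, relating diagrams with $r$ arcs to cell modules with at least $r$ arcs), obtains $JP_{m,n}=0$ by applying the flip anti-automorphism of the walled Brauer algebra (which preserves $A_{m,n}$ and stabilises $J$), establishes centrality by the additional observation that $A_{m,n}=\sum_{s\in\Sn{m}\times\Sn{n}} s\,t_{1,1^{\prime}}s^{-1}$ commutes with the permutation subalgebra, and finishes assertion~2) with an explicit isomorphism $B_{m,n}(\delta)/J\to P_{m,n}B_{m,n}(\delta)$, $[x]\mapsto P_{m,n}x$. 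Your route trades those concrete diagrammatic inputs for semisimple-algebra bookkeeping and a counting argument; the paper's route is more self-contained at the diagram level and avoids invoking the number of simple $\mathbb{C}[\Sn{m}\times\Sn{n}]$-modules. Both are valid.

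One phrasing point worth tightening: in step~(i) you write that $0$ is an eigenvalue of $A_{m,n}$ on $\Delta^{(\mu,\nu)}_{m,n}$ precisely when $(\mu,\nu)\in\Lambda^{(0)}_{m,n}$, and conclude ``by diagonalisability $A_{m,n}$ then vanishes identically on those modules.'' As written, this does not follow; you also need the stronger fact that on an $r=0$ cell module $0$ is the \emph{only} eigenvalue. This is indeed true, because for $r=0$ the only $(\rho,\sigma)$ with $c^{\rho}_{\mu\beta}c^{\sigma}_{\nu\beta}\neq 0$ is $(\rho,\sigma)=(\mu,\nu)$ with $\beta=\varnothing$, giving $c(\rho\slash\mu)+c(\sigma\slash\nu)=0$; but the inference should be stated as coming from the eigenvalue formula, not from diagonalisability alone.
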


    \begin{proof}
        By recalling the isomorphism of vector spaces \eqref{eq:isomorphism_J} one is left to prove that the same isomorphism holds at the level of algebras. First, let us prove that \eqref{eq:wB_splitting_idempotent} annihilates $J$ from both sides and commutes with $\mathbb{C}[\Sn{m}\times\Sn{n}]$, which imples that $P_{m,n}\in Z\big(B_{m,n}(\delta)\big)$.
        \vskip 2pt
        
        One has $P_{m,n}J = 0$ by design of \eqref{eq:wB_splitting_idempotent}. Indeed, by \cite[Proposition 3.1]{CDDM} any diagram with $r$ arcs corresponds to a sum of elements of simple $B_{m,n}(\delta)$-modules labelled by $\Lambda^{(k)}_{m,n}$ with $k \geqslant r$, which are annihilated by $P_{m,n}$ unless $k = 0$. As a consequence, $JP_{m,n} = 0$ follows by applying the involutive anti-automorphism of $B_{m,n}$ which consists in reflecting walled diagrams with respect to the horizontal middle line. This anti-automorphism stabilises the ideal $J$ and preserves the element $A_{m,n}$ (and thus $P_{m,n}$). As a result, from the factorised form of \eqref{eq:wB_splitting_idempotent} one concludes that $P_{m,n}P_{m,n} = P_{m,n}$.
        \vskip 2pt

        To see that $P_{m,n}$ commutes with $\mathbb{C}[\Sn{m}\times\Sn{n}]$ it suffices to note that
        \begin{equation}
            A_{m,n} = \sum_{s\in \Sn{m}\times \Sn{n}} s t_{1,1^{\prime}} s^{-1}.
        \end{equation}
        
        Finally, we show that $P_{m,n}B_{m,n}(\delta) \cong B_{m,n}(\delta)\slash J$, so that the assertion follows by recalling \eqref{eq:isomorphism_J}. Consider the following homomorphism of algebras
        \begin{equation}
        \def\arraystretch{1.4}
        \begin{array}{rccc}
            \phi \,: & B_{m,n}(\delta)\slash J & \to & P_{m,n}B_{m,n}(\delta) \\
            \hfill & [x] & \mapsto & P_{m,n}x
        \end{array}  
        \end{equation}
        Since $P_{m,n}$ annihilates $J$ from both sides the above map is well-defined. Clearly, the above homomorphism is surjective. To prove that it is also injective note that for any $y \in \mathbb{C}[\Sn{m}\times\Sn{n}]$ one has $P_{m,n}y - y \in J$ (expand \eqref{eq:wB_splitting_idempotent} and recall that $J$ is an ideal). By recalling \eqref{eq:wB_direct_sum_SJ}, $P_{m,n}(x_{1} - x_{2}) = 0$ implies $x_{1} - x_{2} \in J$.
    \end{proof}

\paragraph{Relation to the traceless projector.} When the algebra $B_{m,n}(\delta)$ is semisimple, any finite-dimensional module over $B_{m,n}(\delta)$ decomposes as a direct sum of simple modules. With a slight abuse of notation, for any $(\mu,\nu) \in \Lambda_{m,n}$ denote $M^{(\mu,\nu)}_{m,n}$ the corresponding simple $B_{m,n}(\delta)$-module. The following assertion is reminiscent of Theorem \bref{thm:tensor_decomposition}, and relates particular representations of the walled Brauer algebra to subspaces $V_{0}^{m,n}$ (the proof goes along the same lines as for Proposition \bref{prop:wB_spec_A}).

\begin{proposition}\label{prop:projector_wB_to_C}
    Let $m,n \geqslant 1$ and $\delta\in \mathbb{C}$ be as in Theorem \ref{thm:wB_semisimple}, and let $\mathcal{M}$ be a $B_{m,n}(\delta)$-module.
    \begin{equation}
        \text{If}\quad \mathcal{M} \cong \bigoplus_{(\mu,\nu) \in \Lambda_{m,n}} \big(M^{(\mu,\nu)}_{m,n}\big)^{\oplus g_{\mu\nu}}\quad \text{then}\quad P_{m,n}\mathcal{M} \cong \bigoplus_{(\mu,\nu) \in \Lambda^{(0)}_{m,n}} \big(M^{(\mu,\nu)}_{m,n}\big)^{\oplus g_{\mu\nu}}.
    \end{equation}
    In particular, if $\delta = N \in \mathbb{Z}$ such that $N \geqslant m + n - 1$ one has
    \begin{equation}
        \mathfrak{b}(P_{m,n}) = \scrP_{m,n}.
    \end{equation}
\end{proposition}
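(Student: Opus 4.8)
The plan is to deduce the module statement directly from the structural description of the splitting idempotent in Theorem \ref{thm:wB_splitting_idempotent}, and then to apply it to the specific module $V^{m,n}$. Since $P_{m,n}$ is a central idempotent and $B_{m,n}(\delta)$ is semisimple, every finite-dimensional $B_{m,n}(\delta)$-module decomposes into simples, and $P_{m,n}$ acts on each simple $M^{(\mu,\nu)}_{m,n}$ by a scalar $\lambda_{\mu\nu}\in\{0,1\}$ (Schur's lemma, using $P_{m,n}^{2}=P_{m,n}$). Hence $P_{m,n}\mathcal{M}$ is the sum of the isotypic components with $\lambda_{\mu\nu}=1$, and the first claim reduces to the assertion that $\lambda_{\mu\nu}=1$ precisely when $(\mu,\nu)\in\Lambda^{(0)}_{m,n}$.

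To identify the $\lambda_{\mu\nu}$ the quickest route uses $P_{m,n}B_{m,n}(\delta)\cong B_{m,n}(\delta)/J$ (from the proof of Theorem \ref{thm:wB_splitting_idempotent}) together with \eqref{eq:isomorphism_J}: $\lambda_{\mu\nu}=1$ exactly when $M^{(\mu,\nu)}_{m,n}$ is annihilated by $J$, i.e. when it is inflated from $\mathbb{C}[\Sn{m}\times\Sn{n}]$ along $B_{m,n}(\delta)\to B_{m,n}(\delta)/J$; by the cellular structure of the walled Brauer algebra (\cite{CDDM}) these are precisely the $r=0$ cell modules, labelled by $\Lambda^{(0)}_{m,n}$. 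Equivalently — and this is the variant matching the suggested analogy with Proposition \ref{prop:wB_spec_A} — arguing as in the proof of that proposition one shows that $A_{m,n}=\sum_{\sfa,\hsfb}t_{\sfa\hsfb}$ acts diagonalisably on $M^{(\mu,\nu)}_{m,n}$ with eigenvalues of the form $r\delta+c(\rho\slash\mu)+c(\sigma\slash\nu)$; for $(\mu,\nu)\in\Lambda^{(0)}_{m,n}$ the only such value is $0$ (it forces $\rho=\mu$, $\sigma=\nu$), so $A_{m,n}$ annihilates $M^{(\mu,\nu)}_{m,n}$, whereas for $(\mu,\nu)\in\Lambda^{(r)}_{m,n}$ with $r\geqslant 1$ none of them vanishes. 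Since $P_{m,n}=\prod_{a\in\mathrm{spec}(A_{m,n})^{\times}}(1-\tfrac1a A_{m,n})$ is the projector onto $\ker A_{m,n}$, it acts as $1$ in the first case and as $0$ in the second. Either way the first claim follows.

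For the ``in particular'' part I set $\delta=N\in\mathbb{Z}$ with $N\geqslant m+n-1$, so $B_{m,n}(N)$ is semisimple by Theorem \ref{thm:wB_semisimple}, and I regard $V^{m,n}$ as a $B_{m,n}(N)$-module via $b\cdot v=\mathfrak{b}(b)(v)$. Since $\mathfrak{b}$ is surjective this is the pull-back of the $C_{m,n}(N)$-module structure, so the bimodule decomposition \eqref{eq:decomposition_bimodule} gives $V^{m,n}\cong\bigoplus_{(\mu,\nu)\in\Lambda_{m,n}(N)}\big(M^{(\mu,\nu)}_{m,n}\big)^{\oplus d_{\mu\nu}}$ as a $B_{m,n}(N)$-module. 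By the first part, $\mathfrak{b}(P_{m,n})$ is the projection of $V^{m,n}$ onto $\bigoplus_{(\mu,\nu)\in\Lambda^{(0)}_{m,n}(N)}\big(M^{(\mu,\nu)}_{m,n}\big)^{\oplus d_{\mu\nu}}$ along the complementary isotypic components; but by \eqref{eq:decomposition_bimodule} the $(\mu,\nu)$-isotypic component of $V^{m,n}$ (as a $C_{m,n}(N)$-, equivalently $B_{m,n}(N)$-, module) coincides with the $GL(N)$-isotypic component $U^{(\mu,\nu)}\otimes M^{(\mu,\nu)}_{m,n}$, so this is exactly the projection of $V^{m,n}$ onto $V^{m,n}_{0}$ along $V^{m,n}_{1}$ supplied by Theorem \ref{thm:tensor_decomposition}, that is, $\scrP_{m,n}$ (recall \eqref{eq:traceless_projector}). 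As $C_{m,n}(N)\subseteq\mathrm{End}(V^{m,n})$ acts faithfully and both $\mathfrak{b}(P_{m,n})$ and $\scrP_{m,n}$ belong to $C_{m,n}(N)$ and induce the same endomorphism of $V^{m,n}$, we conclude $\mathfrak{b}(P_{m,n})=\scrP_{m,n}$.

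The main obstacle is not any single computation but the bookkeeping of the index sets: one has to make precise that the simple $B_{m,n}(\delta)$-modules killed by $J$ are exactly those labelled by $\Lambda^{(0)}_{m,n}$, and, in the specialisation, to track correctly which simple $B_{m,n}(N)$-modules actually occur in $V^{m,n}$ — only those labelled by $\Lambda_{m,n}(N)$, since $\mathfrak{b}$ fails to be injective when $N=m+n-1$ — so that the surviving labels are $\Lambda^{(0)}_{m,n}\cap\Lambda_{m,n}(N)=\Lambda^{(0)}_{m,n}(N)$.
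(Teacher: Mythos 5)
Your proof is correct and, in its second variant (the eigenvalue computation via $A_{m,n}$ on each simple module), follows exactly the route the paper indicates when it says the proof ``goes along the same lines as for Proposition~\ref{prop:wB_spec_A}.'' The first variant (identifying $(1-P_{m,n})B_{m,n}(\delta)=J$ and using that the $r=0$ cell modules are precisely those annihilated by $J$) is a valid alternative, and your treatment of the specialisation $\delta=N\geqslant m+n-1$ — pulling back the $C_{m,n}(N)$-module structure on $V^{m,n}$ along the surjection $\mathfrak{b}$ and matching isotypic components with Theorem~\ref{thm:tensor_decomposition} — is exactly the bookkeeping the paper has in mind.
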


\appendix
\section{Proofs}
\subsection{Proof of Lemma \ref{lem:operator_A}}\label{sec:proof_lemma_A}

        Consider the real form $U(N)$ of $GL(N)$ so that the space $V$ (as well as $V^{*}$) acquires a positive hermitian form (see Section \bref{sec:intro_applications}) which extends canonically to $V^{m,n}$. Given a basis $\{e_{i}\}$ of $V$ and the dual basis $\{e^{i}\}$ of $V^{*}$, one verifies that for any $T_{1},T_{2} \in V^{m,n}$ and for all $1\leqslant \sfa \leqslant m$ and $1^{\prime}\leqslant \hsfb \leqslant n^{\prime}$ one has
        \begin{equation}
            \big(T_{1},\tau_{\sfa\hsfb}(T_{2})\big) = \big(\tr{\sfa}{\hsfb}(T_{1}),\tr{\sfa}{\hsfb}(T_{2})\big),
        \end{equation}
        so $\tau_{\sfa\hsfb}$ is represented by a hermitian matrix. The same conclusion holds for $\scrA_{m,n}$ because
        \begin{equation}\label{eq:A_symmetric}
            \big(T_1,\scrA_{m,n}(T_2)\big) = \sum_{1\leqslant \sfa \leqslant m,\, 1^{\prime}\leqslant \hsfb \leqslant n^{\prime}} \big(\tr{\sfa}{\hsfb}(T_1),\tr{\sfa}{\hsfb}(T_2)\big),
        \end{equation}
        so the points (1) and (3) of the assertion follow.
        \vskip 2pt

        To prove the point (2) note that $V_{0}^{m,n} \subseteq \mathrm{ker}(\scrA_{m,n})$. For the opposite inclusion consider any $T\in V^{m,n}$ such that $\scrA_{m,n}(T) = 0$, and note that the right-hand-side of \eqref{eq:A_symmetric} with $T_{1} = T_{2} = T$ is zero only if vanishes each term thereof. The latter is the case only if $\tr{\sfa}{\hsfb}(T) = 0$ for all $1\leqslant \sfa \leqslant m$ and $1^{\prime}\leqslant \hsfb \leqslant n^{\prime}$.
        \vskip 2pt

        Finally, to prove that $\mathrm{Im}(\scrA_{m,n}) = V_{1}^{m,n}$ one readily notes that $\mathrm{Im}(\scrA_{m,n}) \subseteq V_{1}^{m,n}$. For the opposite inclusion note that $V_{1}^{m,n}$ is in the orthogonal complement of $V_{0}^{m,n}$ with respect to the hermitian form, recall \eqref{eq:intro_basis_dual}. Thus, $V_{1}^{m,n} \subseteq \mathrm{Im}(\scrA_{m,n})$ by the point (1).

\subsection{Proof of Lemma \ref{lem:estimate_multiplicities}}\label{sec:proof_lemma_estimate}

        The proof is carried out by means of the rational representations of $GL(N)$, recall \eqref{eq:branching_GLGL-GL}. For notations and details we refer the reader to \cite[pp. 65-67]{Koike_89}.
        \vskip 2pt
        
        Without loss of generality suppose $m \geqslant n$. Given $(\rho,\sigma) \in \mathcal{P}_{m,n}(N)$ one has the following decomposition formula for the product of irreducible characters of rational representations of $GL(N)$ \cite[Corollary 2.3.1]{Koike_89}:
        \begin{equation}\label{eq:decomposition_rational_Schur}
            s_{(\rho,\varnothing)}(x_{1},\ldots,x_{N})\cdot s_{(\varnothing,\sigma)}(x_{1},\ldots,x_{N}) = \sum_{\mu,\nu,\beta\in \mathcal{P}} c^{\rho}_{\mu\beta} c^{\sigma}_{\nu\beta}\, \tilde{\pi}_{N}\big([\mu,\nu]_{GL}\big)(x_{1},\ldots,x_{N}).
        \end{equation}
        Here $\tilde{\pi}_{N}$ is the specialisation homomorphism which maps universal characters $[\mu,\nu]_{GL}$ to irreducible characters of $GL(N)$ as described in \cite[Proposition 2.2]{Koike_89}. In particular, if $\ell(\mu) + \ell(\nu) \leqslant N$ then $\tilde{\pi}_{N}\big([\mu,\nu]_{GL}\big) = s_{(\mu,\nu)}$. Otherwise, when $\ell(\mu) + \ell(\nu) > N$, specialisation is determined via the rule given in \cite[p. 67]{Koike_89}.
        \vskip 2pt
        
        To prove that the inequality \eqref{eq:estimate_multiplicities} saturates for $N \geqslant m + n - 1$ let us show that in this case the multiplicity of $U^{(\mu,\nu)}$ in $U^{(\rho,\varnothing)}\otimes U^{(\varnothing,\sigma)}\big\downarrow{}_{GL(N)}$ equals the right-hand-side of \eqref{eq:estimate_multiplicities}. Note that the coefficient on the right-hand side of \eqref{eq:decomposition_rational_Schur} is non-zero only if $\mu\subseteq \rho$ and $\nu\subseteq \sigma$ \cite{Fulton_YT}. If $\ell(\rho) + \ell(\sigma) \leqslant N$, then one has $\ell(\mu) + \ell(\nu) \leqslant N$ for all non-zero coefficients on the right-hand side of \eqref{eq:decomposition_rational_Schur}. In the complementary case $\ell(\rho) + \ell(\sigma) > N$ one has $\ell(\mu) + \ell(\nu) > N$ only for $(\rho) = (\mu) = (1_{m})$ and $(\sigma) = (\nu) = (1_n)$ when $N = m + n - 1$. By applying the rule given in \cite[p. 67]{Koike_89} one has $\tilde{\pi}_{N}\big([\mu,\nu]_{GL}\big) = 0$, so \eqref{eq:decomposition_rational_Schur} reduces to
        \begin{equation}
            s_{(\rho,\varnothing)}(x_{1},\ldots,x_{N})\cdot s_{(\varnothing,\sigma)}(x_{1},\ldots,x_{N}) = \sum_{(\mu,\nu) \in \Lambda_{m,n}(N),\,\beta\in \mathcal{P}} c^{\rho}_{\mu\beta} c^{\sigma}_{\nu\beta}\, s_{(\mu,\nu)}(x_{1},\ldots,x_{N}).
        \end{equation} 
        \vskip 2pt

        The case $n = 1$, $m \geqslant 2$ and $N \leqslant m - 1$ follows along the same lines: one has $\ell(\mu) + \ell(\nu) > N$ only if $\ell(\mu) = N$ and $\sigma = (1)$, in which case the rule given in \cite[p. 67]{Koike_89} gives $\tilde{\pi}_{N}\big([\mu,\nu]_{GL}\big) = 0$. The case $m = 1$, $n \geqslant 2$ and $N \leqslant n - 1$ follows by interchanging $V$ and $V^{*}$.
        \vskip 2pt

        Finally, for $N = 1$ and $m,n\geqslant 2$ one has the only possibility $\rho = (m)$ and $\sigma = (n)$. For any $(\mu,\nu) \in \Lambda_{m,n}(1)$ among the two partitions at least one is necessarily empty, so one has $\mu = (m - r)$, $\nu = \varnothing$ or $\mu = \varnothing$, $\nu = (n - r)$ for some non-negative integer $r$. Without loss of generality let $m\geqslant n$, so the right-hand side of \eqref{eq:estimate_multiplicities} is non-zero only if $\mu = (m-n)$, $\nu = \varnothing$. In this case $\beta = (n)$, so the right-hand side of \eqref{eq:estimate_multiplicities} equals to $1$. To check that $c^{\mu\nu}_{\rho\sigma}(1) = 1$ recall \eqref{eq:multiplicity_LR_coef} and note that $c^{\lambda}_{\rho\bar{\sigma}} \neq 0$ implies the only possibility $\lambda = (m)$, in which case $\mathfrak{s}^{-1}[(m),n] = (\mu,\nu)$.
        \vskip 2pt

        For the resting cases $m,n\geqslant 2$ and $2 \leqslant N \leqslant m+n-2$ let us show that one can find pairs of partitions $(\rho,\sigma) \in \mathcal{P}_{m,n}(N)$ and $(\mu,\nu)\in \Lambda_{m,n}(N)$ such that $c^{\rho}_{\mu\beta}c^{\sigma}_{\nu\beta} \neq 0$ for $\beta = (1)$ while $c^{\mu\nu}_{\rho\sigma}(N) = 0$. One has the following three cases where the latter condition boils down to $c^{\mu + \bar{\nu}}_{\rho\bar{\sigma}} = 0$ (recall \eqref{eq:map_staircase} and \eqref{eq:multiplicity_LR_coef}).
        \vskip 2pt
        \begin{itemize}
            \item[1)] Let $n \leqslant N$, then take $\rho = (m+n-N-1,1_{N-n+1})$, $\sigma = (1_{n})$ and $\mu = (m+n-N - 1,1_{N-n})$, $\nu = (1_{n-1})$. One has $\mu + \bar{\nu} = (m+n-N,2_{N-n})$, so $c^{\mu+\bar{\nu}}_{\rho,\bar{\sigma}} = 0$ since $\rho \not\subseteq \mu+\bar{\nu}$.
            \item[2)] Let $n = k(N-1) + 1$ for some integer $k \geqslant 1$. Then take $\rho = (m-1,1)$, $\sigma = (k_{N-1},1)$ and $\mu = (m-1)$, $\nu = (k_{N-1})$. One has $\mu + \bar{\nu} = (k + m - 1)$, so $c^{\mu+\bar{\nu}}_{\rho,\bar{\sigma}} = 0$ since $\rho \not\subseteq \mu+\bar{\nu}$.
            \item[3)] Let $n = k(N-1) + 1 + t$ for some non-negative integers $k \geqslant 1$ and $1\leqslant t < N-1$ (in this case $N \geqslant 3$). Take $\rho = (m-1,1)$, $\sigma = \big((k+1)_{t},k_{N-t-1},1\big)$ and $\mu = (m-1)$, $\nu = \big((k+1)_{t},k_{N-t-1}\big)$. One has $\mu + \bar{\nu} = (m + k,1_{N-t-1})$, $\bar{\sigma} = (k,1_{N-t-1})$, and thus $c^{\mu+\bar{\nu}}_{\rho,\bar{\sigma}} = 0$. Indeed, the only skew tableau of shape $(\mu+\bar{\nu})\slash \bar{\sigma}$ and of content $\rho$ is not a Littlewood-Richardson tableau \cite{Fulton_YT}.
        \end{itemize}

\subsection{Proof of Theorem \ref{thm:Brauer_homomorphism}}\label{app:proof_wB_hom}

        The homomorphism \eqref{eq:hom_Br} is surjective, and is also injective if $N\geqslant m+n$ \cite{BCHLLS}. To prove necessity of the latter condition, for $N < m+n$ it suffices to find one non-zero element of $B_{m,n}(N)$ which is mapped to $0\in C_{m,n}(N)$. Since \eqref{eq:hom_Br} is surjective consider $Q_{m,n} \in B_{m,n}(N)$ such that $\mathfrak{b}(Q_{m,n}) = \scrP_{m,n}$. By recalling \eqref{eq:traceless_projector_X}, one can take $Q_{m,n} = 1 + X_{m,n}$ with $X_{m,n}\in J$.
        \vskip 2pt
        
        Take $(\rho,\sigma) \in \mathcal{P}_{m,n}(N)$ such that $\ell(\rho) + \ell(\sigma) > N$, and consider the element $q = Q_{m,n}\, Z^{(\rho)}\otimes Z^{(\sigma)} \in B_{m,n}(N)$ (recall \eqref{eq:otimes_SnSn}). One has $\mathfrak{b}(q) = 0$ since $U^{(\rho,\sigma)} = \{0\}$ the traceless projection of $U^{(\rho,\varnothing)}\otimes U^{(\varnothing,\sigma)}$. To see that $q \neq 0$ recall that $J\subset B_{m,n}(N)$ is an ideal, that $Z^{(\rho)}\otimes Z^{(\sigma)} \in \mathbb{C}[\Sn{m}\times\Sn{n}]$ and that $J \cap \mathbb{C}[\Sn{m}\times\Sn{n}] = \{0\}$.

\subsection{Proof of Proposition \ref{prop:wB_spec_A}}\label{app:proof_wB_spec_A}
        Since the algebra $B_{m,n}(\delta)$ is semisimple by Theorem \bref{thm:wB_semisimple}, the left regular module decomposes as a direct sum of simple $B_{m,n}(\delta)$-modules. By \cite[Theorem 2.7]{CDDM} $B_{m,n}(\delta)$ is cellular, with the equivalence classes of simple $B_{m,n}(\delta)$-modules represented by cell modules $\Delta^{(\mu,\nu)}_{m,n}$ for all $(\mu,\nu) \in \Lambda_{m,n}$ (see \cite{CDDM} and references therein). By \cite[Theorem 6.1]{CDDM} the restriction of a cell module $\Delta^{(\mu,\nu)}_{m,n}$ with $(\mu,\nu) \in \Lambda^{(r)}_{m,n}$ to $\mathbb{C}[\Sn{m}\times \Sn{n}]$ decomposes as a direct sum of $L^{(\rho)}\otimes L^{(\sigma)}$ with all $(\rho,\sigma) \in \mathcal{P}_{m,n}$ such that $c^{\rho}_{\mu\beta}c^{\sigma}_{\nu\beta} \neq 0$ for some partition $\beta$. In this case one necessarily has $\beta \subseteq \rho\cap \sigma$ by the properties of the Littlewood-Richardson coefficients, see \cite{Fulton_YT}. Diagonalisability of $A_{m,n}$ together with the set of eigenvalues follows by virtue of \cite[Lemma 4.1]{CDDM}.
        \vskip 2pt

        It is left to prove that $r \neq 0$ implies $a \neq 0$. The latter implication is straightforward for $\delta \in \mathbb{C}\backslash \mathbb{Z}$, and is a matter of a simple check for $\delta = 0$. For any integer $\delta = N \geqslant 1$ the action of $B_{m,n}(N)$ in $V^{m,n}$ generates the centraliser algebra $C_{m,n}(N)$. By assuming $N \geqslant m + n - 1$, let us show that for any $r \in \{1,\ldots,\min(m,n)\}$ each element \eqref{eq:eigenvalues_A_wB} equals the element \eqref{eq:eigenvalue} with the same $(\mu,\nu) \in \Lambda^{(r)}_{m,n}(N)$ and $(\rho,\sigma)\in \mathcal{P}_{m,n}(N)$, so one has $a>0$ by Theorem \bref{thm:eigenvalues}. Indeed, note that in this case: {\it (i)} $\Lambda^{(r)}_{m,n}(N) = \Lambda^{(r)}_{m,n}$ because the only pair $(\mu,\nu)\in \Lambda_{m,n}$ such that $\ell(\mu) + \ell(\nu) > \ell$ is $\big((1_{m}),(1_{n})\big) \in \Lambda^{(0)}_{m,n}$, and {\it (ii)} $\mathcal{P}_{m,n}(N) = \mathcal{P}_{m,n}$ because $m,n\geqslant 1$ so $N \geqslant m$ and $N \geqslant n$. Finally, the condition  $\sum_{\beta} c^{\rho}_{\beta\mu} c^{\sigma}_{\beta\nu} \neq 0$ is equivalent to $c^{\mu\nu}_{\rho\sigma} \neq 0$ by Lemma \bref{lem:estimate_multiplicities}.
        \vskip 2pt

        The case of negative integers $\delta = N \leqslant -(m+n-1)$ follows from the previous case via passing to dual partitions.
        
\newpage

%\bibliographystyle{bibsty}
%\bibliography{biblio}

\end{document}